\theoremstyle{plain}
\newtheorem{Thm}{Theorem}[section]
\newtheorem{Prop}[Thm]{Proposition}
\newtheorem{Cor}[Thm]{Corollary}
\newtheorem{Lem}[Thm]{Lemma}
\theoremstyle{definition}
\newtheorem{Defn}[Thm]{Definition}
\newtheorem{Expl}[Thm]{Example}
\theoremstyle{remark}
\newtheorem{Rem}[Thm]{Remark}
\numberwithin{equation}{section}
\title{On multi-pointed non-commutative deformations and Calabi-Yau threefolds}
\author{Yujiro Kawamata}
\begin{document}
\maketitle


\begin{abstract}
We will develop a theory of multi-pointed non-commutative deformations of a simple collection 
in an abelian category, and construct relative exceptional objects and relative spherical objects in some cases.
This is inspired by a work by Donovan and Wemyss.
\end{abstract}

\section{introduction}

We shall develop a theory of multi-pointed non-commutative deformations of a simple collection 
in an abelian category.
A simple collection is a finite set of objects such that each object has no endomorphisms except dilations and 
there are no nonzero homomorphisms between objects.
The commutative deformations of several objects are just the direct product of deformations of each objects, 
but there are interactions of objects in the case of non-commutative deformations.
We will prove that any iterated non-trivial extensions between the given objects 
yield a non-commutative deformation in the case of a simple collection, 
and we obtain a versal deformation in this way.
As applications, we will construct relative exceptional objects and relative spherical objects in some special cases.

The deformation theory has non-commutative versions in two directions, non-commutative fibers 
and non-commutative base.
We consider the latter case.
The point is that there are more non-commutative deformations of commutative objects than the commutative deformations
as proved in a paper by Donovan and Wemyss \cite{Donovan-Wemyss1}. 
They discovered an interesting application of 
the theory of non-commutative deformations to the theory of three dimensional algebraic varieties.
They provided a better understanding of the 
mysterious analytic neighborhood of a flopping curve on 
a threefold by investigating non-commutative deformations of the flopping curve.
The invariants defined by them are found to be related to Gopakumar-Vafa invariants and
Donaldson-Thomas invariants (\cite{Toda-GV}).
This paper is motivated by their works.
Moreover we consider systematically multi-pointed deformations, i.e., non-commutative deformations of 
several objects.

The theory of deformations over a non-commutative base is developed by Laudal \cite{Laudal}.
The definition of non-commutative deformations is very similar to the commutative deformations, but 
only the parameter algebra is not necessarily commutative.
A non-commutative Artin semi-local algebra with nilpotent Jacobson radical is not necessarily 
a direct product of Artin local algebras.
By this reason, we need to consider several maximal ideals simultaneously.

The extensions of a deformation and the obstruction theory is similarly described by 
cohomology groups as in \cite{Schlessinger}, 
and there exists a versal family of non-commutative deformations under some mild conditions. 
But there are much more non-commutative deformations than the commutative ones.
For example, unobstructed deformations in 
the commutative case can be obstructed in the non-commutative sense.

Let $\Bbbk$ be a field, $\mathcal{A}$ a $\Bbbk$-linear abelian category, $r$ a positive integer, 
and $F_i$ ($1 \le i \le r$) objects in $\mathcal{A}$.
The set $\{F_i\}$ is said to be a {\em simple collection} if $\dim \text{Hom}(F_i,F_j) = \delta_{ij}$. 
We define non-commutative deformations of the collection $\{F_i\}$ 
as iterated non-trivial mutual extensions of the $F_i$.
We will prove that the non-commutative deformations behave very nicely under the condition of simplicity. 

In \S 2, we define a multi-pointed non-commutative deformation of a collection of objects.
In \S 3, we treat non-commutative deformations of objects as their iterated extensions.
First theorem states that, for any two sequences of iterated non-trivial extensions of a simple collection, 
there exists a third sequence of iterated non-trivial extensions
which dominates others (Theorem~\ref{dominates}).
In particular, if the extensions terminate, then there exists a unique versal deformation.

In the second theorem in \S 4, we prove the converse statement that arbitrary sequence of 
iterated non-trivial extensions of a simple collection 
can be regarded as a non-commutative deformation.
The point is that the base ring of the deformation is recovered as the ring of endomorphisms.
For this purpose, we consider a tower of universal extensions of a simple collection,
and we prove the flatness of the extension over the ring of endomorphisms.
In this way we construct a versal multi-pointed non-commutative deformation (Theorem~\ref{flat}).

As applications we construct relative multi-pointed exceptional objects and relative multi-pointed spherical objects 
in some special cases in \S 5 and \S 6.
A relative multi-pointed exceptional object yields a semi-orthogonal decomposition of a triangulated category, and 
a relative multi-pointed spherical object yields a twist functor.
In the case of a local Calabi-Yau threefold, we will prove that a versal non-commutative deformation of a
simple collection becomes a relative spherical object if the deformations stop after a finite number of steps.

We will use the abbreviation \lq\lq NC'' for non-commutative, or more precisely,  
not necessarily commutative in the rest of the paper.

The author would like to thank Alexei Bondal, Yukinobu Toda and Michael Wemyss 
for useful discussions on the subject.
Yukinobu Toda made a remark on the universality of the deformations, and 
Michael Wemyss pointed out that one needs to assume the rigidity of the curves in Example 6.7.
This work was partly done while the author stayed at National Taiwan University.
The author would like to thank Professor Jungkai Chen and 
National Center for Theoretical Sciences of Taiwan of the hospitality and 
excellent working condition.
I would like to thank the anonymous referees for careful reading of the first draft of this paper 
and many suggestions for improvements.

This work is partly supported by Grant-in-Aid for Scientific Research (A) 16H02141.

\section{definition of $r$-pointed NC deformations}

We give a definition of multi-pointed NC deformations.
It is modified from \cite{Laudal} in order to adapt to our situation of deformations of sheaves.
It seems that our treatment is also different from \cite{Donovan-Wemyss2},
because our definition works well only in the case of simple collections.
See also \cite{BB}.

We would like to consider infinitesimal deformations of $r$ coherent sheaves on a variety 
at the same time for a positive integer $r$.
If we consider only commutative deformations of these sheaves, 
then they deform independently.
But NC deformations reflect interactions among the sheaves.

First we define the category of base rings for deformations according to \cite{Laudal}:

\begin{Defn}
Let $\Bbbk$ be a base field, let $r$ be a positive integer, and let $\Bbbk^r$ be the direct product ring.
An {\em $r$-pointed $\Bbbk$-algebra} $R$ is an associative ring endowed 
with $\Bbbk$-algebra homomorphisms
\[
\Bbbk^r \to R \to \Bbbk^r
\]
whose composition is the identity homomorphism. 
\end{Defn}

Let $e_i$ be the idempotents of $R$ corresponding to the vectors 
$(0, \dots,0,1,0,\dots,0) \in \Bbbk^r$ for $1 \le i \le r$, 
where $1$ is at the $i$-th place.
We have $\sum_{i=1}^r e_i = 1$, $e_ie_i=e_i$ and $e_ie_j=0$ for $i \ne j$.
Let $R_{ij} = e_iRe_j \subset R$.
Then $R = \bigoplus_{i,j=1}^r R_{ij}$, and $R$ can be considered as a matrix algebra $(R_{ij})$ such that
the $R_{ij}$ are $\Bbbk$-vector spaces and the multiplication in $R$ is given by 
$\Bbbk$-linear homomorphisms 
$R_{ij} \otimes_{\Bbbk} R_{jk} \to R_{ik}$.

Let $M_i$ be the kernels of the surjective algebra homomorphisms 
$R \to \Bbbk^r \to \Bbbk$ for $1 \le i \le r$, where the 
second homomorphisms are $i$-th projections.
These are maximal ideals and the $R/M_i$ are simple two-sided $R$-modules.
Let $M = \bigcap M_i$. 
We have $M = \text{Ker}(R \to \Bbbk^r)$ and $R/M = \bigoplus R/M_i$ as $R$-modules.

\begin{Defn}
We define $(\text{Art}_r)$ to be the category of $r$-pointed $\Bbbk$-algebras $R$ 
such that $\dim_{\Bbbk} R < \infty$ and 
$M$ is nilpotent.
\end{Defn}

The second condition is independent.
For example, let $R = \Bbbk \oplus \Bbbk$ be a commutative $\Bbbk$-algebra 
with $r=1$ and $M = 0 \oplus \Bbbk$.
Then $M$ is not nilpotent.

If $R \in (\text{Art}_r)$, then any simple right $R$-module is isomorphic to some $R/M_i$.
Indeed, let $N = R/I$ be a simple module for a right ideal $I$.
Since $M$ is nilpotent, there is an integer $i$ such that $M^i \not\subset I$ but $M^{i+1} \subset I$.
Then there is an element $n \in N$ such that $nM = 0$ in $N$.
Then $\text{Ann}(n)/M$ is a right ideal of $R/M \cong \Bbbk^r$, that is one of the $M_i/M$.  

\begin{Defn}
Let $\mathcal{A}$ be a $\Bbbk$-linear abelian category.
An object $F$ of $\mathcal{A}$ has a left $R$-module structure if 
there is a $\Bbbk$-linear map $R \to \text{End}(F)$.
For a right $R$-module $N$ with presentation $R^{(I)} \to R^{(J)} \to N \to 0$, we define a tensor product 
$N \otimes_R F$ 
as the cokernel of $F^{(I)} \to F^{(J)}$.
$F$ is said to be {\em flat} if the exactness of a sequence $0 \to N_1 \to N_2 \to N_3 \to 0$ 
of right $R$-modules implies
the exactness of a sequence $0 \to N_1 \otimes_R F \to N_2 \otimes_R F \to N_3 \otimes_R F \to 0$. 

A set of objects $\{F_i\}_{i=1}^r$ in $\mathcal{A}$ is said to be a {\em collection} in this paper.
Let $F = \bigoplus F_i$. 
An {\em $r$-pointed NC deformation} of the collection $\{F_i\}$ over 
$R \in (\text{Art}_r)$ is a pair $(F_R,\phi)$ consisting of 
an object $F_R$ of $\mathcal{A}$ which has a flat left $R$-module structure
and an isomorphism $\phi: R/M \otimes_R F_R \cong F$ inducing isomorphisms 
$R/M_i \otimes_R F_R \cong F_i$ for all $i$.
The {\em $r$-pointed NC deformation functor} $\text{Def}_{\{F_i\}}: (\text{Art}_r) \to (Set)$ of $\{F_i\}$ 
is defined to be a covariant functor which sends $R$ to the set of isomorphism classes of $r$-pointed 
NC deformations of $\{F_i\}$ over $R$.
If $R \to R'$ is a $\Bbbk$-algebra homomorphism, then $r$-pointed deformations $F_R$ over $R$ 
are mapped to $r$-pointed deformations $R' \otimes_R F_R$ over $R'$.
\end{Defn}

For example, we can take $\mathcal{A} = (\text{coh}(X))$, 
the category of coherent sheaves on an algebraic variety $X$
defined over $\Bbbk$.

We give a definition of a versal deformation:

\begin{Defn}
A sequence of $r$-pointed NC deformations $\{(F^{(n)}, \phi^{(n)})\}$ of $\{F_i\}$ 
over rings $(R^{(n)},M^{(n)}) \in (\text{Art}_r)$
with surjective homomorphisms of $r$-pointed algebras $f_{n,n+1}: R^{(n+1)} \to R^{(n)}$ such that 
$(F^{(n)},\phi^{(n)}) \cong R^{(n)} \otimes_{R^{(n+1)}} (F^{(n+1)},\phi^{(n+1)})$ 
is called a {\em versal deformation} if the following conditions are satisfied:

\begin{enumerate}

\item For any $r$-pointed deformation $(F_{R'},\phi')$ of $\{F_i\}$ over a ring $R' \in (\text{Art}_r)$, 
there exist a positive integer $n$ and a ring homomorphism of $r$-pointed algebras 
$g: R^{(n)} \to R'$ such that 
$(F_{R'},\phi') \cong R' \otimes_{R^{(n)}} (F^{(n)},\phi^{(n)})$. 

\item There exists a positive integer $n$ such that the natural homomorphisms 
$M^{(n')}/(M^{(n')})^2 \to M^{(n)}/(M^{(n)})^2$ are bijective for all $n' \ge n$, and the induced
homomorphism $dg: M^{(n)}/(M^{(n)})^2 \to M_{R'}/M_{R'}^2$ is uniquely determined,
i.e., $dg = dg'$ for any other choice $g'$ satisfying (1).

\end{enumerate}
\end{Defn}

It follows from the condition (2) that, for each $m$, there exists $n(m)$ 
such that the natural homomorphisms
$(M^{(n')})^m/(M^{(n')})^{m+1} \to (M^{(n(m))})^m/(M^{(n(m))})^{m+1}$ are bijective for all $n' \ge n(m)$.
Indeed, there are surjective homomorphisms
$(M^{(n)}/(M^{(n)})^2)^{\otimes m} \to (M^{(n')})^m/(M^{(n')})^{m+1} \to (M^{(n)})^m/(M^{(n)})^{m+1}$
for any $n' \ge n$, so that $\dim (M^{(n')})^m/(M^{(n')})^{m+1}$ stabilize for large $n'$.

We have uniqueness of a versal deformation:

\begin{Prop}
Let $\{(F^{(n)}_j, \phi^{(n)}_j)\}$ ($j=1,2$) be
versal $r$-pointed NC deformations of $F$ over rings $(R^{(n)}_j,M^{(n)}_j) \in (\text{Art}_r)$, and
let $\hat{R}_j = \varprojlim R^{(n)}_j$ be the inverse limits.
Then there is an isomorphism $f: \hat{R}_1 \to \hat{R}_2$ which induces an
isomorphism $\varprojlim F^{(n)}_1 \cong \hat{R}_1 \hat{\otimes}_{\hat{R}_2} \varprojlim F^{(n)}_2$. 
\end{Prop}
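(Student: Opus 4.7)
The plan is a symmetric versality argument in the spirit of Schlessinger's uniqueness theorem for versal hulls, adapted to the pro-Artinian setting. For each $n$, condition~(1) of versality of family~1 applied to the deformation $(F^{(n)}_2, \phi^{(n)}_2)$ yields an integer $m(n)$ and a $k$-algebra homomorphism $g_n: R^{(m(n))}_1 \to R^{(n)}_2$ realizing $(F^{(n)}_2, \phi^{(n)}_2)$ as the pullback of $(F^{(m(n))}_1, \phi^{(m(n))}_1)$. I would inductively arrange these so that for each $n$ the square
\[
\begin{CD}
R^{(m(n+1))}_1 @>g_{n+1}>> R^{(n+1)}_2 \\
@VVV @VVV \\
R^{(m(n))}_1 @>g_n>> R^{(n)}_2
\end{CD}
\]
commutes; passage to the inverse limit then yields a ring map $g: \hat R_1 \to \hat R_2$ pulling $\varprojlim F^{(n)}_2$ back to $\varprojlim F^{(n)}_1$. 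Symmetrically I would construct $h: \hat R_2 \to \hat R_1$.

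I expect the main obstacle to be this inductive compatibility step. A fresh invocation of condition~(1) produces some $g'_{n+1}$ that need not make the square commute; the two resulting maps $R^{(m(n+1))}_1 \to R^{(n)}_2$ realize the same deformation $F^{(n)}_2$, but only agree as deformations, not as ring homomorphisms. I would lift this ``gauge'' to an automorphism of $R^{(n+1)}_2$ coming from an automorphism of the deformation $F^{(n+1)}_2$ that restricts to the identity modulo the kernel of $R^{(n+1)}_2 \to R^{(n)}_2$, and use it to modify $g'_{n+1}$ into the required $g_{n+1}$. This is precisely where the versality (as opposed to universality) of the family has to be exploited carefully.

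Next I would show that $g$ and $h$ are mutually inverse by combining condition~(2) with a Nakayama-type argument. By condition~(2), for $n$ large the groups $M^{(n)}_j/(M^{(n)}_j)^2$ stabilize to $M_{\hat R_j}/M_{\hat R_j}^2$, and the tangent map induced by any realizer is uniquely determined. The composition $h \circ g: \hat R_1 \to \hat R_1$ realizes the versal deformation of family~1 via pullback, exactly as the identity does; uniqueness of the tangent map then forces $d(h \circ g) = \mathrm{id}$, and symmetrically $d(g \circ h) = \mathrm{id}$.

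Finally I would promote these tangent-level identities to honest isomorphisms. At each finite level, the endomorphism induced on $R^{(n)}_1$ fixes $k^r$ and is the identity modulo $(M^{(n)}_1)^2$; nilpotence of $M^{(n)}_1$ together with a Nakayama argument gives surjectivity on $M^{(n)}_1$, hence on $R^{(n)}_1$, and finite-dimensionality over $k$ upgrades surjectivity to bijectivity. Passing to the inverse limit, $h \circ g$ and $g \circ h$ are both automorphisms, so $g$ itself is an isomorphism, which by construction identifies $\varprojlim F^{(n)}_1$ with $\hat R_1 \hat\otimes_{\hat R_2} \varprojlim F^{(n)}_2$.
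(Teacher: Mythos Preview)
Your approach is essentially the same as the paper's: obtain maps in both directions by versality, use condition~(2) to force the composites to induce the identity on $M/M^2$, and then use nilpotence plus finite-dimensionality over $k$ to promote this to bijectivity on each graded piece $M^m/M^{m+1}$, hence on the whole ring. The paper's proof is simply terser --- it asserts the existence of $f$ and $g$ on the pro-objects in one line and does not unwind the compatibility issue you raise; your more careful treatment of that step is reasonable, though the gauge-automorphism fix you sketch is exactly the sort of thing the paper is implicitly absorbing into the phrase ``by the versality''.
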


\begin{proof}
By the versality, we have ring homomorphisms $f: \hat{R}_1 \to \hat{R}_2$ and $g: \hat{R}_2 \to \hat{R}_1$
which induces isomorphisms $\varprojlim F^{(n)}_1 \cong \hat{R}_1 \hat{\otimes}_{\hat{R}_2} \varprojlim F^{(n)}_2$
and $\varprojlim F^{(n)}_2 \cong \hat{R}_2 \hat{\otimes}_{\hat{R}_1} \varprojlim F^{(n)}_1$.
It is enough to prove that $g \circ f$ and $f \circ g$ are bijective.
We know that $g \circ f$ induces the identity on $M_{\hat R_1}/M_{\hat R_1}^2$.
Therefore it follows that it induces surjections on $M_{\hat R_1}^m/M_{\hat R_1}^{m+1}$ for all $m$.
Since $M_{\hat R_1}/M_{\hat R_1}^2$ is finite dimensional as a $\Bbbk$-module, 
so are the $M_{\hat R_1}^m/M_{\hat R_1}^{m+1}$.
Therefore they are also injective.
Hence $g \circ f$ is bijective.
$f \circ g$ is bijective similarly.
\end{proof}

\begin{Rem}
(1) There is a hull $\hat R$ for the functor $\text{Def}_{\{F_i\}}$ under suitable conditions (\cite{Laudal}).
If $r=1$, then the maximal commutative quotient $(\hat R)_{\text{ab}}$ coincides with the hull of the 
usual commutative deformation functor.
$\hat R$ is determined by $\text{Ext}^1(F,F)$ and the Massey products
$(\text{Ext}^1(F,F))^{\otimes m} \to \text{Ext}^2(F,F)$ for $m \ge 2$ (\cite{Laudal2}).
We will not use these facts.

(2) NC deformations exist only over local base by definition.
But Kapranov and Toda constructed globalization of NC deformations in the commutative direction 
(\cite{Toda-SLC}).
\end{Rem}

\section{iterated non-trivial $r$-pointed extensions}

We shall define the notion of a simple collection and consider its iterated non-trivial multi-pointed extensions.
A simple collection behaves well under iterated multi-pointed extensions. 

\begin{Defn}
Let $\mathcal{A}$ be a $\Bbbk$-linear abelian category, and let $r$ be a positive integer.
A collection $\{F_i\}_{i=1}^r$ in $\mathcal{A}$ is said to be a {\em simple collection} 
if $\dim \text{Hom}(F_i,F_j) = \delta_{ij}$.
\end{Defn}

If $\mathcal{A}$ is a category of coherent sheaves on a variety, 
then a member of a simple collection is usually called a {\em simple sheaf}.
This is the origin of the term \lq\lq simple''.
But we note that a simple sheaf is not necessary a simple object in the abelian category of sheaves. 

We consider iterated non-trivial $r$-pointed extensions of a simple collection $\{F_i\}_{i=1}^r$.

\begin{Defn}
A sequence of {\em iterated non-trivial $r$-pointed extensions} of the simple collection $\{F_i\}_{i=1}^r$ 
is a sequence of objects
$\{G^n\}_{0 \le n \le N}$ for a positive integer $N$ with decompositions $G^n = \bigoplus_{i=1}^r G^n_i$ such that
$G^0_i = F_i$, and for each $0 \le n < N$, there are $i = i(n)$ and $j = j(n)$ such that  
\[
0 \to F_j \to G^{n+1}_i \to G^n_i \to 0
\]
is an extension corresponding to a non-zero element of $\text{Ext}^1(G^n_i,F_j)$, and 
$G^{n+1}_{i'} = G^n_{i'}$ for $i' \ne i$.
\end{Defn}

We prove that any two iterated non-trivial $r$-pointed extensions are dominated by a third:

\begin{Thm}\label{dominates}
Let $\{F_i\}$ be a simple collection, and let $G$ be an object.
Let $0 \to F_{i_j} \to G_j \to G \to 0$ for $j=0,1$ be two non-trivial extensions
which are not isomorphic.
Then there exists a common object $H$ with non-trivial extensions  
$0 \to F_{i_{1-j}} \to H \to G_j \to 0$.
\end{Thm}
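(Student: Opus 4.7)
The plan is to take $H$ to be the fiber product $G_0 \times_G G_1$, i.e.\ the pullback in $\mathcal{A}$ of the two given epimorphisms $\pi_j: G_j \to G$. Since the pullback of an epimorphism along any morphism is an epimorphism in an abelian category, $H$ fits into two short exact sequences
\[
0 \to F_{i_1} \to H \to G_0 \to 0, \qquad 0 \to F_{i_0} \to H \to G_1 \to 0,
\]
with each kernel identified with the kernel of the opposite projection (so $\ker(H \to G_0) = \ker(\pi_1) = F_{i_1}$, and symmetrically on the other side). These are the extensions the theorem demands, so the only remaining task is to show that each is non-split.

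Suppose, for contradiction, that the first splits, and let $s: G_0 \to H$ be a section. Setting $\alpha := \text{pr}_{G_1} \circ s: G_0 \to G_1$, the defining property of the pullback gives $\pi_1 \circ \alpha = \pi_0$. Restricting $\alpha$ to $F_{i_0} \subset G_0$ and noting $\pi_0|_{F_{i_0}} = 0$ forces the image of $\alpha|_{F_{i_0}}$ to lie in $\ker \pi_1 = F_{i_1}$, so $\alpha$ induces a morphism $F_{i_0} \to F_{i_1}$. The simple collection hypothesis $\dim \text{Hom}(F_{i_0}, F_{i_1}) = \delta_{i_0 i_1}$ then forces two cases: either this induced morphism vanishes, or $i_0 = i_1$ and it is a nonzero scalar $c$. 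In the first case $\alpha$ factors through $\pi_0$ as $\alpha = \beta \circ \pi_0$ for some $\beta: G \to G_1$, whence $\pi_1 \circ \beta = \text{id}_G$ splits the second original extension — a contradiction. In the second case $\alpha$ furnishes a morphism of extensions equal to the scalar $c$ on $F_{i_0} = F_{i_1}$ and $\text{id}$ on $G$, exhibiting the two original extensions as equivalent up to the scalar automorphism $c$ of $F$, contradicting the non-isomorphism hypothesis. The other sequence is handled by interchanging the roles of $G_0$ and $G_1$.

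The only substantive step is the case analysis just described: the simple collection condition is just strong enough to trap $F_{i_0} \to F_{i_1}$ into exactly these two cases, one of which contradicts non-triviality of the given extensions and the other their non-isomorphism. I read \emph{not isomorphic} as equivalence of extensions up to automorphisms of the kernel, which is the natural notion here since each $F_i$ has only scalar endomorphisms.
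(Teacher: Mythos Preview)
Your proof is correct and essentially the same as the paper's: the paper also constructs $H$ as the fiber product (it remarks explicitly that $H = \ker(p_0 - p_1: G_0 \oplus G_1 \to G)$) and proves non-triviality via the identical two-case analysis on $\text{Hom}(F_{i_0},F_{i_1})$, only phrased through the long exact $\text{Ext}$-sequence $\text{Hom}(F_{i_{1-j}},F_{i_j}) \to \text{Ext}^1(G,F_{i_j}) \to \text{Ext}^1(G_{1-j},F_{i_j})$ rather than via an explicit splitting. Your reading of ``not isomorphic'' (equivalence up to scalar automorphisms of the kernel) matches the paper's usage.
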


\begin{proof}
Let $\xi_j \in \text{Ext}^1(G,F_{i_j})$ 
be non-zero elements corresponding to the given extensions.
We consider exact sequences
\[
\text{Hom}(F_{i_{1-j}},F_{i_j}) \to \text{Ext}^1(G,F_{i_j}) \to \text{Ext}^1(G_{1-j},F_{i_j}) 
\]
derived from $\xi_{1-j}$.
Let $\xi'_j \in \text{Ext}^1(G_{1-j},F_{i_j})$ be the images of $\xi_j$ by the second homomorphism.
We claim that $\xi'_j \ne 0$.  
Indeed if $i_0 \ne i_1$, then the first term vanishes, hence $\xi'_j \ne 0$.
If $i_0 = i_1$, then the image of the first homomorphism is generated by $\xi_{1-j}$, hence
the image of $\xi_j$ by the second homomorphism is non-zero because the two extensions are not isomorphic.

We have a commutative diagram
\[
\begin{CD}
@. @. 0 @. 0 @. \\
@. @. @VVV @VVV @. \\
@. @. F_{i_1} @>=>> F_{i_1} @. \\
@. @. @VVV @VVV @. \\
0 @>>> F_{i_0} @>>> H @>>> G_1 @>>> 0 \\
@. @V=VV @VVV @VVV @. \\
0 @>>> F_{i_0} @>>> G_0 @>>> G @>>> 0 \\
@. @. @VVV @VVV @. \\
@. @. 0 @. 0 @.
\end{CD}
\]
where the two horizontal short exact sequences correspond to $\xi'_0$ and $\xi_0$.
They are commutative by the construction of $\xi'_0$.
By 9-lemma, we obtain the two vertical short exact sequences, which correspond to 
$\xi'_1$ and $\xi_1$.
Therefore we have constructed a common non-trivial extension $H$.

$H$ can be directly constructed as the kernel of the morphism $p_0-p_1: G_0 \oplus G_1 \to G$ 
where the $p_i: G_0 \oplus G_1 \to G_i \to G$ are the given morphisms.
\end{proof}

The maximal iterated non-trivial $r$-pointed extension is unique if it exists:

\begin{Cor}
Let $\{G^m\}_{0 \le m \le M}$ and $\{H^n\}_{0 \le n \le N}$ 
be two sequences of iterated non-trivial $r$-pointed extensions 
of a simple collection $\{F_i\}$.
Assume that $\text{Ext}^1(G^M,F_i) = 0$ for all $i$.
Then $M \ge N$, and there exists a sequence of iterated non-trivial $r$-pointed extensions 
$\{H^n\}_{0 \le n \le M}$ extending
the given sequence such that $G^M \cong H^M$.
In particular, the maximal iterated non-trivial $r$-pointed extension is unique if it exists.
\end{Cor}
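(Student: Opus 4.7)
The plan is to prove a slightly strengthened statement by induction on $M$: for any object $K = \bigoplus_{i=1}^r K_i$ in place of $F = \bigoplus_i F_i$ and any two sequences of iterated non-trivial $r$-pointed extensions of $\{F_i\}$ that both start from $K$, say $\{K^m\}_{0 \le m \le M}$ with $\text{Ext}^1(K^M, F_i) = 0$ for all $i$ and $\{L^n\}_{0 \le n \le N}$, one has $M \ge N$ and the $L$-sequence extends to length $M$ ending at $K^M$. The original corollary is the case $K = F$. The reason to generalize is that the inductive step will recurse on tail sequences whose common starting point is an intermediate object such as $K^1$ or $L^1$, not $F$.

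The base case $M = 0$ is immediate: the vanishing of $\text{Ext}^1(K, F_i)$ rules out any non-trivial extension of a summand $K_i$ by any $F_j$, forcing $N = 0$. For the inductive step with $M \ge 1$ and $N \ge 1$, I would compare the two first steps: say $K \to K^1$ extends $K_{i_1}$ by $F_{j_1}$ via $0 \ne \xi \in \text{Ext}^1(K_{i_1}, F_{j_1})$, and $K \to L^1$ extends $K_{i_2}$ by $F_{j_2}$ via $0 \ne \eta \in \text{Ext}^1(K_{i_2}, F_{j_2})$. If $i_1 = i_2$, $j_1 = j_2$, and $\xi, \eta$ are proportional, then $K^1 \cong L^1$ and applying the inductive hypothesis to the tail sequences (of lengths $M-1$ and $N-1$ starting from $K^1$) finishes the job.

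Otherwise, I need to produce a common object $\hat K$ reached from $K$ in two non-trivial steps via either $K^1$ or $L^1$. If $i_1 \ne i_2$, the two extensions modify distinct summands and commute, so performing them in either order yields the same $\hat K$ directly. If $i_1 = i_2$, then Theorem~\ref{dominates} applied to the single summand $K_{i_1}$, with the two non-isomorphic non-trivial extensions $K^1_{i_1}$ and $L^1_{i_1}$, furnishes a common $\hat K_{i_1}$; setting $\hat K_i = K_i$ for $i \ne i_1$ gives a decomposed $\hat K$ with the two desired 2-step paths.

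With $\hat K$ in hand, I would apply the inductive hypothesis twice, each time at parameter $M' = M-1 < M$. The first application extends the length-$1$ sequence $K^1 \to \hat K$ to length $M-1$ ending at $K^M$, using the $K$-tail $K^1 \to \cdots \to K^M$; this produces a sequence $K^1 \to \hat K \to \cdots \to K^M$. Prepending $L^1 \to \hat K$, we obtain a length-$(M-1)$ sequence starting from $L^1$ whose terminal object $K^M$ still satisfies the Ext-vanishing. The second application then extends the length-$(N-1)$ tail $\{L^1, \ldots, L^N\}$ to length $M-1$ ending at $K^M$; prepending the initial step $K \to L^1$ yields the required length-$M$ extension of the original $L$-sequence. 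The delicate point of the whole argument is the construction of $\hat K$ in the subcase $i_1 = i_2$, which is precisely what Theorem~\ref{dominates} is designed for; the parallel case and the double recursion are then bookkeeping. The "In particular" clause follows at once: if both $\{G^m\}$ and $\{H^n\}$ terminate in the Ext-vanishing sense, applying the statement in both directions forces $M = N$ and $G^M \cong H^M$, since any further non-trivial step would contradict the vanishing at the terminal object.
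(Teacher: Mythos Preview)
Your proof is correct and rests on the same core idea as the paper's: use Theorem~\ref{dominates} to manufacture a common extension sitting above the first point of divergence, then thread it through the remainder of the sequences. The organization differs. You set things up as an induction on $M$ with a strengthened hypothesis allowing an arbitrary starting object $K$, and invoke that hypothesis twice (once to carry $\hat K$ up to $K^M$ along the $K$-tail, once to finish off the $L$-tail). The paper instead unrolls both of these recursions as explicit loops: an inner loop that ``bubbles'' the common extension $L^1, L^2, \ldots$ up the $G$-sequence until it matches some $G^{N'+k+1}$, and an outer loop that increases the index $N'$ at which the two sequences agree. Your recursive packaging is cleaner, and your separate handling of the case $i_1 \ne i_2$ (different summands, where the two extensions simply commute) is a small gain in clarity over the paper, which applies Theorem~\ref{dominates} uniformly to the whole object; conversely the paper's iteration makes the termination from $\text{Ext}^1(G^M,F_i)=0$ a bit more visible. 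One small point you leave implicit: when $M=1$ and the first steps differ, the existence of $\hat K$ already contradicts $\text{Ext}^1(K^1,F_i)=0$, so that sub-case is vacuous; this is what your first application of the inductive hypothesis (at $M'=0$) is silently detecting.
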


\begin{proof}
Let $N' \le \min\{M,N\}$ be the maximum number 
such that the sequences $\{G^m\}_{0 \le m \le N'}$ and $\{H^n\}_{0 \le n \le N'}$ 
are isomorphic as iterated non-trivial $r$-pointed extensions.
We will prove that there exists another sequence $\{(G')^m\}_{0 \le m \le M}$ such that 
$(G')^M \cong G^M$ and $N'' > N'$ for the maximum number $N''$ where
the sequences $\{(G')^m\}_{0 \le m \le N''}$ and $\{H^n\}_{0 \le n \le N''}$ 
are isomorphic.
Then we obtain our assertion by the induction.

We will obtain the new sequence $\{(G')^m\}_{0 \le m \le M}$ by replacing the extensions inductively.
Namely, let $L^1$ be a common extension of $G^{N'+1}$ and $H^{N'+1}$ of $G^{N'} \cong H^{N'}$ 
given by the theorem.
If $L^1 \cong G^{N'+2}$, then we replace $G^{N'+1}$ by $H^{N'+1}$ and leave other $G^m$'s unchanged.
Otherwise we take a common extension $L^2$ of $G^{N'+2}$ and $L^1$ over $G^{N'+1}$.
If $L^2 \cong G^{N'+3}$, then we replace $G^{N'+1}$ and $G^{N'+2}$ by $H^{N'+1}$ and $L^1$, respectively, 
and leave other $G^m$'s unchanged.
Otherwise we take a common extension $L^3$ of $G^{N'+3}$ and $L^2$ over $G^{N'+2}$.
Since $\text{Ext}^1(G^M,F_i) = 0$ for all $i$, this process stops after finitely many repetition, hence our result.
\end{proof}

\begin{Rem}
(1) The above theorem is the reason why our theory works well only for simple collections.

(2) The sheaf $G^M$ in the above corollary will be proved to be a versal $r$-pointed NC deformation of the 
simple collection $\{F_i\}$ in the case where the base ring is finite dimensional in Theorem~\ref{flat}.
The base ring of the deformation will be constructed in the next section.
The general case where the sequence of iterated non-trivial $r$-pointed extensions 
may not terminate will also be treated there.
\end{Rem}

We will need the following in the next section:

\begin{Lem}\label{no more hom}
Let $\{G^n\}$ with $G^n = \bigoplus_i G^n_i$ 
be a sequence of iterated non-trivial $r$-pointed extensions of a simple collection $\{F_i\}$.
Then $\dim \text{Hom}(G^n_i,F_j) = \delta_{ij}$ for all $i,j,n$.
\end{Lem}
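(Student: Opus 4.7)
The plan is to proceed by induction on $n$, with the base case $n=0$ immediate since $G^0_i = F_i$ and $\{F_i\}$ is a simple collection. For the inductive step, since $G^{n+1}_{i'} = G^n_{i'}$ for $i' \ne i(n)$, the claim for those components follows at once from the inductive hypothesis. So the real content is to analyze $G^{n+1}_{i(n)}$, which fits in the non-trivial extension
\[
0 \to F_{j(n)} \to G^{n+1}_{i(n)} \to G^n_{i(n)} \to 0
\]
with extension class $\xi \in \text{Ext}^1(G^n_{i(n)}, F_{j(n)})$ nonzero.

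The tool is the long exact sequence obtained by applying $\text{Hom}(-,F_j)$:
\[
0 \to \text{Hom}(G^n_{i(n)},F_j) \to \text{Hom}(G^{n+1}_{i(n)},F_j) \to \text{Hom}(F_{j(n)},F_j) \xrightarrow{\delta} \text{Ext}^1(G^n_{i(n)},F_j).
\]
By the inductive hypothesis the first term has dimension $\delta_{i(n),j}$ and the third has dimension $\delta_{j(n),j}$ (by simplicity). The crucial observation is that $\delta$, when $j = j(n)$, sends $\text{id}_{F_{j(n)}}$ to the extension class $\xi$, which is nonzero; hence $\delta$ is injective whenever its source is nonzero.

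Splitting into the four cases according to whether $j = i(n)$ and whether $j = j(n)$ then yields $\dim\text{Hom}(G^{n+1}_{i(n)},F_j) = \delta_{i(n),j}$ in each case: if $j \ne i(n)$ both the $j \ne j(n)$ and $j = j(n)$ subcases give $0$ (using injectivity of $\delta$ in the latter), while if $j = i(n)$ the two subcases give $1$ (directly in the $j(n) \ne i(n)$ case, and via the $1+1-1$ count using injectivity of $\delta$ in the $j(n) = i(n)$ case). This completes the induction.

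The main potential obstacle is simply the bookkeeping of cases, but the crux is conceptually clean: it rests entirely on the fact that the image of $\text{id}_{F_{j(n)}}$ under the connecting map is the extension class itself, which is nonzero by the definition of a non-trivial iterated extension. In other words, the non-triviality of each extension step is exactly what prevents new homomorphisms to the $F_j$ from accumulating.
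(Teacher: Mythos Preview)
Your proof is correct and follows essentially the same approach as the paper: induction on $n$, applying $\text{Hom}(-,F_j)$ to the extension, and using that the connecting map sends $\text{id}_{F_{j(n)}}$ to the nonzero extension class. The paper is slightly more economical in that it simply observes the first arrow in the long exact sequence is bijective in both cases (so $\dim\text{Hom}(G^{n+1}_{i(n)},F_j)=\dim\text{Hom}(G^n_{i(n)},F_j)$ directly), rather than splitting further according to whether $j=i(n)$, but the argument is the same.
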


\begin{proof}
We proceed by induction on $n$.
If $n=0$, then the assertion is true by the assumption of the simplicity.
Suppose that we have an exact sequence
\[
0 \to F_j \to G^{n+1}_i \to G^n_i \to 0.
\]
Then we have a long exact sequence
\[
0 \to \text{Hom}(G^n_i,F_k) \to \text{Hom}(G^{n+1}_i,F_k) \to 
\text{Hom}(F_j,F_k) \to \text{Ext}^1(G^n_i,F_k)
\]
for any $k$.
If $k \ne j$, then the third term vanishes, hence the first arrow is bijective.
If $k = j$, then the last arrow is injective because the extension is non-trivial.
Therefore the first arrow is bijective again.
Hence we conclude the proof.
\end{proof}

\section{iterated universal $r$-pointed extensions}

We will construct a sequence of universal $r$-pointed extensions of a simple collection $\{F_i\}$ 
under the assumption that $\dim \text{Ext}^1(F,F) < \infty$, and 
prove the existence of a versal $r$-pointed NC deformation.

\begin{Prop}\label{universal}
Let $\{F_i\}_{r=1}^r$ be a simple collection, let $F= \bigoplus_{i=1}^r F_i$ be the sum of the collection, 
and set $F = F^{(0)}$ and $F_i = F_i^{(0)}$.
Assume that $\dim \text{Ext}^1(F,F) < \infty$.
Then there exists a sequence of universal extensions $F^{(n)}  =  \bigoplus_{i=1}^r F_i^{(n)}$ 
given by 
\[
0 \to \bigoplus_j \text{Ext}^1(F_i^{(n)},F_j)^* \otimes F_j \to F_i^{(n+1)} \to F_i^{(n)} \to 0
\]
for each $i$, which is also obtained by a sequence of iterated non-trivial $r$-pointed extensions of the collection $\{F_i\}$.
\end{Prop}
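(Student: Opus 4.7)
The plan is to induct on $n$, with the base case $F^{(0)} = F$ given. For the inductive step, assume $F^{(n)} = \bigoplus_i F_i^{(n)}$ has been built as an iterated non-trivial $r$-pointed extension of $\{F_i\}$. I first need $\dim \text{Ext}^1(F_i^{(n)}, F_j) < \infty$ so that the claimed universal extension makes sense. This follows by an auxiliary induction on the number of single-step extensions used in building $F^{(n)}$: for each step $0 \to F_k \to G' \to G \to 0$, the long exact sequence yields $\text{Ext}^1(G, F_j) \to \text{Ext}^1(G', F_j) \to \text{Ext}^1(F_k, F_j)$, and the hypothesis $\dim \text{Ext}^1(F, F) < \infty$ bounds the growth at each stage.

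For each $i$ separately, I would construct $F_i^{(n+1)}$ directly as a sequence of iterated non-trivial extensions that extend the $i$-th component while leaving the others fixed. Fix a basis $\xi_{j,1}, \dots, \xi_{j, d_j}$ of each $\text{Ext}^1(F_i^{(n)}, F_j)$, where $d_j := \dim \text{Ext}^1(F_i^{(n)}, F_j)$, and choose any ordering of the pairs $(j, \ell)$. Set $G^0 := F_i^{(n)}$, and at step $m$ define $G^{m+1}$ as the extension of $G^m$ by $F_{j_{m+1}}$ whose class is the image of $\xi_{j_{m+1}, \ell_{m+1}}$ under the pushforward $\text{Ext}^1(F_i^{(n)}, F_{j_{m+1}}) \to \text{Ext}^1(G^m, F_{j_{m+1}})$ induced by $G^m \twoheadrightarrow F_i^{(n)}$. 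After $M := \sum_j d_j$ steps, set $F_i^{(n+1)} := G^M$.

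The main obstacle is verifying non-triviality at each step and then identifying $G^M$ with the universal extension. The key claim, proved by induction on $m$, is that the kernel of the pushforward $\text{Ext}^1(F_i^{(n)}, F_j) \to \text{Ext}^1(G^m, F_j)$ equals the span of those $\xi_{j, \ell'}$ already consumed in constructing $G^m$. The inductive step uses the long exact sequence
\[
\text{Hom}(F_{j_{m+1}}, F_j) \to \text{Ext}^1(G^m, F_j) \to \text{Ext}^1(G^{m+1}, F_j),
\]
combined with simplicity $\dim \text{Hom}(F_{j_{m+1}}, F_j) = \delta_{j_{m+1}, j}$, to show that exactly one new relation is added per step; Lemma \ref{no more hom} ensures that no spurious maps from the intermediate $G^m$ to $F_j$ interfere with the bookkeeping. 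Non-triviality then follows because each fresh basis element lies outside the kernel at the time of its extension step, so its image in $\text{Ext}^1(G^m, F_{j_{m+1}})$ is non-zero. After $M$ steps, the entire $\text{Ext}^1(F_i^{(n)}, F_j)$ is killed in $\text{Ext}^1(G^M, F_j)$, and the long exact sequence forces the kernel $K$ of $G^M \twoheadrightarrow F_i^{(n)}$ to satisfy $\dim \text{Hom}(K, F_j) = d_j$ for each $j$. An induction on the length of the iterated extension structure of $K$ (using the same $\text{Hom}$-dimension count to rule out non-trivial self-extensions among the composition factors) forces $K \cong \bigoplus_j F_j^{d_j}$, and the extension class corresponds to the identity in $\bigoplus_j \text{End}_k(\text{Ext}^1(F_i^{(n)}, F_j))$ by construction, so $G^M$ is indeed the desired universal extension.
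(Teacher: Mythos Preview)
Your argument is correct and proceeds by essentially the same mechanism as the paper's: choose a basis of each $\text{Ext}^1(F_i^{(n)},F_j)$, peel off one basis vector at a time, and use the simplicity hypothesis $\dim\text{Hom}(F_{j'},F_j)=\delta_{j'j}$ to show each single-step extension is non-trivial. Your ``kernel equals span of consumed basis vectors'' claim is exactly the content of the paper's commutative diagram comparing the $V_{m-1}$ and $V_m$ rows.

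The one genuine difference is direction. The paper works \emph{top-down}: it first defines $F_i^{(n+1)}$ as the universal extension via the canonical morphism $F_i^{(n)} \to \bigoplus_j \text{Ext}^1(F_i^{(n)},F_j)^* \otimes F_j[1]$ in $D(\mathcal{A})$, and then exhibits the intermediate objects $G^{M+m}$ as partial quotients of this already-constructed object. You work \emph{bottom-up}: you build $G^M$ by iterated single extensions and must then identify the result with the universal extension. This forces you to carry out the extra step of showing $K := \ker(G^M \twoheadrightarrow F_i^{(n)}) \cong \bigoplus_j F_j^{d_j}$ and that the extension class is the canonical one. Your sketch of this step is a little thin: the assertion that the Hom-count ``rules out non-trivial self-extensions among the composition factors'' is correct, but deserves the explicit inductive argument (each step of the filtration of $K$ must split because the surjection $\text{Hom}(K,F_a)\to\text{Hom}(F_a,F_a)$ is forced by the dimension count), and ``the extension class corresponds to the identity by construction'' really means that the connecting map $\text{Hom}(K,F_j)\to\text{Ext}^1(F_i^{(n)},F_j)$ is an isomorphism, which lets you choose the splitting $K\cong\bigoplus_j\text{Ext}^1(F_i^{(n)},F_j)^*\otimes F_j$ so that the class becomes the tautological one. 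The paper's direction sidesteps this entirely, which is the main advantage of starting from the universal object.
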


\begin{proof}
A natural morphism 
$F_i^{(n)} \to \bigoplus_j \text{Ext}^1(F_i^{(n)},F_j)^* \otimes F_j [1]$ in the derived category 
$D(\mathcal{A})$ of the abelian category $\mathcal{A}$ yields the extension as stated in the proposition.
We will prove that this extension is obtained as an output of a 
sequence of iterated non-trivial $r$-pointed extensions of the collection $\{F_i\}$.
We note that we have $\dim \text{Ext}^1(F_i^{(n)},F_j) < \infty$ for all $i,j,n$ under the assumption.

We write $G = F_i^{(n)}$.
We take a basis $\{v_{j,1},\dots,v_{j,M_j}\}$ of $\text{Ext}^1(G,F_j)$ for each $j$, 
and let $V_{j,m} \subset \text{Ext}^1(G,F_j)$ be
the subspaces generated by $v_{j,1},\dots,v_{j,m}$ for $1 \le m \le M_j$.
We set $V_{j,0}=0$.
The natural morphisms 
\[
G \to \bigoplus_{k=1}^{j-1} \text{Ext}^1(G,F_k)^* \otimes F_k[1] \oplus V_m^* \otimes F_j[1] 
\to \bigoplus_{k=1}^{j-1} \text{Ext}^1(G,F_k)^* \otimes F_k[1] \oplus V_{m-1}^* \otimes F_j[1]
\]
yield a commutative diagram of extensions 
\[
\begin{CD}
0 @>>> \bigoplus_{k=1}^{j-1} \text{Ext}^1(G,F_k)^* \otimes F_k \oplus V_m^* \otimes F_j 
@>>>  G^{M+m} @>>> G @>>> 0 \\
@. @VVV @VVV @V=VV @. \\
0 @>>> \bigoplus_{k=1}^{j-1} \text{Ext}^1(G,F_k)^* \otimes F_k \oplus V_{m-1}^* \otimes F_j 
@>>>  G^{M+m-1} @>>> G @>>> 0
\end{CD}
\]
where $M = \sum_{k=1}^{j-1}M_k$.
Thus we obtain extensions
\[
0 \to F_j \to G^{M+m} \to G^{M+m-1} \to 0.
\]

We will prove that these extensions are non-trivial.
Since $\text{Hom}(F_{j'},F_j) \cong \Bbbk^{\delta_{j'j}}$, we have a commutative diagram of exact sequences
\[
\begin{CD}
V_{m-1} @>>> \text{Ext}^1(G,F_j) @>>> \text{Ext}^1(G^{m-1},F_j) \\
@VVV @V=VV @VVV \\
V_m @>>> \text{Ext}^1(G,F_j) @>>> \text{Ext}^1(G^m,F_j)  
\end{CD}
\]
where the last vertical arrow has a non-trivial kernel generated by the image of $v_{j,m}$.
Therefore the extensions above are non-trivial.

A referee pointed out that 
the above universal extension can be expressed as 
\[
0 \to \text{Ext}^1(F^{(n)},F)^* \otimes_{R_0} F \to F^{(n+1)} \to F^{(n)} \to 0
\]
for $R_0 = \text{End}(F)=\Bbbk^r$.
\end{proof}

\begin{Defn}
We define a filtration of $F^{(n)}$ by $G^p(F^{(n)}) = \text{Ker}(F^{(n)} \to F^{(p-1)})$ for $0 \le p \le n+1$.
We have $G^0(F^{(n)}) = F^{(n)}$ and $G^{n+1}(F^{(n)}) = 0$.
\end{Defn}

Let $\text{End}_G(F^{(n)})$ be the ring of endomorphisms of the object $F^{(n)}$ 
which preserve the filtration $\{G^p\}$.

\begin{Lem}\label{End-surjective}
The natural ring homomorphism $\text{End}_G(F^{(n+1)}) \to \text{End}_G(F^{(n)})$ is surjective. 
\end{Lem}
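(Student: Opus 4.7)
The plan is to show that every $\phi \in \text{End}_G(F^{(n)})$ admits a filtration-preserving lift $\tilde\phi \in \text{End}_G(F^{(n+1)})$ along the projection $\pi : F^{(n+1)} \to F^{(n)}$; this is exactly what surjectivity demands. Set $V := \text{Ext}^1(F^{(n)}, F)$, $R_0 := \text{End}(F) = k^r$, and $K := \text{Ker}\,\pi$, so that Proposition~\ref{universal} identifies $K \cong V^* \otimes_{R_0} F$. Let $\xi \in \text{Ext}^1(F^{(n)}, K)$ be the class of the extension $0 \to K \to F^{(n+1)} \to F^{(n)} \to 0$. The standard diagrammatic lifting criterion says that a morphism $\tilde\phi$ fitting into the $3 \times 3$ diagram with $\phi$ on the right and some $\psi \in \text{End}(K)$ on the left exists precisely when $\phi^*\xi = \psi_*\xi$ in $\text{Ext}^1(F^{(n)}, K)$, so the task reduces to producing such a $\psi$.

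For this I would invoke the canonical identification
\[
\text{Ext}^1(F^{(n)}, V^* \otimes_{R_0} F) \;\cong\; V^* \otimes_{R_0} V \;\cong\; \text{End}_{R_0}(V),
\]
under which $\xi$ corresponds to $\text{id}_V$; this is exactly the defining universal property built into Proposition~\ref{universal}. Given $\phi$, the pullback $\phi^* : V \to V$ is $R_0$-linear, and I set $\psi := (\phi^*)^\vee \otimes \text{id}_F$ on $K = V^* \otimes_{R_0} F$. Both $\phi^*\xi$ (acting on the second factor of $V^* \otimes_{R_0} V$ by $\phi^*$) and $\psi_*\xi$ (acting on the first factor by $(\phi^*)^\vee$) then correspond to the same element $\phi^* \in \text{End}_{R_0}(V)$, reflecting the elementary identity $\sum_\alpha \phi^*(v_\alpha) \otimes v_\alpha^* = \sum_\alpha v_\alpha \otimes (\phi^*)^\vee(v_\alpha^*)$ for dual bases. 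Hence $\phi^*\xi = \psi_*\xi$ and a lift $\tilde\phi$ covering $\phi$ exists.

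It then remains to verify that $\tilde\phi$ automatically preserves the filtration $G^\bullet$ on $F^{(n+1)}$. Since $G^{n+1}(F^{(n)}) = 0$, one sees directly that $G^p(F^{(n+1)}) = \pi^{-1}(G^p(F^{(n)}))$ for $1 \le p \le n+1$; and since $\tilde\phi$ covers $\phi$, which preserves $G^p(F^{(n)})$, this forces $\tilde\phi(G^p(F^{(n+1)})) \subset \pi^{-1}(G^p(F^{(n)})) = G^p(F^{(n+1)})$, with the extreme cases $p = 0$ and $p = n+2$ being trivial. The hard part of the argument is squarely the middle step — correctly pinning down that the universal extension class $\xi$ corresponds to $\text{id}_V$ under $\text{Ext}^1(F^{(n)}, K) \cong \text{End}_{R_0}(V)$ and then verifying $\phi^*\xi = \psi_*\xi$ in this language. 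Everything else is routine obstruction theory and filtration bookkeeping.
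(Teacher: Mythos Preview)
Your proof is correct and follows essentially the same approach as the paper: the paper phrases the lift via the triangulated-category axiom (TR3) applied to the commutative square $f^{**}\circ(\text{nat}) = (\text{nat})\circ f$ in $D(\mathcal{A})$, which is exactly your equation $\phi^*\xi = \psi_*\xi$ with $\psi = f^{**} = (\phi^*)^\vee\otimes\text{id}_F$. You are in fact more explicit than the paper on two points it leaves implicit --- the verification that the square commutes (i.e.\ that $\xi \leftrightarrow \text{id}_V$ forces $\phi^*\xi = \psi_*\xi$) and the check that the resulting lift preserves the filtration.
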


\begin{proof}
Since we only consider endomorphisms preserving the filtration, there is certainly a natural ring homomorphism. 
For any element $f \in \text{End}_G(F^{(n)})$, 
we have a commutative diagram in the derived category $D(\mathcal{A})$:
\[
\begin{CD}
F^{(n+1)} @>>> F^{(n)} @>>> \bigoplus_j \text{Ext}^1(F^{(n)},F_j)^* \otimes F_j[1] \\
@. @VfVV @Vf^{**}VV \\
F^{(n+1)} @>>> F^{(n)} @>>> \bigoplus_j \text{Ext}^1(F^{(n)},F_j)^* \otimes F_j[1]. 
\end{CD}
\]
We obtain a lifting of $f$ to $\text{End}_G(F^{n+1})$ by the axiom of a triangulated category. 
\end{proof}

By Lemma~\ref{no more hom}, we have the following

\begin{Lem}
Let $r_0 = r$ and $r_{m+1} = \sum_j \dim \text{Ext}^1(F^{(m)},F_j)$ for $m \ge 0$.
Then $\dim \text{End}_G(F^{(n)}) = \sum_{m=0}^n r_m$.
\end{Lem}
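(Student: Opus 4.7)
The plan is to prove the formula by induction on $n$, using the surjection from Lemma~\ref{End-surjective} and computing the kernel at each step.

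For the base case $n=0$, the filtration on $F^{(0)}=F$ is trivial, so $\text{End}_G(F^{(0)}) = \text{End}(F)$. Since $\{F_i\}$ is a simple collection, this is $k^r$, which has dimension $r_0$.

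For the inductive step, suppose $\dim \text{End}_G(F^{(n)}) = \sum_{m=0}^{n} r_m$. Write $K = G^1(F^{(n+1)}) = \bigoplus_j \text{Ext}^1(F^{(n)},F_j)^* \otimes F_j$, so that $F^{(n+1)}/K \cong F^{(n)}$. First I would verify that the quotient map $F^{(n+1)} \to F^{(n)}$ identifies $G^p(F^{(n+1)})/K$ with $G^p(F^{(n)})$ for $p \ge 1$, so that restriction really yields a well-defined ring homomorphism $\text{End}_G(F^{(n+1)}) \to \text{End}_G(F^{(n)})$; this map is surjective by Lemma~\ref{End-surjective}. By the inductive hypothesis it remains to show that the kernel has dimension $r_{n+1}$.

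The kernel consists of filtration-preserving $f: F^{(n+1)} \to F^{(n+1)}$ whose induced map on $F^{(n)} = F^{(n+1)}/K$ is zero, i.e. those $f$ whose image lies in $K$. Such an $f$ is the same data as an element of $\text{Hom}(F^{(n+1)}, K)$, provided that every such morphism, post-composed with $K \hookrightarrow F^{(n+1)}$, preserves the filtration. To see this, I would decompose $K$ into its simple summands $F_j$ and note that any morphism $\alpha: F^{(n+1)} \to F_j$ factors through the projection $F^{(n+1)} \to F^{(n+1)}_j$ composed with $F^{(n+1)}_j \to F_j$; since $G^p(F^{(n+1)}_j) \subseteq G^1(F^{(n+1)}_j) = \ker(F^{(n+1)}_j \to F_j)$ for all $p \ge 1$, the map $\alpha$ vanishes on $G^p$ for $p\ge 1$. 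Hence the composite with $K \hookrightarrow F^{(n+1)}$ trivially preserves $G^p$, and filtration-preservation is automatic. Finally, using that $K$ is a direct sum of copies of the $F_j$, together with Lemma~\ref{no more hom} which gives $\dim \text{Hom}(F^{(n+1)},F_j) = 1$, one computes
\[
\dim \text{Hom}(F^{(n+1)},K) = \sum_j \dim \text{Ext}^1(F^{(n)},F_j)^* = r_{n+1},
\]
completing the induction.

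The main obstacle I expect is the automatic filtration-preservation step: one must be careful that the filtration $G^p$ on $F^{(n+1)}$ restricted to the subobject $K = G^1$ is compatible with the decomposition of $K$ into simple factors, so that every morphism into $K$ lifts to a genuine element of $\text{End}_G(F^{(n+1)})$. Once that compatibility is established via Lemma~\ref{no more hom}, the dimension count is immediate.
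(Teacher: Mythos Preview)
Your approach is correct and essentially identical to the paper's: both prove the formula by induction on $n$, using the short exact sequence
\[
0 \to \text{Hom}\bigl(F^{(n+1)}, \textstyle\bigoplus_j \text{Ext}^1(F^{(n)},F_j)^*\otimes F_j\bigr) \to \text{End}_G(F^{(n+1)}) \to \text{End}_G(F^{(n)}) \to 0
\]
together with Lemma~\ref{no more hom} to compute the dimension of the kernel. One notational slip: your $K$ is $G^{n+1}(F^{(n+1)})$, not $G^1(F^{(n+1)})$, since $G^p(F^{(n+1)}) = \ker(F^{(n+1)}\to F^{(p-1)})$ and the kernel of $F^{(n+1)}\to F^{(n)}$ is the \emph{smallest} nonzero step of the filtration; the rest of your argument already treats $K$ correctly as such. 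Your verification that every map $F^{(n+1)}\to K$ automatically preserves the filtration is in fact more careful than the paper, which simply asserts the exact sequence above without justifying that the first arrow lands in $\text{End}_G$.
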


\begin{proof}
We have $\dim \text{End}_G(F^{(0)}) = r$.
We have the following exact sequence
\begin{equation}\label{End}
0 \to \text{Hom}(F^{(m+1)},\bigoplus_j \text{Ext}^1(F^{(m)},F_j)^* \otimes F_j)
\to \text{End}_G(F^{(m+1)}) \to \text{End}_G(F^{(m)}). 
\end{equation}
The dimension of the first term is equal to 
$\sum_j \dim \text{Ext}^1(F^{(m)},F_j) = r_{m+1}$.
Therefore we conclude the proof. 
\end{proof}

\begin{Lem}
$\dim \text{End}(F^{(n)}) \le \sum_{m=0}^n r_m$.
\end{Lem}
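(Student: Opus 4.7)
The plan is to filter the endomorphism algebra by the images lying deeper in the $G$-filtration of the target and to bound each graded piece via Lemma~\ref{no more hom}. For $0 \le p \le n+1$ define
\[
F^p := \{\phi \in \text{End}(F^{(n)}) : \phi(F^{(n)}) \subseteq G^p(F^{(n)})\},
\]
so that $F^0 = \text{End}(F^{(n)})$ and $F^{n+1} = 0$. Any $\phi \in F^p$ factors uniquely through the monomorphism $G^p(F^{(n)}) \hookrightarrow F^{(n)}$; post-composing with the projection $G^p(F^{(n)}) \twoheadrightarrow G^p(F^{(n)})/G^{p+1}(F^{(n)})$ yields a $k$-linear map whose kernel is exactly $F^{p+1}$, giving an injection
\[
F^p/F^{p+1} \hookrightarrow \text{Hom}(F^{(n)}, G^p(F^{(n)})/G^{p+1}(F^{(n)})).
\]

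Next I would identify the successive quotients using the universal extensions of Proposition~\ref{universal}. Since $F^{(n)}/G^p(F^{(n)}) = F^{(p-1)}$ (with the convention $F^{(-1)}=0$), one has $G^p/G^{p+1} \cong \ker(F^{(p)} \to F^{(p-1)})$, which equals $\bigoplus_j \text{Ext}^1(F^{(p-1)}, F_j)^* \otimes F_j$ for $1 \le p \le n$ and equals $F$ for $p=0$. Lemma~\ref{no more hom} gives $\dim \text{Hom}(F^{(n)}, F_j) = 1$ for every $j$, hence $\dim \text{Hom}(F^{(n)}, F) = r = r_0$, and
\[
\dim \text{Hom}(F^{(n)}, G^p/G^{p+1}) = \sum_j \dim \text{Ext}^1(F^{(p-1)}, F_j) = r_p
\]
for $1 \le p \le n$.

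Summing the bounds on the graded pieces then yields
\[
\dim \text{End}(F^{(n)}) = \sum_{p=0}^n \dim F^p/F^{p+1} \le \sum_{p=0}^n r_p = \sum_{m=0}^n r_m.
\]
There is no real obstacle here; the argument is a direct graded-piece estimate once Lemma~\ref{no more hom} is invoked to control $\text{Hom}(F^{(n)}, F_j)$. Note that combined with the previous lemma, which computes $\dim \text{End}_G(F^{(n)}) = \sum_{m=0}^n r_m$, together with the inclusion $\text{End}_G(F^{(n)}) \subseteq \text{End}(F^{(n)})$, this forces equality throughout and shows a posteriori that every endomorphism of $F^{(n)}$ automatically preserves the filtration $G^\bullet$.
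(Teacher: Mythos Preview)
Your proof is correct and follows essentially the same approach as the paper: both arguments use Lemma~\ref{no more hom} to get $\dim \text{Hom}(F^{(n)},F_j)=1$, then filter the target $F^{(n)}$ by subobjects whose successive quotients are sums of the $F_j$'s and bound $\text{Hom}(F^{(n)},-)$ into each quotient. The paper states this in one line (``the total number of the $F_i$'s in the extension process \dots\ we deduce our inequality by exact sequences''), while you make the filtration $F^p$ and the graded-piece estimate explicit; the underlying idea is the same.
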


\begin{proof}
By Lemma~\ref{no more hom}, we have $\dim \text{Hom}(F^{(n)}, F_i) = 1$ for all $i$.
Since the total number of the $F_i$'s in the extension process yielding $F^{(n)}$ is 
equal to $\sum_{m=0}^n r_m$, we deduce our inequality by exact sequences.
\end{proof}

\begin{Cor}\label{End-bijective}
The natural inclusion 
$\text{End}_G(F^{(n)}) \subseteq \text{End}(F^{(n)})$ is bijective.
\end{Cor}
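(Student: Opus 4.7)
The plan is to prove this by a dimension count, combining the two lemmas immediately preceding the corollary.

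First I would observe that the map in question is, by definition, an inclusion of $k$-vector spaces, so it is automatically injective; what needs to be established is surjectivity. Both spaces are finite-dimensional (each at most $\sum_{m=0}^n r_m$), so surjectivity is equivalent to the equality of dimensions.

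Next I would simply chain the bounds provided by the two preceding lemmas. The inclusion $\text{End}_G(F^{(n)}) \subseteq \text{End}(F^{(n)})$ gives
\[
\dim \text{End}_G(F^{(n)}) \le \dim \text{End}(F^{(n)}).
\]
The first lemma computes the left-hand side as $\sum_{m=0}^n r_m$, while the second lemma bounds the right-hand side above by the same number $\sum_{m=0}^n r_m$. Squeezing, all three quantities coincide, so $\dim \text{End}_G(F^{(n)}) = \dim \text{End}(F^{(n)})$, and the injective inclusion between finite-dimensional $k$-vector spaces of equal dimension must be bijective.

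There is essentially no obstacle here: the corollary is a bookkeeping consequence, and all the real work has already been done in establishing the two dimension formulas. If anything requires care, it is the reassurance that $\text{End}_G(F^{(n)})$ really does sit inside $\text{End}(F^{(n)})$ as a $k$-subspace (it is the subspace of filtration-preserving endomorphisms), but this is immediate from the definition. It is also worth flagging that the equality is one of $k$-algebras, not merely vector spaces, since the inclusion respects composition; hence every endomorphism of $F^{(n)}$ automatically preserves the filtration $\{G^p\}$, a fact that will presumably be useful in identifying $\text{End}(F^{(n)})$ with the base ring of the deformation in the sequel.
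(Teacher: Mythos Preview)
Your argument is correct and matches the paper's intent exactly: the corollary is stated without proof there, being an immediate consequence of squeezing the inclusion between the equality of Lemma~4.4 and the upper bound of Lemma~4.5. Your additional remark that the bijection is one of $k$-algebras (so every endomorphism preserves the filtration) is a worthwhile observation.
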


Let $R^{(n)} = \text{End}(F^{(n)})$ and 
$R_{ij}^{(n)} = \text{Hom}(F_j^{(n)},F_i^{(n)})$.
Then we can write in a matrix form as $R^{(n)} = (R_{ij}^{(n)})$.
Let $M^{(n)} = \text{Ker}(R^{(n)} \to R^{(0)})$ and $M^{(n)}_i = \text{Ker}(R^{(n)} \to R^{(0)} \to \Bbbk)$, 
where the
last arrow is the projection to the $i$-th factor.

\begin{Prop}
$R^{(n)} \in (\text{Art}_r)$.
\end{Prop}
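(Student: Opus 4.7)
The plan is induction on $n$, verifying each of the three defining requirements for $R^{(n)} \in (\text{Art}_r)$ at each stage. The base case $n=0$ is trivial since $R^{(0)} = \text{End}(F) = k^r$ by simplicity, so $M^{(0)} = 0$. For the inductive step, I assume $R^{(n-1)} \in (\text{Art}_r)$. The $r$-pointed structure on $R^{(n)}$ is immediate: the map $k^r \to R^{(n)}$ is given by the orthogonal idempotents $e_i$ associated to the decomposition $F^{(n)} = \bigoplus_i F_i^{(n)}$, while the map $R^{(n)} \to R^{(0)} = k^r$ is obtained by iterating Lemma~\ref{End-surjective} (equivalently, from the induced action on $F^{(n)}/G^1(F^{(n)}) = F$); the composition is the identity because each $e_i$ descends to the standard idempotent of $R^{(0)}$ associated to $F_i$. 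Finite dimensionality follows from the lemma giving $\dim R^{(n)} = \sum_{m=0}^n r_m$, each $r_m$ being finite by a short induction on $m$ using the finite filtration of $F^{(m-1)}$, the long exact sequence for $\text{Ext}^1(-,F_j)$, and the hypothesis $\dim \text{Ext}^1(F,F) < \infty$.

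The main task is the nilpotency of $M^{(n)}$. Combining equation~(\ref{End}) with Lemma~\ref{End-surjective} and Corollary~\ref{End-bijective}, there is a short exact sequence of rings
\[
0 \to K^{(n)} \to R^{(n)} \to R^{(n-1)} \to 0,
\]
where $K^{(n)} = \text{Hom}(F^{(n)}, G^n(F^{(n)}))$ and $G^n(F^{(n)}) = \bigoplus_j \text{Ext}^1(F^{(n-1)},F_j)^* \otimes F_j$ is the bottom piece of the filtration. Since $M^{(n)}$ is the preimage of $M^{(n-1)}$ and $(M^{(n-1)})^N = 0$ by induction, we have $(M^{(n)})^N \subseteq K^{(n)}$, so it suffices to prove $(K^{(n)})^2 = 0$.

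The crux is therefore the square-zero property of $K^{(n)}$. By Lemma~\ref{no more hom}, $\dim \text{Hom}(F^{(n)}, F_j) = 1$, spanned by the canonical composite $F^{(n)} \to F \to F_j$; hence every $\phi \in K^{(n)}$ factors as $F^{(n)} \xrightarrow{\pi} F \xrightarrow{\bar\phi} G^n(F^{(n)})$, where $\pi$ is the canonical projection with kernel $G^1(F^{(n)})$. For $\phi_1, \phi_2 \in K^{(n)}$ regarded as endomorphisms of $F^{(n)}$ via the inclusion $G^n(F^{(n)}) \hookrightarrow F^{(n)}$, the composite $\phi_1 \circ \phi_2$ factors through the subcomposite $G^n(F^{(n)}) \hookrightarrow F^{(n)} \xrightarrow{\pi} F$, which vanishes because $G^n(F^{(n)}) \subseteq G^1(F^{(n)}) = \ker\pi$. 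Hence $(K^{(n)})^2 = 0$, and thus $(M^{(n)})^{2N} = 0$, closing the induction. The main obstacle is identifying this square-zero structure, which depends essentially on Lemma~\ref{no more hom} to force every morphism $F^{(n)} \to G^n(F^{(n)})$ to factor through the canonical quotient to $F$; once this is in place, nilpotency follows at once.
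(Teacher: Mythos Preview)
Your proof is correct and follows essentially the same approach as the paper's. Both argue by induction on $n$, and the key nilpotency step rests on the same observation drawn from Lemma~\ref{no more hom}: every map $F^{(n)} \to G^n(F^{(n)})$ vanishes on $G^1(F^{(n)})$ (the paper's phrasing) or, equivalently, factors through the canonical quotient $\pi: F^{(n)} \to F$ (your phrasing); you package this as $(K^{(n)})^2 = 0$, while the paper writes it as $(M^{(n+1)})^m(G^1(F^{(n+1)})) = 0$, but the content and the resulting bound $(M^{(n)})^{2N} = 0$ are identical.
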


\begin{proof}
There is a ring homomorphism $R^{(n)} \to R^{(0)} \cong \Bbbk^r$.
The idempotent $e_i$ of $R^{(n)}$ coincides with the projection $F^{(n)} \to F^{(n)}_i \subset F^{(n)}$ to the 
$i$-th factor.
This gives the $\Bbbk^r$-algebra structure of $R^{(n)}$.
We know already that $\dim R^{(n)} < \infty$.

We will prove that $M^{(n)}$ is nilpotent by induction on $n$.
$M^{(0)} = 0$.
We assume that $(M^{(n)})^m = 0$ for some $m > 0$, and consider $M^{(n+1)}$.
By the assumption, $(M^{(n+1)})^m(F^{(n+1)}) \subset G^{n+1}(F^{(n+1)})$, 
where $G^{n+1}(F^{(n+1)}) = \bigoplus_j \text{Ext}^1(F^{(n)},F_j)^* \otimes F_j$.
There is an exact sequence
\[
0 \to \text{Hom}(F^{(0)},G^{n+1}(F^{(n+1)})) \to \text{Hom}(F^{(n+1)},G^{n+1}(F^{(n+1)}))
\to \text{Hom}(G^1(F^{(n+1)}),G^{n+1}(F^{(n+1)})).
\]
The first homomorphism is bijective by Lemma~\ref{no more hom}, hence the second homomorphism is zero. 
It follows that $(M^{(n+1)})^m(G^1(F^{(n+1)})) = 0$.
Therefore $(M^{(n+1)})^{2m} = 0$.
\end{proof}

\begin{Thm}\label{flat}
(1) The above constructed $F^{(n)}$ with a natural isomorphism 
$\phi^{(n)}: R^{(n)}/M^{(n)} \otimes_{R^{(n)}} F^{(n)} \cong F$
is an $r$-pointed NC deformation of the simple collection $\{F_i\}$ 
over the ring $R^{(n)}$. 

(2) The sequence $\{(F^{(n)}, \phi^{(n)})\}$ of $r$-pointed NC deformations over $\{(R^{(n)}, M^{(n)})\}$ is 
a versal deformation of the simple collection $\{F_i\}$.
\end{Thm}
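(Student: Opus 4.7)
My overall plan is to prove (1) and (2) by exploiting the identification between the geometric filtration $\{G^p(F^{(n)})\}$ on $F^{(n)}$ and the $M^{(n)}$-adic filtration coming from the $R^{(n)}$-module structure. Once these two filtrations agree, flatness reduces to a computation on associated graded pieces, and versality follows from the universal property encoded in Proposition~\ref{universal}.

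For part (1), I would first establish that $G^p(F^{(n)}) = (M^{(n)})^p \cdot F^{(n)}$ for all $p$. The containment $(M^{(n)})^p F^{(n)} \subseteq G^p(F^{(n)})$ is straightforward: since $R^{(n)}/M^{(n)} \cong k^r$ acts on the semisimple quotient $F^{(n)}/G^1 \cong F$ via the canonical projections onto the $F_i$, elements of $M^{(n)}$ map $G^p$ into $G^{p+1}$. The reverse containment uses Lemma~\ref{End-surjective} and Corollary~\ref{End-bijective} to produce enough endomorphisms of $F^{(n)}$ realizing the successive extension classes. Once the two filtrations are identified, the case $p = 0$ immediately gives the isomorphism $R^{(n)}/M^{(n)} \otimes_{R^{(n)}} F^{(n)} \cong F$, with the correct decomposition into the $F_i$ coming from the idempotents $e_i$.

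For flatness over the NC Artinian ring $R^{(n)}$, I would apply the graded criterion: it suffices to check that the natural maps
\[
(M^{(n)})^p / (M^{(n)})^{p+1} \otimes_{k^r} F \longrightarrow G^p(F^{(n)})/G^{p+1}(F^{(n)})
\]
are isomorphisms for all $p$, which by the nilpotence of $M^{(n)}$ implies exactness of $\otimes_{R^{(n)}} F^{(n)}$ on every short exact sequence of right $R^{(n)}$-modules. The right-hand side, by construction of the iterated universal extension in Proposition~\ref{universal}, is canonically $\bigoplus_j \text{Ext}^1(F^{(p-1)},F_j)^* \otimes F_j$. The left-hand side admits a parallel description via the exact sequence~(\ref{End}), since that sequence computes the graded pieces of $R^{(n)}$ as duals of the corresponding Ext groups. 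Matching these two descriptions is where I expect the main technical difficulty to lie, because it requires care with the semilocal matrix structure $R^{(n)} = (R^{(n)}_{ij})$ and with the distinction between Ext groups of the partial extensions $F^{(p)}$ and those of $F$.

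For part (2), let $(F_{R'}, \phi')$ be a deformation over $R' \in (\text{Art}_r)$ and choose $n$ with $(M_{R'})^{n+1} = 0$. I would build a ring homomorphism $g: R^{(n)} \to R'$ inductively: given a partial $g^{(p)}: R^{(p)} \to R'/(M_{R'})^{p+1}$ together with an isomorphism $F_{R'}/(M_{R'})^{p+1}F_{R'} \cong R'/(M_{R'})^{p+1} \otimes_{R^{(p)}} F^{(p)}$, the reduction of $F_{R'}$ modulo $(M_{R'})^{p+2}$ produces by pullback an extension of $F^{(p)}$ by copies of the $F_j$, which then factors through the universal one from Proposition~\ref{universal} and supplies the next component of $g$. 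The uniqueness of the derivative $dg: M^{(n)}/(M^{(n)})^2 \to M_{R'}/M_{R'}^2$ and the stabilization of $M^{(n')}/(M^{(n')})^2$ for $n' \ge n$ both follow from the finiteness of $\dim \text{Ext}^1(F,F)$: the first graded piece $M^{(n)}/(M^{(n)})^2$ stabilizes as the canonical dual of $\bigoplus_j \text{Ext}^1(F,F_j)$ once $n$ is large enough that every such class has entered the extension process, and the pairing of this dual with the first-order deformation class of $F_{R'}$ pins down $dg$ independently of the chosen $g$.
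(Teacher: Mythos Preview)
Your approach to (1) contains a real gap. You assert that the containment $(M^{(n)})^p F^{(n)} \subseteq G^p(F^{(n)})$ is straightforward because $M^{(n)}$ acts as zero on the semisimple quotient $F^{(n)}/G^1 \cong F$; but that premise only yields $M^{(n)} \cdot G^0 \subseteq G^1$, not $M^{(n)} \cdot G^p \subseteq G^{p+1}$ for $p \geq 1$. The latter is equivalent to showing that $M^{(p)}$ annihilates the bottom piece $G^p(F^{(p)}) = \bigoplus_j \text{Ext}^1(F^{(p-1)},F_j)^* \otimes F_j$, which via the lifting in Lemma~\ref{End-surjective} reduces to showing that $M^{(p-1)}$ annihilates $\text{Ext}^1(F^{(p-1)},F_j)$ as a right $R^{(p-1)}$-module. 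This is true, but it needs an honest inductive argument using the universality of the extension (specifically, that the pullback $\text{Ext}^1(F^{(p-2)},F_j) \to \text{Ext}^1(F^{(p-1)},F_j)$ vanishes so that restriction to $G^{p-1}(F^{(p-1)})$ is injective); it is not the formality you suggest. Without this step your identification of filtrations collapses, and the graded flatness criterion never gets started.

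The paper avoids this issue altogether. Rather than matching $G^\bullet$ against the $M^{(n)}$-adic filtration, it works with the kernels $K_k = \text{Ker}(R^{(n)} \to R^{(k)})$ directly: the sequence~(\ref{End}) produces short exact sequences of right $R^{(n)}$-modules
\[
0 \to \bigoplus_j \text{Ext}^1(F^{(k)},F_j)^* \otimes R^{(n)}/M_j^{(n)} \to R^{(k+1)} \to R^{(k)} \to 0,
\]
and descending induction on $k$ shows $R^{(k)} \otimes_{R^{(n)}} F^{(n)} \cong F^{(k)}$ together with $\text{Tor}_1^{R^{(n)}}(R^{(k)},F^{(n)}) = 0$; the case $k = 0$ then gives Tor-vanishing against every simple, hence flatness. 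For (2) the paper also proceeds differently from your sketch: instead of the $M_{R'}$-adic filtration it refines to a composition series $M = I_0 \supset I_1 \supset \cdots \supset I_n = 0$ with simple quotients, and the inductive step produces the required ring homomorphism by passing through an explicit fibre-product deformation (Lemma~\ref{amalsum}) --- a construction your outline does not supply.
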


\begin{proof}
(1) Since $\dim \text{Hom}(F^{(n)},F_j) = 1$, we have $\text{Hom}(F^{(n)},F_j) \cong R^{(n)}/M^{(n)}_j$ 
as right $R^{(n)}$-modules for all $i$. 
Indeed if $\text{Hom}(F^{(n)},F_j)$ is generated by the natural projection $f_j$, then
$M^{(n)}_j = \{s \in R^{(n)} \mid f_js = 0\}$.
Thus we have exact sequences of right $R^{(n)}$-modules
\[
0 \to \bigoplus_j \text{Ext}^1(F^{(k)},F_j)^* \otimes R^{(n)}/M^{(n)}_j \to R^{(k+1)} \to R^{(k)} \to 0
\]
for $0 \le k < n$ by (\ref{End}), where the last arrow is surjective by 
Lemma~\ref{End-surjective} and Corollary~\ref{End-bijective}.

We prove that $R^{(k)} \otimes_{R^{(n)}} F^{(n)} \cong F^{(k)}$ and
$\text{Tor}^1_{R^{(n)}}(R^{(k)}, F^{(n)}) = 0$ by the descending induction on $k$.
If $k = n$, then this is obvious.
By taking the tensor product $\otimes_{R^{(n)}} F^{(n)}$ with the above exact sequence, 
we obtain the universal extension exact sequence
\[
0 \to \bigoplus_j \text{Ext}^1(F^{(k)},F_j)^* \otimes F_j \to F^{(k+1)} \to F^{(k)} \to 0.
\]
Therefore, if our assertion is true for $k+1$ for some $0 \le k < n$, then it is also true for $k$.
If we set $k = 0$, then we obtain
\[
R^{(n)}/M^{(n)}_i \otimes_{R^{(n)}} F^{(n)} \cong F_i
\]
and 
\[
\text{Tor}^1_{R^{(n)}}(R^{(n)}/M^{(n)}_i, F^{(n)}) = 0
\]
for all $i$.
There are no simple $R^{(n)}$-modules other than the $R^{(n)}/M^{(n)}_i$, and 
any right $R^{(n)}$-module of finite type is an iterated extension of simple modules.
Therefore $F^{(n)}$ is flat over $R^{(n)}$.

(2) Let $F_R$ be an arbitrary $r$-pointed NC deformation of $F$ over $(R,M)$.
Since $M/M^2$, and hence $M^k/M^{k+1}$ for all $k$, are direct sums 
of simple modules $R/M_i \cong \Bbbk$,  
there exists a decreasing sequence of two sided ideals $\{I_m\}$ of $R$ 
such that $I_0 = M$, $I_n = 0$ and $I_m/I_{m+1} \cong \Bbbk$ for all $m$.
We will inductively construct $\Bbbk$-algebra homomorphisms $f_m: R^{(a_m)} \to R/I_m$ for some 
non-decreasing sequence of integers $a_m$ such that
we have isomorphisms $R/I_m \otimes_R F_R \cong R/I_m \otimes_{R^{(a_m)}} F^{(a_m)}$ 
over $R/I_m$ for all $m$.  

Assume that $f_m$ is already constructed, and let us extend it to $f_{m+1}$.
We consider an extension of algebras 
\[
0 \to I_m/I_{m+1} \to R/I_{m+1} \to R/I_m \to 0
\]
and the corresponding extension of objects
\[
0 \to F_i \to R/I_{m+1} \otimes_R F_R \to R/I_m \otimes_R F_R \to 0
\]
for some $i$.
Since there is an isomorphism $R/I_m \otimes_R F_R \cong R/I_m \otimes_{R^{(a_m)}} F^{(a_m)}$, 
the homomorphism $f_m$ induces a natural homomorphism 
$f_m^*: \text{Ext}^1(R/I_m \otimes_R F_R,F_i) \to \text{Ext}^1(F^{(a_m)},F_i)$.
Let 
\[
0 \to F_i \to G \to F^{(a_m)} \to 0
\]
be the induced extension given by 
$G = \text{Ker}((R/I_{m+1} \otimes_R F_R) \oplus F^{(a_m)} \to R/I_m \otimes_R F_R)$.
It is an NC deformation over $R' = \text{Ker}(R/I_{m+1} \oplus R^{(a_m)} \to R/I_m)$ 
(see Lemma~\ref{amalsum} below).
Let $p: R' \to R/I_{m+1}$ and $q: R' \to R^{(a_m)}$ be projections. 
Then we have $R/I_{m+1} \otimes_{R'} G \cong R/I_{m+1} \otimes_R F_R$ by $p$, and 
$R^{(a_m)} \otimes_{R'} G \cong F^{(a_m)}$ by $q$.
 
There are two cases.
If the extension is trivial as $G \cong F^{(a_m)} \oplus F_i$, then we have $R' \cong R^{(a_m)} \oplus R/M_i$.
There is a homomorphism $g: R^{(a_m)} \to R'$ given by the identity and the projection such that 
we have $R' \otimes_{R^{(a_m)}} F^{(a_m)} \cong G$ by $g$.
We set $a_{m+1}=a_m$.

If the extension is non-trivial, then there is a non-zero elemet $\xi \in \text{Ext}^1(F^{(a_m)},F_i)$ corresponding to $G$.
There is a homomorphism $g: R^{(a_m+1)} \to R'$ given by 
$\xi_*: \text{Ext}^1(F^{(a_m)},F_i)^* \to \Bbbk$ such that 
we have $R' \otimes_{R^{(a_m+1)}} F^{(a_m+1)} \cong G$ by $g$.
We set $a_{m+1}=a_m+1$.

In either case, if we set $f_{m+1} = p \circ g$, then we have 
\[
R/I_{m+1} \otimes_{R^{(a_{m+1})}} F^{(a_{m+1})} \cong R/I_{m+1} \otimes_{R'} G \cong R/I_{m+1} \otimes_R F_R.
\]
This is what to be proved.

For all $i$, we have induced extensions
\[
0 \to M/MM_i \otimes_R F_R \to R/MM_i \otimes_R F_R \to F \to 0
\]
where we have $M/MM_i \otimes_R F_R \cong F_i^{b_i}$ for some $b_i \ge 0$.
Since the extension 
\[
0 \to \bigoplus \text{Ext}^1(F,F_i)^* \otimes F_i \to F^{(1)} \to F \to 0
\]
is universal,
there are uniquely determined homomorphisms $\text{Ext}^1(F,F_i)^* \to M/MM_i$ over $\Bbbk$ such that
their sum induces the first extensions from the second.
Since $R^{(1)}/M^{(1)} \cong R/M \cong \Bbbk^r$ are generated by dilations, the sum of the homomorphisms are
uniquely extended to a $\Bbbk$-algebra homomorphism $R^{(1)} \to R/M^2$ as required. 
\end{proof}

\begin{Lem}\label{amalsum}
Let $F_{R_t}$ be $r$-pointed NC deformations of a collection $\{F_i\}$ 
over $R_t \in (\text{Art}_r)$ for $t = 0,1,2$.
Assume that there are ring homomorphisms $f_t: R_t \to R_0$ such that 
$R_0 \otimes_{R_t} F_{R_t} \cong F_{R_0}$ for $t = 1,2$.
Then $F_{R_3} = \text{Ker}(F_{R_1} \oplus F_{R_2} \to F_{R_0})$ 
is an $r$-pointed NC deformation of $\{F_i\}$ 
over $R_3 = \text{Ker}(R_1 \oplus R_2 \to R_0)$.  
\end{Lem}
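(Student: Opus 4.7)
The plan is to verify three requirements: that $R_3 = R_1 \times_{R_0} R_2$ lies in $(\text{Art}_r)$, that $F_{R_3} = F_{R_1} \times_{F_{R_0}} F_{R_2}$ carries a flat left $R_3$-module structure, and that reducing modulo $M_3$ recovers $F$ with the correct decomposition into the $F_i$. The first and third are essentially formal; the substantive content lies in flatness.

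First I would check $R_3 \in (\text{Art}_r)$: the structure maps $k^r \to R_3 \to k^r$ are inherited componentwise from those of $R_1$ and $R_2$, finite dimensionality is clear from $R_3 \subseteq R_1 \oplus R_2$, and $M_3 = M_1 \times_{M_0} M_2$ is nilpotent since $M_3^n \subseteq M_1^n \oplus M_2^n$. The componentwise action then equips $F_{R_3}$ with a left $R_3$-module structure, and the defining exact sequence $0 \to F_{R_3} \to F_{R_1} \oplus F_{R_2} \to F_{R_0} \to 0$ in $\mathcal{A}$ holds because $F_{R_1} \to F_{R_0}$ is surjective (from flatness of $F_{R_1}$ and surjectivity of $R_1 \to R_0$).

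The crux is flatness, and the key technical step will be the identification $R_t \otimes_{R_3} F_{R_3} \cong F_{R_t}$ for $t = 0,1,2$. For $t=2$ the kernel of $R_3 \to R_2$ consists of pairs $(r,0)$ with $r \in I' := \text{Ker}(R_1 \to R_0)$; flatness of $F_{R_1}$ identifies the action of this ideal on $F_{R_3}$ with $\text{Ker}(F_{R_3} \to F_{R_2})$, and since $F_{R_3} \to F_{R_2}$ is surjective, this yields $R_2 \otimes_{R_3} F_{R_3} \cong F_{R_2}$. The case $t=1$ is symmetric and $t=0$ follows by composition. The same analysis also gives $\text{Tor}_1^{R_3}(R_1, F_{R_3}) = 0$: using the resolution $0 \to J \to R_3 \to R_1 \to 0$ where $J$ is the ideal of pairs $(0,s)$ with $s \in I := \text{Ker}(R_2 \to R_0)$, the change-of-rings formula produces an isomorphism $J \otimes_{R_3} F_{R_3} \cong I \otimes_{R_2} F_{R_2}$, and the latter injects into $F_{R_2} \subset F_{R_3}$ by flatness of $F_{R_2}$. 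Symmetrically $\text{Tor}_1^{R_3}(R_2, F_{R_3}) = 0$, and feeding these into the long exact Tor sequence of the $R_3$-bimodule resolution $0 \to R_3 \to R_1 \oplus R_2 \to R_0 \to 0$, together with the injectivity of $F_{R_3} \hookrightarrow F_{R_1} \oplus F_{R_2}$, forces $\text{Tor}_1^{R_3}(R_0, F_{R_3}) = 0$.

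To conclude flatness, I would use that $R_0 \cong k^r = \bigoplus_i k_i$ decomposes into simple right $R_3$-modules $k_i = R_3/(M_3)_i$, so the Tor vanishing above passes to each $k_i$. Every finitely generated right $R_3$-module has finite length with composition factors of this form; induction via the long exact Tor sequence propagates the vanishing, and direct limits extend it to arbitrary modules. The deformation isomorphism $R_3/M_3 \otimes_{R_3} F_{R_3} \cong F$ then drops out from the $t=0$ identification and the given structure on $F_{R_0}$. The main obstacle will be the identification $R_t \otimes_{R_3} F_{R_3} \cong F_{R_t}$ and the associated Tor bookkeeping; once that is in place, the spread of flatness from simple modules to all modules is routine.
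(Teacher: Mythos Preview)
Your overall strategy is sound and differs from the paper's: the paper argues by induction on the length of $\mathrm{Ker}(f_1)$, reducing to the case where this kernel is a single simple module $R_1/M_{1,j}$, and then runs a direct diagram chase to show that $M_{3,i}\otimes_{R_3}F_{R_3}\to F_{R_3}$ is injective. You instead work with the full fiber product in one shot, establishing $\mathrm{Tor}_1^{R_3}(R_t,F_{R_3})=0$ for $t=1,2$ via the change-of-rings identification $J\otimes_{R_3}F_{R_3}\cong I\otimes_{R_2}F_{R_2}$ and then passing to $t=0$. This is a clean alternative and avoids the induction.

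There is, however, a genuine gap in your final step. You write ``$R_0\cong k^r$'', but nothing in the hypotheses says this: $R_0$ is an arbitrary object of $(\mathrm{Art}_r)$. Hence $\mathrm{Tor}_1^{R_3}(R_0,F_{R_3})=0$ does not by itself give $\mathrm{Tor}_1^{R_3}(R_3/(M_3)_i,F_{R_3})=0$. The repair is easy and uses what you have already proved. Each simple $k_i$ is a right $R_1$-module via $R_1\to k^r$; choose a presentation $0\to K\to R_1^m\to k_i\to 0$ over $R_1$ and tensor with $F_{R_3}$ over $R_3$. Your vanishing $\mathrm{Tor}_1^{R_3}(R_1,F_{R_3})=0$ together with $R_1\otimes_{R_3}F_{R_3}\cong F_{R_1}$ yields
\[
\mathrm{Tor}_1^{R_3}(k_i,F_{R_3})\;\cong\;\mathrm{Ker}\bigl(K\otimes_{R_1}F_{R_1}\to F_{R_1}^m\bigr)\;=\;\mathrm{Tor}_1^{R_1}(k_i,F_{R_1})\;=\;0
\]
by flatness of $F_{R_1}$ over $R_1$. (This is just the low-degree edge of the change-of-rings spectral sequence.) With this correction your argument goes through. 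One minor imprecision: $F_{R_2}$ does not embed in $F_{R_3}$; what embeds is $I\cdot F_{R_2}$, as $\{0\}\times I F_{R_2}\subset F_{R_3}$, and that is exactly what your injectivity argument needs.
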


\begin{proof}
We prove that $F_{R_3}$ is flat as a left $R_3$-module.
Let $M_{t,i}$ be the $i$-th maximal two-sided ideals of $R_t$, i.e., 
$R_t/M_{t,i} \otimes  F_{R_t} \cong F_i$ for $t = 0,1,2,3$.
We have $M_{3,i} = \text{Ker}(M_{1,i} \oplus M_{2,i} \to M_{0,i})$.
We will prove that $\text{Tor}_1^{R_3}(R_3/M_{3,i}, F_{R_3}) = 0$ for all $i$.

By using the induction on the length of $\text{Ker}(f_1)$, it is sufficient to treat the case where 
there is an exact sequence
\[
0 \to R_1/M_{1,j} \to R_1 \to R_0 \to 0
\]
for some $j$.
Then we have
\[
\begin{split}
&0 \to R_3/M_{3,j} \to R_3 \to R_2 \to 0 \\
&0 \to R_3/M_{3,j} \to M_{3,i} \to M_{2,i} \to 0 \\
&0 \to F_j \to F_{R_3} \to F_{R_2} \to 0.
\end{split}
\]
By the flatness of the $F_{R_t}$ over $R_t$ for $t=0,1,2$, we have exact sequences
\[
0 \to M_{t,i} \otimes_{R_t} F_{R_t} \to F_{R_t} \to F_i \to 0.
\]

We consider the following commutative diagram
\[
\begin{CD}
@. @. 0 \\
@. @. @VVV \\
@. F_j @>{\alpha}>> F_j @. @. \\
@. @V{\beta}VV @VVV \\
@. M_{3,i} \otimes_{R_3} F_{R_3} @>{\gamma}>> F_{R_3} @>>> F_i @>>> 0 \\
@. @VVV @VVV @VV=V \\
0 @>>> M_{2,i} \otimes_{R_2} F_{R_2} @>>> F_{R_2} @>>> F_i @>>> 0 \\
@. @VVV @VVV \\
@. 0 @. 0 
\end{CD}
\]
where the first and second columns and the second and third rows are exact.
Then the arrow $\alpha$ is surjective.
Since $F_j$ is simple, it is also injective.
Then $\beta$ is injective.
It follows that $\gamma$ is also injective.
Therefore $F_{R_3}$ is flat.
\end{proof}

\begin{Rem}
(1) The above argument gives an explicit construction of the pro-representable hull, i.e., 
the versal $r$-pointed NC deformations, 
for a simple collection in a $\Bbbk$-linear abelian category $\mathcal{A}$ as the inverse limit of the $F^{(n)}$.

(2) The presentation of the pro-representable hull by Massey products (\cite{Laudal2}) corresponds to 
the following exact sequences
\[
0 \to \text{Ext}^1(F^{(n+1)},F_k) \to 
\bigoplus_j \text{Ext}^1(F^{(n)},F_j) \otimes \text{Ext}^1(F_j,F_k) \to 
\text{Ext}^2(F^{(n)},F_k)
\]
where we obtain inductively injective homomorphisms 
$\text{Ext}^1(F^{(n)},F) \to (\text{Ext}^1(F,F))^{\otimes (n+1)}$.

(3) The above defined versal family is not universal due to the non-commutativity of the deformation rings.
The deformation functor is pro-representable if the following condition is satisfied:
for any surjective ring homomorphism $R \to R'$, the natural homomorphism 
$\text{Aut}_R(F_R) \to \text{Aut}_{R'}(F_{R'})$ for $F_{R'} = R' \otimes_R F_R$ is surjective.
Since $\text{End}_{\Bbbk}(F_R) \cong R$, 
it follows that $\text{Aut}_R(F_R)$ coincides with the group of units of the 
center $Z(R)$ of $R$.
Since $Z(R) \to Z(R')$ is not necessarily surjective, 
there is no universal NC deformation of a simple collection in general. 

Indeed, two different homomorphisms to the versal algebra may give rise to isomorphic deformations.
Let $F_R$ be an $r$-pointed NC deformation of $F$ over $(R,M)$.
Assume that there is a socle $I$ of $R$; $I$ is a two sided ideal such that $I \cong R/M_i$ for some $i$.
Let $\bar R = R/I$.
Let $f: R^{(n)} \to R$ be the homomorphism obtained in the above theorem, and let $\bar f: R^{(n)} \to \bar R$
be its restriction.
Assume that there is an invertible element $\alpha \in R$ which is not in the center of $R$ such that its image
$\bar{\alpha} \in \bar R$ is in the center.
Then $f' = \alpha f \alpha^{-1}: R^{(n)} \to R$ is different from $f$, induces $\bar f$, and induces $F_R$ as 
shown in the following commutative diagram: 
\[
\begin{CD}
0 @>>> F_i @>>> F_R @>>> F_{\bar R} @>>> 0 \\
@. @VV=V @VV{\alpha}V @VV{\bar{\alpha}}V @. \\
0 @>>> F_i @>>> F_R @>>> F_{\bar R} @>>> 0
\end{CD}
\]
where $\alpha$ induces a $\Bbbk$-isomorphism of $F_R$ which does not commute with the action of $R$.
\end{Rem}

We give some criteria for the versality:

\begin{Cor}
Let $F = \bigoplus F_i$ be a simple collection.

(1) Let $G$ be a final object in a sequence of iterated non-trivial $r$-pointed extensions of $F$
i.e. the object obtained as the final output of the sequence of extensions.
Assume that $\text{Ext}^1(G,F) = 0$.
Then $G$ is a versal $r$-pointed NC deformation of $F$. 

(2) Let $G$ be a final object in a sequence of iterated $r$-pointed extensions of $F$ 
which are not necessarily non-trivial.
Assume that $\text{Hom}(G,F_i) \cong \Bbbk$ and $\text{Ext}^1(G,F_i)=0$ for all $i$. 
Then $G$ is a versal $r$-pointed NC deformation of $F$.  
\end{Cor}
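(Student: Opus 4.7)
My plan is to reduce both parts to the versality of the sequence $\{F^{(n)}\}$ of Theorem~\ref{flat}. For part (1), I want to identify $G$ with some $F^{(n_0)}$. By Proposition~\ref{universal}, each $F^{(n)}$ is itself the endpoint of a sequence of iterated non-trivial $r$-pointed extensions of $\{F_i\}$. Since $G$ is also such an endpoint and satisfies $\text{Ext}^1(G,F)=0$, the Corollary following Theorem~\ref{dominates} asserts that any sequence of iterated non-trivial extensions has length bounded by that of a sequence producing $G$, and can be completed to terminate at $G$. Consequently the $F^{(n)}$-producing sequences cannot grow indefinitely; there must be some $n_0$ with $\text{Ext}^1(F^{(n_0)},F)=0$, and by uniqueness of the maximal non-trivial extension we conclude $F^{(n_0)} \cong G$. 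Theorem~\ref{flat}(2) then yields versality of $G$.

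For part (2), the plan is to show that the $\text{Hom}$-hypothesis forces every extension in the given sequence to be non-trivial, reducing the problem to part (1). Write $h_k(n,i) := \dim\text{Hom}(G^n_i, F_k)$. For the extension $0 \to F_j \to G^{n+1}_i \to G^n_i \to 0$, the long exact sequence of $\text{Hom}(\cdot, F_k)$ yields $h_k(n+1,i) = h_k(n,i)$ for all $k \neq j$ (since $\text{Hom}(F_j, F_k) = 0$), while for $k = j$ the connecting map $\text{Hom}(F_j, F_j) \to \text{Ext}^1(G^n_i, F_j)$ sends $1$ to the extension class. Hence $h_j$ is preserved by a non-trivial extension and increased by $1$ by a trivial one. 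Starting from $h_k(0,i) = \delta_{ik}$ and inducting over the sequence gives $h_k(N,i) = \delta_{ik} + t_{k,i}$, where $t_{k,i}$ counts trivial extensions by $F_k$ performed on the $i$-th component. Summing over $i$, $\dim\text{Hom}(G, F_k) = 1 + \sum_i t_{k,i}$, which by hypothesis equals $1$; therefore all $t_{k,i} = 0$, the sequence contains no trivial extension, and part (1) finishes the argument.

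The main obstacle is the bookkeeping in part (2): one must verify carefully that each trivial extension strictly grows exactly one $\text{Hom}$-dimension while non-trivial extensions preserve all of them, so that the hypothesized minimality of $\dim\text{Hom}(G, F_i)$ really does exclude trivial extensions at every stage. Once that is established, both parts rest on the uniqueness of the maximal non-trivial extension (Corollary to Theorem~\ref{dominates}) together with Theorem~\ref{flat}(2).
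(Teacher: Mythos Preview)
Your argument is correct and follows the same route as the paper. The paper's proof is extremely terse: for (1) it simply invokes the existence and uniqueness of the versal deformation from Theorem~\ref{flat}, and for (2) it states in one line that the condition $\text{Hom}(G,F_i)\cong k$ forces all extensions to be non-trivial. Your proposal supplies exactly the details that the paper omits --- in (1), the identification of $G$ with $F^{(n_0)}$ via the Corollary to Theorem~\ref{dominates}, and in (2), the bookkeeping with $h_k(n,i)$ that quantifies how trivial extensions increase $\dim\text{Hom}(G,F_k)$ --- but the underlying strategy is identical.
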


\begin{proof}
(1) The assertion follows from the existence and uniqueness of a versal $r$-pointed NC deformation
proved in Theorem~\ref{flat}.

(2) The condition $\text{Hom}(G,F_i) \cong \Bbbk$ for all $i$ implies that the extensions are non-trivial.
\end{proof}

\section{$r$-pointed relative exceptional objects}

Exceptional collections yield important examples of semi-orthogonal decompositions.
We extend the definition of an exceptional object to a relative version, 
and prove that it also yields a semi-orthogonal decomposition.

We note that, if $F_R$ is an $r$-pointed NC deformation of some collection over $R$, then 
$\text{Hom}(F_R,a)$ has a right $R$-module structure
for any $a \in \mathcal{A}$.

We consider its derived version.
Let $X$ be an algebraic variety.
For $a \in D^b(\text{coh}(X))$, 
we take a quasi-isomorphism $a \to I$ to an injective complex, and
we define $R\text{Hom}(F_R,a) = \text{Hom}(F_R,I)$.
Then $R\text{Hom}(F_R,a)$ has a natural right $R$-complex structure.
This complex is well defined in $D^b(R^o)$, where $R^o$ is the opposite ring of $R$, because, 
if $a \to I'$ is another quasi-isomorphism to an injective complex, then there is 
a quasi-isomorphism $\text{Hom}(F_R,I) \to \text{Hom}(F_R,I')$ 
which is uniquely determined up to homotopy.

\begin{Defn}
Let $\{F_i\}_{i=1}^r$ be a simple collection in the category of coherent sheaves $(\text{coh}(X))$ on an 
algebraic variety $X$, 
and let $F_R = \bigoplus_i F_{R,i}$ be an $r$-pointed NC deformation 
over $R \in (\text{Art}_r)$.
Assume that $X$ is quasi-projective and $F_R$ is a perfect complex whose support is projective.

The pair $(F_R,F)$ for $F = \bigoplus_i F_i$ is said to be an 
{\em $r$-pointed relative exceptional object} if 
$R\text{Hom}(F_R,F) \cong R/M$ as right $R$-modules,
i.e., $\dim \text{Hom}(F_R,F_i) = 1$ for all $i$ and $\text{Ext}^p(F_R,F) = 0$ for all $p > 0$.
\end{Defn}

We note that $\text{Hom}(F_{R,i},F_{R,j})$ may not vanish even though $\text{Hom}(F_i,F_j) = 0$ for $i \ne j$.

We consider only those relative exceptional objects which are versal NC deformations
in this paper, but we may consider such objects in other situations.

\begin{Thm}\label{exceptional-SOD}
Let $X$ be a quasi-projective variety, and  
let $(F_R,F)$ with $F = \bigoplus_{i=1}^r F_i$ be an $r$-pointed relative exceptional object in 
$(\text{coh}(X))$ over $R$.
Assume that $F_R$ is a perfect complex whose support is projective.
Let $\langle F_i \rangle_{i=1}^r$ denote the smallest triangulated subcategory of $D^b(\text{coh}(X))$
which contains all $F_i$.
Then there is a semi-orthogonal decomposition 
\[
D^b(\text{coh}(X)) = \langle (\langle F_i \rangle_{i=1}^r)^{\perp}, \langle F_i \rangle_{i=1}^r \rangle.
\]
with an equivalence
\[
\langle F_i \rangle_{i=1}^r \cong D^b(\text{mod-}R).
\]
\end{Thm}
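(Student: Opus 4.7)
The plan is to realize the equivalence $\langle F_i\rangle_{i=1}^r \cong D^b(\text{mod-}R)$ via a tilting-type functor and then deduce the semi-orthogonal decomposition by a general adjoint argument. First I introduce
\[
\Phi \colon D^b(\text{mod-}R) \to D^b(\text{coh}(X)), \qquad \Phi(N) = N \otimes^L_R F_R,
\]
together with its right adjoint $\Psi(a) = R\text{Hom}_X(F_R, a)$. Since $F_R$ is flat over $R$, the derived tensor product agrees with the ordinary one; and since $R \in (\text{Art}_r)$, every finitely generated right $R$-module has finite length, so $\Phi(N)$ is a bounded iterated extension of shifts of the $F_i$. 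The functor $\Psi$ lands in $D^b(\text{mod-}R)$ because $F_R$ is a perfect complex with projective support, which forces $R\text{Hom}_X(F_R, a)$ to be cohomologically bounded and finite-dimensional over $k$. Adjointness is the standard tensor-Hom adjunction.

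Next I prove full faithfulness of $\Phi$ by showing that the unit $\eta_N \colon N \to \Psi\Phi(N) = R\text{Hom}_X(F_R, N \otimes_R F_R)$ is an isomorphism for every $N$. The only simple right $R$-modules are the $R/M_i$, and every object of $D^b(\text{mod-}R)$ is a finite iterated extension (in the triangulated sense) of shifts of these simples, so a five-lemma devissage reduces the claim to $N = R/M_i$. There, $\Phi(R/M_i) = F_i$ by the definition of an NC deformation, while the relative exceptional hypothesis $R\text{Hom}_X(F_R, F) \cong R/M$ splits via the idempotents $e_i$ into $R\text{Hom}_X(F_R, F_i) \cong R/M_i$; the unit is then the tautological identification.

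Finally, a fully faithful functor with a right adjoint makes its image admissible, giving the semi-orthogonal decomposition
\[
D^b(\text{coh}(X)) = \langle (\text{Im}\,\Phi)^{\perp}, \text{Im}\,\Phi\rangle.
\]
Since the $R/M_i$ generate $D^b(\text{mod-}R)$ as a triangulated category and $\Phi(R/M_i) = F_i$, one has $\text{Im}(\Phi) = \langle F_i\rangle_{i=1}^r$, and restricting the fully faithful $\Phi$ to its image yields the claimed equivalence $\langle F_i\rangle_{i=1}^r \cong D^b(\text{mod-}R)$.

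The main obstacle is the devissage proving full faithfulness: one must propagate the isomorphism $R\text{Hom}_X(F_R, F_i) \cong R/M_i$ from simples to arbitrary right $R$-modules. Flatness of $F_R$ keeps short exact sequences of right $R$-modules exact after $\otimes_R F_R$, and the higher Ext vanishing $\text{Ext}^{>0}_X(F_R, F_i) = 0$ built into the exceptional hypothesis ensures the long exact Ext sequences degenerate into short pieces matching those for $R$-modules; combining these lets one run the induction. Once full faithfulness is in hand, the passage to the SOD is routine via Bondal's admissibility criterion.
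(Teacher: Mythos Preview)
Your proposal is correct and follows the same underlying tilting strategy as the paper: both use the adjoint pair $\Phi(N)=N\otimes_R F_R$ and $\Psi(a)=R\mathrm{Hom}_X(F_R,a)$, reduce the key computation to the simples $R/M_i$ via the relative exceptional hypothesis, and identify $\langle F_i\rangle$ with $\langle F_R\rangle$.

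The execution differs, however. The paper spends most of its effort constructing the projection functor explicitly as a Fourier--Mukai transform: it builds the derived dual $F_R^*$ by choosing a resolution of $F_R$ by locally free $R\otimes_k\mathcal{O}_X$-modules (proving well-definedness via an auxiliary lifting lemma), forms the kernel $\mathrm{Cone}(F_R^*\boxtimes_R^{\mathbf L}F_R\to\mathcal{O}_{\Delta_X})$, and then checks that the resulting functor $G$ kills $F_R$ and the $F_i$. The equivalence $\langle F_R\rangle\cong D^b(\mathrm{mod}\text{-}R)$ is then invoked by citation to tilting theory. Your route is more purely categorical: you prove full faithfulness of $\Phi$ directly by d\'evissage on the simples and then appeal to Bondal's admissibility criterion, never needing the explicit kernel or the construction of $F_R^*$. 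Your approach is shorter and more self-contained; the paper's approach has the advantage of producing the projection as an honest integral functor, which can be useful when one wants to compose with other Fourier--Mukai transforms or work at the level of kernels (as the paper does later for spherical twists).
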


\begin{proof}
We prove the theorem using a down-to-earth method of diagram chasing. 
A more modern proof is presented in the following remark.

First we define the derived dual $F_R^*$ of $F_R$ in the following.
By this step, we will be able to define a functor for the semi-orthogonal decomposition 
by the cone construction. 
$F_R^*$ will be the derived dual in $D^b(\text{coh}(X))$ 
equipped with a uniquely determined right $R$-module structure.

Since $F_R$ is an $R \otimes_{\Bbbk} \mathcal{O}_X$-module which is a perfect complex as an 
$\mathcal{O}_X$-module, there is an exact sequence of left $R \otimes_{\Bbbk} \mathcal{O}_X$-modules
\[
0 \to P_m \to \dots \to P_0 \to F_R \to 0
\]
such that $P_i$ for $0 \le i < m$ are locally free as $R \otimes_{\Bbbk} \mathcal{O}_X$-modules and 
$P_m$ is locally free as an $\mathcal{O}_X$-module.
Then we define $F_R^* = \mathcal{H}om(P_{\bullet},\mathcal{O}_X)$.
It is a complex of right $R \otimes_{\Bbbk} \mathcal{O}_X$-modules.

We prove that $F_R^*$ is well defined as an object in 
$D^b(\mathcal{O}_X)$ with a right $R$-module structure.
Let $P'_{\bullet}$ be another resolution of $F_R$ of length $m'$ as above.
Then by using the lemma below, 
we construct a third resolution $P''_{\bullet}$ of $F_R$ of length $m'' \ge m,m'$ which makes  
the following diagram commutative:
\[
\begin{CD}
0 @>>> P_{m''} @>>> \dots @>>> P_0 @>>> F_R @>>> 0 \\
@AAA @AAA @. @AAA @A=AA \\
0 @>>> P''_{m''} @>>> \dots @>>> P''_0 @>>> F_R @>>> 0 \\
@VVV @VVV @. @VVV @V=VV \\
0 @>>> P'_{m''} @>>> \dots @>>> P'_0 @>>> F_R @>>> 0. 
\end{CD}
\] 
where we set $P_i = 0$ for $i > m$ and $P'_i = 0$ for $i > m'$.

Indeed we construct the $P''_i$ by induction on $i$ as follows.
Assume that the $P''_i$ are already constructed for $i \le i_0$. 
We take a locally free $R \otimes_{\Bbbk} \mathcal{O}_X$-module $P$ which surjects to 
$\text{Ker}(P''_{i_0} \to P''_{i_0-1})$.
Using the lemma, we take a very ample invertible sheaf $L$ and a subspace 
$V \subset H^0(X,L)$ which generates $L$, and let 
$P''_{i_0+1} = V \otimes_{\Bbbk} L^{-1} \otimes P$ with a homomorphism
$P''_{i_0+1} \to P''_{i_0}$ induced by the natural surjective homomorphism 
$V \otimes_{\Bbbk} L^{-1} \to \mathcal{O}_X$.
Then the composite homomorphisms $P''_{i_0+1} \to P''_{i_0} \to P_{i_0}$ and 
$P''_{i_0+1} \to P''_{i_0} \to P'_{i_0}$ factor through $P_{i_0+1}$ and $P'_{i_0+1}$, respectively.

Moreover if there are two vertical morphisms between the same complexes 
which make the diagram commutative, 
then there exists a chain homopoty  
between these morphisms due to the same lemma.

Indeed we construct a chain homotopy $\{h_i: P''_i \to P_{i+1}\}$ between two 
morphisms $\{g_i: P''_i \to P_i\}$ and $\{g'_i: P''_i \to P_i\}$ by induction on $i$ as follows.
Assume that the $h_i$ are already constructed for $i \le i_0$. 
Using the lemma, the difference of $g_{i_0+1} - g'_{i_0+1}$ and the composite homomorphism 
$P''_{i_0+1} \to P''_{i_0} \to P_{i_0+1}$ defined from $h_{i_0}$ factors through $P_{i_0+1}$.

Thus there are uniquely determined morphisms 
$\mathcal{H}om(P_{\bullet},\mathcal{O}_X) \to \mathcal{H}om(P''_{\bullet},\mathcal{O}_X)$
and $\mathcal{H}om(P'_{\bullet},\mathcal{O}_X) \to \mathcal{H}om(P''_{\bullet},\mathcal{O}_X)$
in $D^b(\mathcal{O}_X)$ which are compatible with their right $R$-module structures.
These morphisms are in fact isomorphisms in $D^b(\mathcal{O}_X)$ as is well-known.

We define $F_R^* \boxtimes_R^{\mathbf{L}} F_R = 
\mathcal{H}om(P_{\bullet},\mathcal{O}_X) \boxtimes_R F_R$
as an object in $D^b(\text{coh}(X \times X))$.
The derived tensor product is taken in $D^b(\text{coh}(X \times X))$ and not as $R$-modules.
Here we note that $F_R$ is $R$-flat.
Let $G: D^b(\text{coh}(X)) \to D^b(\text{coh}(X))$ be an integral functor defined by a Fourier-Mukai kernel
$\text{Cone}(F_R^* \boxtimes^{\mathbf{L}}_R F_R \to \Delta_X)$ on $X \times X$.

Since the $P_i$ are locally free, we have
$R\mathcal{H}om(P_{\bullet},a) \cong \mathcal{H}om(P_{\bullet},a)$, hence
\[p_{2*}(p_1^*a \otimes F_R^* \boxtimes_R^{\mathbf{L}} F_R)
= R\Gamma(X,\mathcal{H}om(P_{\bullet},a)) \otimes_R F_R 
= R\text{Hom}(F_R,a) \otimes_R F_R.
\]
Thus we have distinguished triangles
\[
R\text{Hom}(F_R,a) \otimes_R F_R \to a \to G(a) 
\]
for all $a \in D^b(\text{coh}(X))$.
We note that the derived tensor product is taken over the structure sheaves 
but we take usual tensor product over $R$.
This is justified because $F_R$ is flat over $R$.

Since $\text{Hom}(F_R,F_R[p]) = 0$ for all $p \ne 0$,
we have $R\text{Hom}(F_R,F_R) \cong \text{Hom}(F_R,F_R) \cong R$.
Hence we obtain $R\text{Hom}(F_R,G(a)) = 0$ by taking $R\text{Hom}(F_R,\bullet)$ of the 
above triangle. 
Thus we obtain a semi-orthogonal decomposition
\[
D^b(\text{coh}(X)) = \langle \langle F_R \rangle^{\perp}, \langle F_R \rangle \rangle
\]
with an equivalence
\[
\langle F_R \rangle \cong D^b(\text{mod-}R)
\]
by the tilting theory (cf. \cite{Toda-Uehara}~Lemma~3.3), where an equivalence
$\Phi: D^b(\text{mod-}R) \to \langle F_R \rangle$
is given by $\Phi(b) = b \otimes_R F_R$, and its quasi-inverse 
$\Psi: \langle F_R \rangle \to D^b(\text{mod-}R)$ by $\Psi(a) = R\text{Hom}(F_R,a)$.

Finally, since $R\text{Hom}(F_R,F_i) \cong R/M_i$, we have $G(F_i) = 0$.
Therefore $\langle F_R \rangle = \langle F_i \rangle_{i=1}^r$.
\end{proof}

\begin{Lem}
Let $X$ be a quasi-projective variety, let 
\[
\begin{CD}
Q_0 @> {f_1} >> Q_1 @>{f_2}>> \dots @>{f_n}>> Q_n
\end{CD}
\]
be an exact sequence of $R \otimes_{\Bbbk} \mathcal{O}_X$-modules, and let $P$ be a locally free
$R \otimes_{\Bbbk} \mathcal{O}_X$-module.
Then there exists a very ample invertible sheaf $L$ on $X$, without $R$-action, 
which satisfies the following condition:
for any given $R \otimes_{\Bbbk} \mathcal{O}_X$-homomorphism $g_i: L^{-1} \otimes P \to Q_i$ 
such that $f_{i+1} \circ g_i = 0$ for some $1 \le i < n$, 
there exists a $R \otimes_{\Bbbk} \mathcal{O}_X$-homomorphism $h_{i-1}: L^{-1} \otimes P \to Q_{i-1}$ such that 
$g_i = f_i \circ h_{i-1}$.
\end{Lem}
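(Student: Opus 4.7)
The plan is to reduce the lifting of $g_i$ through $f_i$ to a first-cohomology vanishing statement, and then arrange that vanishing by taking $L$ sufficiently positive via Serre's theorem.

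First, for each $1 \le i < n$ the exactness of $Q_0 \to \cdots \to Q_n$ at $Q_{i-1}$ and $Q_i$ splits off a short exact sequence of $R \otimes_k \mathcal{O}_X$-modules
\[
0 \to \ker f_i \to Q_{i-1} \xrightarrow{f_i} \ker f_{i+1} \to 0.
\]
The hypothesis $f_{i+1} \circ g_i = 0$ says that $g_i$ factors through the subsheaf $\ker f_{i+1} \hookrightarrow Q_i$, so finding the desired $h_{i-1}$ is precisely lifting this factored morphism along the surjection $f_i$.

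Next, since $P$ is locally free as an $R \otimes_k \mathcal{O}_X$-module, $\mathcal{H}om_{R \otimes_k \mathcal{O}_X}(P, -)$ is exact and the sheafy $\mathcal{E}xt^{>0}_{R \otimes_k \mathcal{O}_X}(L^{-1} \otimes P, -)$ vanishes. Combined with the identification
\[
\mathcal{H}om_{R \otimes_k \mathcal{O}_X}(L^{-1} \otimes P, \bullet) \cong L \otimes_{\mathcal{O}_X} \mathcal{H}om_{R \otimes_k \mathcal{O}_X}(P, \bullet),
\]
the local-to-global spectral sequence then identifies the obstruction to lifting $g_i$ with an element of
\[
\mathrm{Ext}^1_{R \otimes_k \mathcal{O}_X}(L^{-1} \otimes P, \ker f_i) \cong H^1\bigl(X,\; L \otimes_{\mathcal{O}_X} \mathcal{H}om_{R \otimes_k \mathcal{O}_X}(P, \ker f_i)\bigr).
\]

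Finally, I invoke Serre vanishing. The sheaves $\mathcal{H}om_{R \otimes_k \mathcal{O}_X}(P, \ker f_i)$ are coherent $\mathcal{O}_X$-modules whose supports equal those of the $\ker f_i$ (since $P$ is locally free, locally the Hom is just a direct sum of copies of the target); in the intended application these supports are projective, being contained in the support of $F_R$. Fix any very ample invertible sheaf $L_0$ on $X$. By Serre vanishing, applied uniformly to the finitely many values $i = 1, \ldots, n-1$, one chooses $N \gg 0$ so that $H^1\bigl(X, L_0^{\otimes N} \otimes \mathcal{H}om_{R \otimes_k \mathcal{O}_X}(P, \ker f_i)\bigr) = 0$ for every such $i$; then $L := L_0^{\otimes N}$ is the required very ample invertible sheaf. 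The main subtlety will be to verify that Serre vanishing genuinely applies in this quasi-projective setting, which boils down to checking coherence and projective support of the relevant Hom sheaves; this is automatic in the situation of the preceding theorem, where the source $F_R$ is a perfect complex with projective support.
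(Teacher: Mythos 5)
Your reduction is exactly the paper's proof: the paper's entire argument is the single observation that it suffices to choose $L$ with $H^1\bigl(X,\mathcal{H}om_{R\otimes_k\mathcal{O}_X}(P,\mathrm{Ker}(f_i))\otimes L\bigr)=0$ for the finitely many relevant $i$, and this is precisely what you extract from the short exact sequences $0 \to \mathrm{Ker}(f_i) \to Q_{i-1} \to \mathrm{Ker}(f_{i+1}) \to 0$, the exactness of $\mathcal{H}om_{R\otimes_k\mathcal{O}_X}(P,-)$ for $P$ locally free, and the identification of the obstruction with a class in $H^1\bigl(X, L\otimes\mathcal{H}om_{R\otimes_k\mathcal{O}_X}(P,\mathrm{Ker}(f_i))\bigr)$.

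One caveat concerns the step you yourself flag as the main subtlety. Your justification that Serre vanishing applies — namely that $\mathcal{H}om_{R\otimes_k\mathcal{O}_X}(P,\mathrm{Ker}(f_i))$ has projective support because it is contained in the support of $F_R$ — is not correct: in the intended application the $Q_j$ are the terms of a locally free resolution of $F_R$, so the kernels are syzygy sheaves whose support is in general all of $X$, not a subset of $\mathrm{Supp}(F_R)$ (and the lemma as stated makes no reference to $F_R$ at all). On a genuinely non-projective quasi-projective $X$ one cannot kill $H^1$ of an arbitrary coherent sheaf by twisting: for example $H^1(\mathbf{P}^2\setminus\{P\},\mathcal{O}(d))\neq 0$ for all $d\geq 0$. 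So as written your support argument would fail; the vanishing is immediate when $X$ is projective (or when the relevant kernels have proper support), and the paper's one-sentence proof simply asserts the sufficiency of the $H^1$-vanishing without justifying the existence of such an $L$, so this is a defect in your added justification rather than a divergence from the paper's route — but it should be flagged rather than claimed.
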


\begin{proof}
It is sufficient to take $L$ such that 
$H^1(X,\mathcal{H}om_{R \otimes_{\Bbbk} \mathcal{O}_X}(P,\text{Ker}(f_i)) \otimes L) = 0$.
\end{proof}

\begin{Rem}\label{Brown}
A referee suggested the following algebraic proof of Theorem~\ref{exceptional-SOD} 
using the Brown representability theorem (\cite{Neeman}~Theorem~4.1):
a triangulated functor between triangulated categories has a right adjoint functor 
if the source category is compactly generated and has all small coproducts (arbitrary direct sums), 
and if the functor respects coproducts. 
This proof shows the power of modern technology compared 
to the down-to-earth old argument above in the same way as in \cite{Neeman}.

We consider unbounded derived categories of quasi-coherent sheaves 
$D(\text{Qcoh}(X))$ and (not necessarily finitely generated) modules $D(\text{Mod-}R)$. 
We define a functor $\tilde{\Phi}: D(\text{Mod-}R) \to D(\text{Qcoh}(X))$ by 
$\tilde{\Phi}(\bullet) = \bullet \otimes_R F_R$.
Since $D(\text{Mod-}R)$ is generated by $D^b(\text{mod-}R)$, it is compactly generated.
Since $\tilde{\Phi}$ commutes with (not necessarily finite) direct sums, it has a right adjoint functor
$\tilde{\Psi}: D(\text{Qcoh}(X)) \to D(\text{Mod-}R)$.
By adjunction, we have
\[
H^p(\tilde{\Psi}(a)) \cong \text{Hom}(R,\tilde{\Psi}(a)[p]) \cong \text{Hom}(\tilde{\Phi}(R),a[p]) 
\cong \text{Hom}(F_R,a[p])
\]
for all $p$.
We note that we do not have to define $R\text{Hom}(F_R,a)$ in this proof.
Since $F_R$ is a perfect complex with proper support, it follows that $\tilde{\Psi}$ induces a 
functor $\Psi: D^b(\text{coh}(X)) \to D^b(\text{mod-}R)$.
Let $\Phi: D^b(\text{mod-}R) \to D^b(\text{coh}(X))$ be the restriction of $\tilde{\Phi}$.

Since $\text{Hom}(F_R,F_i[p]) = 0$ for $p \ne 0$ and all $i$, we have
$\text{Hom}(F_R,F_R[p]) \cong 0$ for $p \ne 0$.
Since $\text{Hom}(F_R,F_R) \cong R$, we have
\[
H^p(\Psi(\Phi(b))) \cong \text{Hom}(R,\Psi(\Phi(b))[p]) \cong \text{Hom}(F_R,b[p] \otimes_R F_R)
\cong H^p(b).
\]
Thus the adjunction morphism $b \to \Psi(\Phi(b))$ is a quasi-isomorphism.
Therefore $\Phi$ is fully faithful, and $\Phi(D^b(\text{mod-}R))$ 
is a right admissible subcategory (\cite{BK}) with 
a semi-orthogonal decomposition as stated in the theorem, where 
$\Phi(D^b(\text{mod-}R))$ is generated by the $F_i = \Phi(R/M_i)$,
since $D^b(\text{mod-}R)$ is generated by the $R/M_i$.
\end{Rem}

We consider some examples which yield relative exceptional objects.

\begin{Expl}\label{quadric2}
Let $X$ be a singular quadric surface in $\mathbf{P}^3$ defined by an equation
$x_1x_2+x_3^2=0$.

Let $P = [1:0:0:0] \in X$ be the vertex.
Then we have a projection $p: X \setminus \{P\} \to \mathbf{P}^1$.
We denote by $\mathcal{O}_X(a)$ the reflexive hull of the invertible sheaf 
$p^*\mathcal{O}_{\mathbf{P}^1}(a)$ for any integer $a$.
$\mathcal{O}_X(2)$ is an invertible sheaf coming from a hyperplane section in $\mathbf{P}^3$,
and we have $\mathcal{O}_X(K_X) \cong \mathcal{O}_X(-4)$.
By the vanishing theorem (\cite{KMM}~Theorem~1.2.5), 
we have $H^p(X,\mathcal{O}_X(a)) = 0$ for $p > 0$ if $a \ge -3$.

Let $F = \mathcal{O}_X(-1)$.
We define an extension
$0 \to F \to G \to F \to 0$ by the following commutative diagram:
\[
\begin{CD}
0 @>>> \mathcal{O}_X(-1) @>>> G @>>> \mathcal{O}_X(-1) @>>> 0\\
@. @V=VV @VVV @VVV @. \\
0 @>>> \mathcal{O}_X(-1) @>>> \mathcal{O}_X^2 @>>> \mathcal{O}_X(1) @>>> 0
\end{CD}
\]
where the right vertical arrow is obtained from an inclusion $\mathcal{O}_X(-2) \to \mathcal{O}_X$
whose cokernel is supported in the smooth locus, 
and the sequence in the second row is exact since $\mathcal{O}_X(D) \cong \mathcal{O}_X(1)$ 
for $D = \{x_1=x_3=0\}$ is generated by global sections $\{1,x_3/x_1\}$. 
It is induced from the following exact sequence on $\mathbf{P}^1$
\[
0 \to \mathcal{O}_{\mathbf{P}^1}(-1) \to \mathcal{O}_{\mathbf{P}^1}^2 \to 
\mathcal{O}_{\mathbf{P}^1}(1) \to 0.
\]
We note that $G$ is a locally free sheaf, hence the extension is non-trivial. 
Moreover there is no more local extension of $G$ by $F$.
Thus the dimension of the local extension at $P$ is $\dim H^0(X,\mathcal{E}xt^1(F,F)) = 1$.

We will prove that there is no more non-trivial extension, 
$G$ is a versal $1$-pointed NC deformation of $F$, 
and that $G$ is a relative exceptional object.
Since $G$ is locally free, it is sufficient to prove that 
$H^p(X, \mathcal{H}om(G,F)) = 0$ for $p > 0$.
We have an exact sequence
\[
0 \to \mathcal{O}_X \to \mathcal{H}om(G,F) \to \mathcal{O}_X \to \mathcal{E}xt^1(F,F) \to 0
\]
where we have $\mathcal{H}om(F,F) \cong \mathcal{O}_X$ because $F$ is a reflexive sheaf of rank $1$.
Let $H = \text{Ker}(\mathcal{O}_X \to \mathcal{E}xt^1(F,F))$.
Then we have $H^p(X,\mathcal{O}_X) = H^p(X,H) = 0$, hence $H^p(X, \mathcal{H}om(G,F)) = 0$ for $p > 0$. 

The base ring of the deformation $G$ is $R = \Bbbk[t]/(t^2)$, 
and $G$ is a relative exceptional object over $R$.
$D^b(\text{coh}(X))$ is generated by $\mathcal{O}_X(a)$ for $-3 \le a \le 0$ (\cite{stacks}~\S 5).
But we have an exact sequence $0 \to \mathcal{O}_X(-3) \to \mathcal{O}_X(-2)^2 \to \mathcal{O}_X(-1) \to 0$.
Hence it is generated by $\mathcal{O}_X(a)$ for $-2 \le a \le 0$.
Therefore we have a full collection of relative exceptional objects $(\mathcal{O}_X(-2), G, \mathcal{O}_X)$, and 
an equivalence  
\[
D^b(\text{coh}(X)) \cong \langle D^b(\Bbbk), D^b(\Bbbk[t]/(t^2)), D^b(\Bbbk) \rangle.
\]
This is a special case of \cite{Kuznetsov}.
But the expressions of the rings seem different, because such expressions are not unique.
The algebra given in loc. cit. seems to be Morita equivalent to $\Bbbk[t]/(t^2)$.
\end{Expl}

\begin{Expl}\label{quadric3}
Let $X$ be a singular quadric hypersurface in $\mathbf{P}^4$ defined by an equation
$x_1x_2+x_3x_4=0$.
It is a cone over $\mathbf{P}^1 \times \mathbf{P}^1$.

Let $P = [1:0:0:0:0] \in X$ be a vertex, and
$p: X \setminus \{P\} \to \mathbf{P}^1 \times \mathbf{P}^1$ a projection. 
We denote by $\mathcal{O}_X(a,b)$ the reflexive hull of an invertible sheaf 
$p^*\mathcal{O}_{\mathbf{P}^1 \times \mathbf{P}^1}(a,b)$ for any $a,b$.
$\mathcal{O}_X(1,1)$ is an invertible sheaf coming from a hyperplane section in $\mathbf{P}^4$,
and we have $\mathcal{O}_X(K_X) \cong \mathcal{O}_X(-3,-3)$.
By the vanishing theorem, we have $H^p(X,\mathcal{O}_X(a,b)) = 0$ for $p > 0$ if $a,b \ge -2$.

Indeed there is a small resolution $f: Y \to X$, a special case of a crepant $\mathbf{Q}$-factorialization, 
and an invertible sheaf $\mathcal{O}_Y(a,b)$ 
such that $\mathcal{O}_Y(a,b) \otimes \mathcal{O}_Y(-K_Y)$ is nef and big and that 
$Rf_*\mathcal{O}_Y(a,b) \cong \mathcal{O}_X(a,b)$.
Thererfore the vanishing follows from \cite{KMM}~Theorem 1.2.3.

Let $F_1 = \mathcal{O}_X(0,-1)$ and $F_2 = \mathcal{O}_X(-1,0)$.
We define an extension
$0 \to F_2 \to G_1 \to F_1 \to 0$ by the following commutative diagram:
\[
\begin{CD}
0 @>>> \mathcal{O}_X(-1,0) @>>> G_1 @>>> \mathcal{O}_X(0,-1) @>>> 0\\
@. @V=VV @VVV @VVV @. \\
0 @>>> \mathcal{O}_X(-1,0) @>>> \mathcal{O}_X^2 @>>> \mathcal{O}_X(1,0) @>>> 0
\end{CD}
\]
where the right vertical arrow is obtained from an inclusion $\mathcal{O}_X(-1,-1) \to \mathcal{O}_X$, 
and the sequence in the second row is exact since $\mathcal{O}_X(D) \cong \mathcal{O}_X(1,0)$ 
for $D = \{x_1=x_3=0\}$ is generated by global sections $\{1,x_4/x_1\}$, where we note that 
$x_4/x_1 = -x_2/x_3$.
It is induced from the following exact sequence on $\mathbf{P}^1 \times \mathbf{P}^1$
\[
0 \to \mathcal{O}_{\mathbf{P}^1 \times \mathbf{P}^1}(-1,0) \to 
\mathcal{O}_{\mathbf{P}^1 \times \mathbf{P}^1}^2 \to 
\mathcal{O}_{\mathbf{P}^1 \times \mathbf{P}^1}(1,0) \to 0.
\]
We note that $G_1$ is a locally free sheaf, hence the extension is non-trivial. 
In a similar way, we construct an extension 
$0 \to F_1 \to G_2 \to F_2 \to 0$ with $G_2$ locally free. 

We will prove that $H^p(X, \mathcal{H}om(G_i,F_j)) = 0$ for $p > 0$ and for all $i,j$.
We may assume that $i = 1$.
We have an exact sequence
\[
0 \to \mathcal{H}om(F_1,F_2) \to \mathcal{H}om(G_1,F_2) \to \mathcal{H}om(F_2,F_2) 
\to \mathcal{E}xt^1(F_1,F_2) \to 0.
\]
Let $H_1 = \text{Ker}(\mathcal{H}om(F_2,F_2) \to \mathcal{E}xt^1(F_1,F_2))$.
Since $\dim H^0(X,\mathcal{E}xt^1(F_1,F_2)) = 1$, we have 
$H^p(X,H_1) = 0$ for $p \ge 0$, hence
$H^p(X, \mathcal{H}om(G_1,F_2)) = 0$ for $p \ge 0$.

On the other hand, the natural homomorphism
$\mathcal{O}_X(1,0) \otimes \mathcal{O}_X(0,-1) \to \mathcal{O}_X(1,-1)$
is surjective, because so is 
$\mathcal{O}_X(1,0) \otimes \mathcal{O}_X(1,0) \to \mathcal{O}_X(2,0)$.
Since there is an exact sequence
\[
0 \to \mathcal{O}_X(0,1) \to G_1^* \to \mathcal{O}_X(1,0) \to 0
\]
$\mathcal{H}om(G_1,F_1) \to \mathcal{O}_X(1,-1)$ is also surjective, and we have an exact sequence
\[
0 \to \mathcal{H}om(F_1,F_1) \to \mathcal{H}om(G_1,F_1) \to \mathcal{H}om(F_2,F_1) \to 0.
\]
Hence $H^p(X, \mathcal{H}om(G_1,F_1)) = 0$ for $p > 0$.

$G_1 \oplus G_2$ is a versal $2$-pointed NC deformation of $F_1 \oplus F_2$
over 
\[
R = \left( \begin{matrix} 
\Bbbk & \Bbbk t \\
\Bbbk t & \Bbbk
\end{matrix} \right) \mod t^2.
\]
$G_1 \oplus G_2$ is a $2$-pointed relative exceptional object over $R$.
We note that $G_1$ and $G_2$ are exceptional objects, but they do not form an exceptional collection,
though there is a semi-orthogonal decomposition with their right orthogonal complement.

Let $f': Y' \to X$ be the blowing up at the vertex, and $E$ the exceptional divisor.
Then there is a $\mathbf{P}^1$-bundle structure $p': Y' \to \mathbf{P}^1 \times \mathbf{P}^1$.
Let $\mathcal{O}_{Y'}(a,b) = (p')^*\mathcal{O}_{\mathbf{P}^1 \times \mathbf{P}^1}(a,b)$.
Since a relative hyperplane class of the bundle $p'$ is given by 
$\mathcal{O}_{Y'}(E) \cong \mathcal{O}_{Y'}(-1,-1)$, 
we deduce that 
$D^b(\text{coh}(Y'))$ is generated by the $\mathcal{O}_{Y'}(a,b)$ for
\[
(a,b) = (-2,-2), (-2,-1), (-1,-2), (-1,-1), (-1,0), (0,-1),(0,0)
\]
by \cite{Orlov}.
Therefore $D^b(\text{coh}(Y))$ and $D^b(\text{coh}(X))$ are also generated 
by the $\mathcal{O}_Y(a,b)$ and the $\mathcal{O}_X(a,b)$ for such $(a,b)$'s, respectively.

We have exact sequences
\[
\begin{split}
&0 \to \mathcal{O}_X(-1,-2) \to \mathcal{O}_X(-1,-1)^2 \to \mathcal{O}_X(-1,0) \to 0 \\
&0 \to \mathcal{O}_X(-2,-1) \to \mathcal{O}_X(-1,-1)^2 \to \mathcal{O}_X(0,-1) \to 0.
\end{split}
\] 
Thus $D^b(\text{coh}(X))$ is generated by 
the $\mathcal{O}_X(a,b)$ for 
\[
(a,b) = (-2,-2), (-1,-1), (-1,0), (0,-1),(0,0).
\]

Therefore we have a full collection of relative exceptional objects 
\[
(\mathcal{O}_X(-2,-2), \mathcal{O}_X(-1,-1), G, \mathcal{O}_X)
\]
for $G = G_1 \oplus G_2$, and 
an equivalence  
\[
D^b(\text{coh}(X)) \cong \langle D^b(\Bbbk),D^b(\Bbbk),D^b(R),D^b(\Bbbk) \rangle.
\]
This is a special case of \cite{Kuznetsov}.
But the expressions of the rings seem different, because such expressions are not unique.
The algebra given in loc. cit. seems to be Morita equivalent to $R$.
\end{Expl}

\begin{Expl}
Let $X = \mathbf{P}(1,1,d)$ be the cone over a rational normal curve of degree $d$.
We have reflexive sheaves of rank one $\mathcal{O}_X(a)$ for integers $a$, and 
$\mathcal{O}_X(K_X) \cong \mathcal{O}_X(-d-2)$.
We consider NC deformations of a sheaf $F = \mathcal{O}_X(-1)$.

Since $\dim H^0(X,\mathcal{O}_X(d-1)) = d$, we have an exact sequence
\[
0 \to \mathcal{O}_X(-1)^{d-1} \to \mathcal{O}_X^d \to \mathcal{O}_X(d-1) \to 0.
\]
This is induced from the following exact sequence on $\mathbf{P}^1$
\[
0 \to \mathcal{O}_{\mathbf{P}^1}(-1)^{d-1} \to \mathcal{O}_{\mathbf{P}^1}^d \to 
\mathcal{O}_{\mathbf{P}^1}(d-1) \to 0.
\]
Let $Z \in \vert \mathcal{O}_X(d) \vert$ be the smooth curve at infinity.
Then we have an exact sequence
\[
0 \to \mathcal{O}_X(-1) \to \mathcal{O}_X(d-1) \to \mathcal{O}_Z(d-1) \to 0.
\]
Since $\dim H^0(Z,\mathcal{O}_Z(d-1)) = d$, there is a surjective homomorphism
$\mathcal{O}_X^d \to \mathcal{O}_Z(d-1)$.
Let $G$ be the kernel.
Then $G$ is a locally free sheaf of rank $d$ on $X$.
Thus we have the following commutative diagram
\[
\begin{CD}
@. @. 0 @. 0 @. \\
@. @. @VVV @VVV @. \\
0 @>>> \mathcal{O}_X(-1)^{d-1} @>>> G @>>> \mathcal{O}_X(-1) @>>> 0 \\
@. @V=VV @VVV @VVV @. \\
0 @>>> \mathcal{O}_X(-1)^{d-1} @>>> \mathcal{O}_X^d @>>> \mathcal{O}_X(d-1) @>>> 0 \\
@. @. @VVV @VVV @. \\
@. @. \mathcal{O}_Z(d-1) @>=>> \mathcal{O}_Z(d-1) @. \\
@. @. @VVV @VVV @. \\
@. @. 0 @. 0 @. 
\end{CD}
\]
where the first horizontal sequence is exact because all other sequences are exact.  
Thus $G$ is an NC deformation of $F= \mathcal{O}_X(-1)$
over $R = \Bbbk[t_1,\dots,t_{d-1}]/(t_1,\dots,t_{d-1})^2$, and 
$\dim H^0(X,\mathcal{E}xt^1(F,F)) = d-1$.
We have an exact sequence
\[
0 \to \mathcal{O}_X \to \mathcal{H}om(G,F) \to \mathcal{O}_X^{d-1} \to \mathcal{E}xt^1(F,F) \to 0.
\]
Hence $H^p(X, \mathcal{H}om(G,F)) = 0$ for $p > 0$ as in the above example.
Therefore $G$ is a versal NC deformation of $F= \mathcal{O}_X(-1)$, and 
$G$ is a relative exceptional object over $R$.

By the vanishing theorem, we have $H^p(X,\mathcal{O}_X(a)) = 0$ for $p > 0$ and $a \ge -d-1$.
By \cite{stacks}, $D^b(\text{coh}(X))$ is generated by the $\mathcal{O}_X(a)$ for $-d-1 \le a \le 0$.
But there are exact sequences
$0 \to \mathcal{O}_X(-d-1)^{d-i} \to \mathcal{O}_X(-d)^{d-i+1} \to \mathcal{O}_X(-i) \to 0$
for $1 \le i \le d-1$, hence it is generated only by $\mathcal{O}_X(a)$ for $a = -d,-1,0$.
We have a full collection of relative exceptional objects 
$(\mathcal{O}_X(-d), G, \mathcal{O}_X)$, and 
an equivalence  
\[
D^b(\text{coh}(X)) \cong \langle D^b(\Bbbk),D^b(R),D^b(\Bbbk) \rangle.
\]
\end{Expl}

\begin{Expl}
Let $X = \mathbf{P}(1,2,3)$ be a weighted projective surface.
$X$ has two singular points $P$ and $Q$ which are Du Val singularities of types 
$A_1$ and $A_2$, respectively.
We have $\mathcal{O}_X(K_X) \cong \mathcal{O}_X(-6)$, hence
$H^0(X,\mathcal{O}_X(-i)) = 0$ for $0 < i$, and 
$H^p(X,\mathcal{O}_X(-i)) = 0$ for $p > 0$ and $i < 6$.
We write $F_i = \mathcal{O}_X(-i)$.

\vskip 1pc

First we consider NC deformations of a reflexive sheaf of rank one $F_1 = \mathcal{O}_X(-1)$.
In this example, the non-commutative deformations of $F_1$ do not terminate after finite steps,
though commutative deformations do.

Let us calculate local extensions of $F_1$ at the singular points.
The singular point $P$ is a quotient singularity of type 
$\frac 12(1,1)$.
Then it is already known by the previous example that 
$\mathcal{E}xt^1(F_1,F_1)_P \cong \Bbbk$, and the versal NC local deformation 
has the base ring $\Bbbk[s]/(s^2)$.

The singular point $Q$ is a quotient singularity of type $\frac 13(1,2)$.
Let a cyclic group $\mathbf{Z}/(3)$ act on $\Bbbk[x,y]$ with weights $(1,2)$, and
let $A \subset \Bbbk[x,y]$ be the invariant subring.
We may assume that the sheaf $F_1$ at $Q$ is represented by the ideal $(x) \cap A = (x^3,xy)$ in $A$, 
and $F_2$ at $Q$ by $(y) \cap A = (xy,y^3)$ or $(x^2) \cap A = (x^3, x^2y^2)$.
There are exact sequences 
\[
\begin{split}
&0 \to (x) \cap A \to A \oplus ((x^2) \cap A) \to (x) \cap A \to 0 \\
&0 \to (x) \cap A \to A^2 \to (y) \cap A \to 0
\end{split}
\]
where in the first sequence, the map $A \to (x) \cap A$ is given by $1 \mapsto xy$,
the map $(x^2) \cap A = (x^3,x^2y^2) \to (x) \cap A = (x^3,xy)$ and the map $(x) \cap A \to A$ are 
natural injections, and the map $(x) \cap A \to (x^2) \cap A$ is induced from $1 \mapsto -xy$.
In the second sequence, the map $A^2 \to (y) \cap A$ is given by the generators $(xy,y^3)$, and
the map $(x) \cap A \to A^2$ is given in the following way:
the map to the first entry of $A$ is the composition of the isomorphism given by $x \mapsto -y^2$ 
and the natural injection $(x) \cap A \cong (y^2) \cap A \to A$, while the second is the natural injection 
$(x) \cap A \to A$.

We claim that the versal NC deformation of the ideal $(x) \cap A$ at $Q$ has the base ring $\Bbbk[t]/(t^3)$.
For this purpose, it is sufficient to prove that $\mathcal{E}xt^1(F_1,F_1)_Q \cong \Bbbk$.
From an exact sequence 
\[
0 \to (y) \cap A \to A^2 \to (x) \cap A \to 0
\]
we obtain an exact sequence
\[
0 \to A \to ((x) \cap A) \oplus ((y^2) \cap A) \to (y) \cap A \to \mathcal{E}xt^1(F_1,F_1)_Q \to 0
\]
where the first arrow is given by $1 \mapsto (xy,x^2y^2)$ and the second induced by $1 \mapsto -xy$ 
and a natural inclusion.
Then the cockerel of the second arrow is generated by the image of $xy$, hence $1$-dimensional.

\vskip 1pc

We prove that a sequence of iterated non-trivial extensions $G_1^n$ of $F_1$ 
never become locally free for any $n$ by induction on $n$.
If $G_1^n$ is not locally free at $P$, then we take an extension
\[
0 \to F_1 \to G_1^{n+1} \to G_1^n \to 0
\]
which induces a non-trivial extension at $P$ and a trivial extension at $Q$.
Then $G_1^{n+1}$ is not locally free at $Q$.
The same argument works if we interchange $P$ and $Q$. 
Therefore we proved our assertion. 

Indeed we could prove that the versal NC deformation has a base ring 
$\Bbbk\langle s,t \rangle/(s^2,t^3)$, which is infinite dimensional,
while its maximal abelian quotient $\Bbbk[s,t]/(s^2,t^3)$ is finite dimensional. 

\vskip 1pc

Next we consider $1$-pointed NC deformations of reflexive sheaves $F_2 = \mathcal{O}_X(-2)$ and
$F_3 = \mathcal{O}_X(-3)$.
Then the results are better, because $F_2$ (resp. $F_3$) is locally free at $P$ (resp. $Q$).
We claim the following: $F_2$ (resp. $F_3$) has a versal NC deformation 
$G_2$ (resp. $G_3$) over $\Bbbk[t]/(t^3)$ (resp. $\Bbbk[s]/(s^2)$) which is a locally 
free sheaf of rank $3$ (resp. $2$), and they are relative exceptional objects.

As for $F_3$, we can prove that $\text{Ext}^p(G_3,F_3) = 0$ for $p > 0$ in the same way as in 
Example~\ref{quadric2}.
We consider $F_2$.
Since $H^p(X, \mathcal{H}om(F_2,F_2)) = 0$ for $p > 0$, we have
$\text{Ext}^1(F_2,F_2) \cong H^0(X,\mathcal{E}xt^1(F_2,F_2)) \cong \Bbbk$, since 
we already proved $\mathcal{E}xt^1(F_1,F_1))_Q \cong \Bbbk$.
Therefore we have a non-trivial extension $0 \to F_2 \to G'_2 \to F_2 \to 0$.
We consider an exact sequence
\[
0 \to \mathcal{H}om(F_2,F_2) \to \mathcal{H}om(G'_2,F_2) \to \mathcal{H}om(F_2,F_2) 
\to \mathcal{E}xt^1(F_2,F_2).
\]
Since $G'_2$ is a non-trivial extension also locally at $Q$, the last arrow is non-zero.
Let $H$ be the kernel of the last arrow.
Then we have $H^p(X,H) = 0$ for $p > 0$, hence $H^p(X,\mathcal{H}om(G'_2,F_2)) = 0$ for $p > 0$.
Therefore $\text{Ext}^1(G'_2,F_2) \cong H^0(X,\mathcal{E}xt^1(G'_2,F_2)) \cong \Bbbk$, and 
we have a non-trivial extension $0 \to F_2 \to G_2 \to G'_2 \to 0$.
In the same way, we obtain $H^p(X,\mathcal{H}om(G_2,F_2)) = 0$ for $p > 0$.
Since $G_2$ is locally free, this is the desired result. 

\vskip 1pc

We claim that the category $D^b(\text{coh}(X))$ is generated by the reflexive sheaves
$\mathcal{O}_X(-m)$ for $m = 0,2,3$.
It is already known that it is generated by the $\mathcal{O}_X(-m)$ for $0 \le m \le 5$ (\cite{stacks}~\S 5).
We check that $\mathcal{O}_X(-m)$ for $m = 1,4,5$ are generated by others.

Let $D_1$ be the coordinate divisor such that $\mathcal{O}_X(D_1) \cong \mathcal{O}_X(1)$.
We consider exact sequences 
\[
0 \to \mathcal{O}_X(-m) \to \mathcal{O}_X \to \mathcal{O}_{mD_1} \to 0
\]
for $1 \le m \le 5$, where the third terms are defined as cokernels.
Since $H^p(X,\mathcal{O}_X) \cong \Bbbk$ for $p=0$ and $\cong 0$ for $p > 0$, and 
$H^p(X,\mathcal{O}_X(-m)) \cong 0$ for all $p$ and $1 \le m \le 5$, we deduce that 
$H^0(X,\mathcal{O}_{mD_1}) \cong \Bbbk$ and $H^1(X,\mathcal{O}_{mD_1}) \cong 0$.
Therefore we have an exact sequence
\[
0 \to \mathcal{O}_{D_1}(-1) \to \mathcal{O}_{mD_1} \to \mathcal{O}_{(m-1)D_1} \to 0
\]
for $0 < m \le 5$.
Indeed the first term is isomorphic to $\text{Coker}(i)$ in 
following commutative diagram of exact sequences:
\[
\begin{CD}
0 @>>> \mathcal{O}_X(-m) @>>> \mathcal{O}_X @>>> \mathcal{O}_{mD_1} @>>> 0 \\
@. @ViVV @V=VV @VVV @. \\
0 @>>> \mathcal{O}_X(-m+1) @>>> \mathcal{O}_X @>>> \mathcal{O}_{(m-1)D_1} @>>> 0
\end{CD}
\]
where $\text{Coker}(i)$ is an invertible sheaf on $D_1$, because
$X$ has only quotient singularities and $\text{Coker}(i)$ is locally the sheaf of invariants of the 
corresponding sheaf on the covering.
Therefore $\mathcal{O}_{D_1}(-1)$ is expressed by the $\mathcal{O}_X(-m)$ for $m = 0,2,3$, and 
so are the $\mathcal{O}_X(-m)$ for $m=1,4,5$.

\vskip 1pc

In conclusion, we have a full collection of relative exceptional objects 
$(G_3, G_2, \mathcal{O}_X)$, and 
an equivalence  
\[
D^b(\text{coh}(X)) \cong \langle D^b(\Bbbk[s]/(s^2)),D^b(\Bbbk[t]/(t^3)),D^b(\Bbbk) \rangle.
\]

By a similar method, the author expects that the following can be proved.
Let $X = \mathbf{P}(1,a,b)$ be a weighted projective plane for coprime positive integers $a,b$ with $a < b$.
We consider $1$-pointed NC deformations of $F_a = \mathcal{O}_X(-a)$ and $F_b = \mathcal{O}_X(-b)$.
We could prove that there exist versal deformations $G_a$ and $G_b$ of $F_a$ and $F_b$, respectively, 
which are locally free and relative exceptional objects.
Moreover there is a semi-orthogonal decomposition
$D^b(\text{coh}(X)) = \langle G_b,G_a,\mathcal{O}_X \rangle$.
\end{Expl}

\section{$r$-pointed relative spherical objects on Calabi-Yau threefolds}

We define relative spherical objects after \cite{Toda-twist} and \cite{Ann-Log}, and prove that 
a versal multi-pointed NC deformation of a simple collection on a Calabi-Yau 3-fold yields a 
relative spherical object if the deformations stops after finitely many non-trivial extensions 
and if one more condition holds.

\begin{Defn}
Let $X$ be a smooth projective variety of dimension $n \ge 2$, 
let $\{F_i\}_{i=1}^r$ be a simple collection in $(\text{coh}(X))$,
and let $F_R = \bigoplus_i F_{R,i}$ be an $r$-pointed NC deformation of $\{F_i\}$ over $R \in (\text{Art}_r)$.
The pair $(F_R,F)$ for $F= \bigoplus_{i=1}^r F_i$ is said to be an 
{\em $r$-pointed relative $n$-spherical object} over $R$ if the following conditions are satisfied:

\begin{enumerate}

\item There exists a permutation $\sigma$ of $r$ elements such that
\[
\text{Hom}(F_R,F_i[p]) \cong \begin{cases} R/M_i  \qquad &p = 0 \\
R/M_{\sigma(i)} \qquad &p = n \\
0 \qquad &p \ne 0,n
\end{cases}
\]
as right $R$-modules for all $i$.

\item $F \otimes \omega_X \cong F$.
\end{enumerate}
\end{Defn}

More generally, for a triangulated category with a Serre functor $S$, 
the second condition can be replaced by $S(F) \cong F[n]$. 

The following lemma shows that the base ring $R$ of an $r$-pointed relative $n$-spherical object
is a NC Gorenstein artin algebra (\cite{AR}):

\begin{Lem}
Let $(F_R,F)$ be an $r$-pointed relative spherical object over $R$.
Then $R^* = \text{Hom}_{\Bbbk}(R,\Bbbk)$ is a free right $R$-module of rank $1$.
\end{Lem}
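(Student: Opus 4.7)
The plan is to identify both $R^*$ and $R$ with a common Ext group on $X$ via Serre duality, and then pin down the right $R$-module structure. First, I would invoke Serre duality for the smooth projective variety $X$ of dimension $n$ applied to the perfect complex $F_R$: this supplies a perfect $k$-bilinear pairing
$$\text{End}_{D^b(\text{coh}(X))}(F_R) \otimes_k \text{Ext}^n(F_R, F_R \otimes \omega_X) \to k,\qquad (\phi, \psi) \mapsto \text{tr}(\psi \circ \phi).$$
Cyclicity of the trace renders this pairing $R$-balanced with respect to the natural $R$-bimodule structures on both factors; combined with the identification $R \cong \text{End}(F_R)$ (which itself follows from the flat left $R$-action on $F_R$ together with a dimension count via the iterated-extension filtration), this provides an isomorphism of right $R$-modules $R^* \cong \text{Ext}^n(F_R, F_R \otimes \omega_X)$.

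Next, I would compute the right-hand side via filtrations. By Theorem~\ref{flat}, $F_R$ admits a filtration $0 = F_R^{(0)} \subset F_R^{(1)} \subset \cdots \subset F_R^{(N)} = F_R$ whose successive quotients $F_R^{(m)}/F_R^{(m-1)}$ are among the $F_i$, say $\cong F_{j(m)}$. Because $\{F_i\}$ is a simple collection, condition (2) forces $F_i \otimes \omega_X \cong F_{\tau(i)}$ for a permutation $\tau$ of $\{1,\dots,r\}$, so $F_R \otimes \omega_X$ inherits an analogous filtration with graded pieces $F_{\tau(j(m))}$. Applying $\text{Ext}^*(F_R,-)$ to the associated short exact sequences and using the spherical vanishing $\text{Ext}^p(F_R, F_j) = 0$ for $p \neq 0, n$ from condition (1), an induction on $m$ yields $\text{Ext}^p(F_R, F_R \otimes \omega_X) = 0$ for $p \neq 0, n$ and presents $\text{Ext}^n(F_R, F_R \otimes \omega_X)$ as an iterated extension of the right $R$-modules $R/M_{\sigma\tau(j(m))}$. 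The parallel computation applied to $F_R$ in place of $F_R \otimes \omega_X$ shows that $R$, as a right $R$-module, carries a filtration with graded pieces $R/M_{j(m)}$, so $R$ and $R^*$ have the same $k$-dimension and (since $\sigma \tau$ is a permutation) the same multiset of composition factors.

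The final step, which I expect to be the main obstacle, is to upgrade this numerical agreement to a genuine isomorphism of right $R$-modules. My plan is to exploit the Serre pairing itself as a matching device: it is non-degenerate and $R$-balanced, so it descends to a perfect pairing between the associated graded pieces of the filtrations on $R$ and $R^*$, matching $R/M_{j(m)}$ on the source side with $R/M_{\sigma\tau(j(m))}$ on the target side. Given such matching, one chooses an element $\eta \in R^*$ whose image in $R^*/R^* M$ generates $R^*/R^* M \cong R/M$ as a right $R/M$-module, and verifies that right multiplication by $\eta$ defines a right $R$-linear map $R \to R^*$, $r \mapsto \eta \cdot r$, which is surjective by Nakayama and therefore an isomorphism by the dimension equality. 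The delicate part is the existence of a suitable $\eta$ with the required generating property, which I would establish by combining non-degeneracy of the Serre pairing on associated gradeds with the filtration compatibility just established.
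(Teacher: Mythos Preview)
Your first two steps are sound: Serre duality does give a right $R$-module isomorphism $R^* \cong \text{Ext}^n(F_R, F_R \otimes \omega_X)$, and the filtration argument together with the vanishing in condition (1) correctly computes the composition factors of both sides. But this is precisely where the argument stalls. Note that for \emph{any} finite-dimensional $k$-algebra $R$ with $R/M \cong k^r$, the right $R$-modules $R$ and $R^*$ automatically have the same composition factors: the multiplicity of the simple $S_j$ in either one equals $\dim e_jR$. So your filtration computation, while correct, has not yet used the spherical hypothesis in an essential way, and ``same composition factors'' is very far from ``isomorphic'' (think of $k[t]/(t^2)$ versus $k \oplus k$ over $k[t]/(t^2)$).

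The crux is your last step, and there the plan does not close. To run Nakayama you need $R^*/R^*M$ to be cyclic over $R/M \cong k^r$, i.e.\ $\dim_k R^*/R^*M = r$. But $R^*/R^*M$ is canonically the $k$-dual of the left socle $\{s \in R : Ms = 0\}$, so what you actually need is $\dim_k \text{soc}(R) = r$; this is exactly the statement that $R$ is Frobenius, which is equivalent to the lemma itself. Your proposed mechanism (``non-degeneracy of the Serre pairing on associated gradeds with the filtration compatibility'') does not supply this: the filtrations on $R$ and on $\text{Ext}^n(F_R,F_R\otimes\omega_X)$ coming from the composition series of $F_R$ are not dual filtrations under the Serre pairing, so there is no induced perfect pairing on graded pieces that would force the socle to be small.

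The paper closes exactly this gap by a direct, short argument that bypasses filtrations entirely. From condition (1) and Serre duality one has $\dim\text{Hom}(F_i,F_R) = \dim\text{Ext}^n(F_R,F_i\otimes\omega_X) = 1$ for every $i$. This immediately gives concrete socle elements $s_i \in R$, defined as the compositions $F_R \twoheadrightarrow F_i \hookrightarrow F_R$, and shows that each isotypic piece of the socle is one-dimensional. One then picks $\phi \in R^*$ with $\phi(s_i) = 1$ for all $i$ and checks that its right annihilator is zero: any nonzero annihilator would contain a socle element, hence some $s_i$, contradicting $\phi(s_i) \neq 0$. Since $\dim_k R^* = \dim_k R < \infty$, the cyclic map $r \mapsto \phi r$ is then an isomorphism. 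This is the missing idea: use the spherical hypothesis to pin down the socle of $R$ geometrically, rather than trying to extract module-theoretic information from composition factors alone.
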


\begin{proof}
Since $\dim \text{Hom}(F_R,F_i) = \dim \text{Hom}(F_i,F_R) = 1$ by the Serre duality, we can define  
$s_i \in R = \text{Hom}(F_R,F_R)$ 
as a composition of non-zero homomorphisms $F_R \to F_i \to F_R$ up to a constant.
Let $\phi \in R^*$ be a homomorphism $R \to \Bbbk$ such that $\phi(s_i)=1$ for all $i$.

We will prove that $\phi$ generates $R^*$ as a right $R$-module.
Let $I = \{s \in R \mid \phi s = 0\}$ be the annihilator ideal of $\phi$.
If $I \ne 0$, then there is a socle; there exist $i$ and $0 \ne s \in I$ 
such that $M_is = 0$ for the $i$-th maximal ideal $M_i$ of $R$.
We know that such non-zero $s \in R$ that $M_is = 0$ is unique up to a constant for a fixed $i$, 
because $\dim \text{Hom}(F_i,F_R) = 1$.
It follows that $s = cs_i$ for $0 \ne c \in \Bbbk$.
Then $0 = \phi s(1) = \phi(cs_i) =  c$, a contradiction.
Hence $I = 0$.
Since $\dim_{\Bbbk} R < \infty$, we conclude the proof.
\end{proof}

The conclusion of the lemma is equivalent to saying that the socle of $R$ is isomorphic to 
$\bigoplus R/M_i$.

We have the duality theorem:

\begin{Cor}\label{AR}
Let $M$ be a finitely generated right $R$-module.
Then there is a natural isomorphism
\[
\text{Hom}_{\Bbbk}(M,\Bbbk) \cong \text{Hom}_R(M,R^*)
\]
with $R^* \cong R$.
\end{Cor}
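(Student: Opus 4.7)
The plan is to exploit the isomorphism $R^* \cong R$ of right $R$-modules furnished by the preceding lemma: the distinguished functional $\phi \in R^*$ constructed there is a Frobenius form, and the right $R$-linear map $s \mapsto \phi \cdot s$, where $(\phi \cdot s)(r) := \phi(sr)$, gives an isomorphism $R \xrightarrow{\sim} R^*$ of right $R$-modules.

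Given this, the cleanest route is via the tensor-Hom adjunction. For any right $R$-module $M$, using $M \otimes_R R \cong M$ together with the adjunction applied with $R$ regarded as a left $R$-module,
\[
\text{Hom}_k(M,k) \cong \text{Hom}_k(M \otimes_R R, k) \cong \text{Hom}_R(M, \text{Hom}_k(R,k)) = \text{Hom}_R(M, R^*),
\]
where $R^*$ carries its natural right $R$-module structure $(\alpha \cdot r)(s) = \alpha(rs)$. Composing with the isomorphism $R^* \cong R$ from the lemma yields the claimed natural isomorphism, and unwinding the construction shows that it sends $f \in \text{Hom}_R(M,R)$ to $\phi \circ f \in \text{Hom}_k(M,k)$. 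Naturality in $M$ is manifest.

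A more hands-on alternative is to verify directly that $\alpha_M: \text{Hom}_R(M,R) \to \text{Hom}_k(M,k)$, $f \mapsto \phi \circ f$, is bijective. Injectivity is a one-line consequence of non-degeneracy: if $\phi(f(m)) = 0$ for all $m$, then for all $m,r$ one has $\phi(f(m)r) = \phi(f(mr)) = 0$, so $f(m) = 0$. For surjectivity, the case $M = R$ is precisely the preceding lemma; this extends to free modules $R^a$ by additivity and then to an arbitrary finitely generated $M$ via the five-lemma applied to a free presentation $R^b \to R^a \to M \to 0$, using left exactness of $\text{Hom}_R(-,R)$ and exactness of $\text{Hom}_k(-,k)$.

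No substantial obstacle is expected. The only point requiring care is the bookkeeping of left versus right actions: one must verify that the generator $\phi$ really does give a \emph{right} $R$-linear isomorphism $R \cong R^*$, and that the tensor-Hom adjunction is applied with compatible handedness so that the resulting identification is the one sending $f$ to $\phi \circ f$.
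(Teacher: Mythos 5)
Your main route is essentially the paper's own proof: the explicit correspondence $f \leftrightarrow g$ with $g(m)(r)=f(mr)$ that the paper writes out is exactly your tensor--Hom adjunction giving $\text{Hom}_k(M,k)\cong\text{Hom}_R(M,R^*)$, followed by the identification $R^*\cong R$ as right $R$-modules from the preceding lemma. Your alternative direct verification of $f\mapsto\phi\circ f$ via non-degeneracy and a free presentation is a correct extra, but the core argument coincides with the paper's.
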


\begin{proof}
We define a natural isomorphism
\[
\text{Hom}_{\Bbbk}(M,\Bbbk) \cong \text{Hom}_R(M,\text{Hom}_{\Bbbk}(R,\Bbbk))
\]
as follows.
For a $\Bbbk$-homomorphism $f: M \to \Bbbk$, we define a right $R$-homomorphism 
$g: M \to \text{Hom}_{\Bbbk}(R,\Bbbk)$ by $g(m,r) = f(mr)$.
$g$ is compatible with the right $R$-action: $g(ms,r) = g(m,sr)$.
For $g: M \to \text{Hom}_{\Bbbk}(R,\Bbbk)$, we define $f(m) = g(m,1)$.
They are inverses each other.
\end{proof}

We simply write
$D(R) = D^b(\text{mod-}R)$, $D(X) = D^b(\text{coh}(X))$, etc. in the following. 
We consider the following diagram of \lq\lq spaces''
\[
\begin{CD}
[R] @<{p_1}<< [R] \times X @>{p_2}>> X
\end{CD}
\] 
where $[R]$ is the imaginary \lq\lq space'' corresponding to the ring $R$ 
and $[R] = \text{Spec}(R)$ if $R$ is commutative.

We define 
\[
\begin{split}
&p_1^*(\bullet) = \bullet \otimes_{\Bbbk} \mathcal{O}_X, 
p_1^!(\bullet) = \bullet \otimes_{\Bbbk} \omega_X[n], 
p_{1*}(\bullet) = R\Gamma(X,\bullet), \\
&p_2^*(\bullet) = R \otimes_{\Bbbk} \bullet, 
p_2^!(\bullet) = \text{Hom}_{\Bbbk}(R,\bullet), 
p_{2*}(\bullet) = \bullet. 
\end{split}
\]
We define an exact functor
$\Phi: D(R) \to D(X)$ by 
\[
\Phi(a) = a \otimes_R F_R = p_{2*}(p_1^*a \otimes_{R \otimes \mathcal{O}_X} F_R).
\]
We recall that the derived dual $F_R^*$ is defined in the proof of Theorem 5.2.

\begin{Lem}
The functor $\Phi$ has right and left adjoint functors $\Psi_R: D(X) \to D(R)$ and $\Psi_L: D(X) \to D(R)$
defined as follows:
\[ 
\begin{split}
&\Psi_R(b) = R\text{Hom}_{\mathcal{O}_X}(F_R,b) = 
p_{1*}(F_R^* \otimes_{R \otimes \mathcal{O}_X} p_2^!b) \\
&\Psi_L(b) = p_{1*}(F_R^* \otimes p_2^*\omega_X[n] \otimes p_2^*b) \cong \Psi_R(b)[n].
\end{split}
\]
\end{Lem}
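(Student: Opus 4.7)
The plan is to verify the two adjunctions separately and then deduce the shift identity $\Psi_L(b) \cong \Psi_R(b)[n]$ from the spherical hypothesis. For the right adjoint I would use tensor--hom adjunction: since $F_R$ is an $(R,\mathcal{O}_X)$-bimodule that is flat over $R$ (by the definition of an NC deformation), the standard isomorphism
\[
R\mathrm{Hom}_X(a \otimes_R F_R,\, b) \;\cong\; R\mathrm{Hom}_R\bigl(a,\, R\mathrm{Hom}_X(F_R, b)\bigr)
\]
holds for all $a \in D(R)$ and $b \in D(X)$. Since $X$ is smooth projective, $F_R$ is perfect; the locally free $R \otimes_k \mathcal{O}_X$-resolution $P_\bullet \to F_R$ from the proof of Theorem 5.2 identifies $R\mathrm{Hom}_X(F_R, b)$ with $p_{1*}(F_R^* \otimes p_2^*b)$, equipped with the right $R$-action inherited from $F_R$; this is $\Psi_R(b)$.

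For the left adjoint I would apply Grothendieck duality to the projection $p_1 : [R] \times X \to [R]$. Reading $[R]$ as an NC ``point'' in the spirit of the paper, $p_1$ is the relative version of the proper smooth morphism $X \to \mathrm{pt}$ and has relative dimension $n$ and relative dualizing complex $\omega_{p_1} \cong p_2^*\omega_X$. The chain
\[
\begin{split}
R\mathrm{Hom}_X(b, \Phi(a))
&\cong R\mathrm{Hom}_{[R]\times X}(p_2^*b,\; p_1^*a \otimes_R F_R) \\
&\cong R\mathrm{Hom}_{[R]\times X}(p_2^*b \otimes F_R^*,\; p_1^*a) \\
&\cong R\mathrm{Hom}_R\bigl(p_{1*}(p_2^*b \otimes F_R^* \otimes p_2^*\omega_X[n]),\; a\bigr) \\
&= R\mathrm{Hom}_R(\Psi_L(b), a)
\end{split}
\]
uses, in order, the adjunction $p_2^* \dashv p_{2*}$, tensor--hom for the perfect complex $F_R$, and the left adjoint of $p_1^*$ furnished by Grothendieck duality, namely $M \mapsto Rp_{1*}(M \otimes \omega_{p_1}[n])$. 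This exhibits $\Psi_L$ as the left adjoint of $\Phi$.

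Finally, condition (2) of the spherical definition gives $F \otimes \omega_X \cong F$, so $F_R \otimes \omega_X$ is again an NC deformation of $F$ over the same ring $R$; versality then forces $F_R \otimes \omega_X \cong F_R$, whence $F_R^* \otimes p_2^*\omega_X \cong F_R^*$ and
\[
\Psi_L(b) \;=\; p_{1*}\bigl(F_R^* \otimes p_2^*\omega_X \otimes p_2^*b\bigr)[n] \;\cong\; p_{1*}(F_R^* \otimes p_2^*b)[n] \;=\; \Psi_R(b)[n].
\]

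The main obstacle is making the Grothendieck duality step for $p_1$ rigorous in the NC setting, since $[R]$ is not a genuine scheme and one must interpret tensor--hom and relative dualizing complexes carefully in the $(R,\mathcal{O}_X)$-bimodule category. The cleanest remedy is to abandon the formal Fourier--Mukai picture and work directly with such bimodules, using the resolution $P_\bullet$ from Theorem 5.2 to make sense of $F_R^*$ (with its right $R$-action) and to track the various $R$-module structures through ordinary Serre duality on the smooth projective $X$ applied ``fiberwise'' over $[R]$.
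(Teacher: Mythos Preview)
Your proof is correct and runs largely parallel to the paper's. The chief technical difference is in the right adjoint: the paper derives
\[
\text{Hom}_{D(X)}(\Phi(a),b) \cong \text{Hom}_{D([R]\times X)}(p_1^*a \otimes_R F_R,\, p_2^*b)
\]
from the adjunction $p_{2*} \dashv p_2^!$, invoking the Gorenstein duality $\text{Hom}_k(M,k) \cong \text{Hom}_R(M,R)$ (Corollary~\ref{AR}) to identify $p_2^!$ with $p_2^*$. Your direct bimodule tensor--hom adjunction bypasses this step entirely and is therefore a bit more elementary; the paper's route has the advantage of treating both adjoints symmetrically in the Fourier--Mukai formalism. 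For the left adjoint the two arguments coincide, reading Grothendieck duality along $p_1$ either as $p_1^!(a) = p_1^*a \otimes p_2^*\omega_X[n]$ (the paper's phrasing) or in the left-adjoint form you use.

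Your final paragraph supplies something the paper's proof omits: an argument for the identity $\Psi_L \cong \Psi_R[n]$. Your appeal to versality is the right idea, and you need not assume it separately: condition~(1) of the spherical definition gives $\dim \text{Hom}(F_R,F_i)=1$ and $\text{Ext}^1(F_R,F_i)=0$ for all $i$, so by Corollary~4.10(2) any relative spherical $F_R$ is automatically versal. One should still check that the resulting isomorphism $F_R \otimes \omega_X \cong F_R$ is $R$-linear (so that the right $R$-module structures on $\Psi_L(b)$ and $\Psi_R(b)$ match), but that falls under the careful bimodule bookkeeping you already flag in your closing remarks.
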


\begin{proof}
On the imaginary spaces, we have
\[
\begin{split}
&\text{Hom}_{D(X)}(\Phi(a),b) 
\cong \text{Hom}_{D([R] \times X)}(p_1^*a \otimes_{R \otimes \mathcal{O}_X} F_R, p_2^!b) \\
&\cong \text{Hom}_{D([R] \times X)}(p_1^*a, F_R^* \otimes_{R \otimes \mathcal{O}_X} p_2^!b)
\cong \text{Hom}_{D(R)}(a,\Psi_R(b)).
\end{split}
\]
Or we have
\[
\begin{split}
&\text{Hom}_{\mathcal{O}_X}(\Phi(a),b) 
\cong \text{Hom}_{R \otimes \mathcal{O}_X}((a \otimes_{\Bbbk} \mathcal{O}_X) 
\otimes_{R \otimes \mathcal{O}_X} F_R,  \text{Hom}_{\Bbbk}(R,b)) \\
&\cong \text{Hom}_{R \otimes \mathcal{O}_X}(a \otimes_{\Bbbk} \mathcal{O}_X, 
F_R^* \otimes_{R \otimes \mathcal{O}_X} \text{Hom}_{\Bbbk}(R,b))
\cong \text{Hom}_R(a,\Psi_R(b)).
\end{split}
\]

Since $p_1^!(a) = p_1^*(a) \otimes p_2^*\omega_X[n]$, we have
\[
\begin{split}
&\text{Hom}_{D(X)}(b,\Phi(a)) \cong \text{Hom}_{D([R] \times X)}(p_2^*b,
p_1^*a \otimes_{R \otimes \mathcal{O}_X} F_R) \\
&\cong \text{Hom}_{D([R] \times X)}((F_R^* \otimes p_2^*\omega_X[n] \otimes p_2^*b, p_1^!a)
\cong \text{Hom}_{D(R)}(\Psi_L(b),a).
\end{split}
\]
Or we have  
\[
\begin{split}
&\text{Hom}_{\mathcal{O}_X}(b,\Phi(a)) 
\cong \text{Hom}_{R \otimes \mathcal{O}_X}(R \otimes_{\Bbbk} b,
(a \otimes_{\Bbbk} \mathcal{O}_X) \otimes_{R \otimes \mathcal{O}_X} F_R) \\
&\cong \text{Hom}_{R \otimes \mathcal{O}_X}(F_R^* \otimes \omega_X[n] \otimes b, 
a \otimes_{\Bbbk} \omega_X[n]) \\
&\cong \text{Hom}_R(R\Gamma(X,F_R^* \otimes \omega_X[n] \otimes b), a)
\cong \text{Hom}_R(\Psi_L(b),a).
\end{split}
\]
where we used the Serre duality
\[
\text{Hom}_{\mathcal{O}_X}(c, \omega_X[n])
\cong \text{Hom}_{\Bbbk}(R\Gamma(X,c),\Bbbk).
\]

Since $p_2^* \cong p_2^!$ by Corollary~\ref{AR} and since 
$\omega_X \otimes F_R \cong F_R$, we obtain the last isomorphism.
\end{proof}

We note that the right adjoint functor $\Psi_R$ is already constructed in \S 5.
It can be obtained without using the derived dual $F_R^*$ by Remark~\ref{Brown},
but we need for the following Theorem~\ref{spherical functor}
the additional property that these functors are of Fourier-Mukai type, 
because these functors should be lifted to the enhancement of the triangulated categories (\cite{Ann-Log}).

We define the twist functor $T: D(X) \to D(X)$ associated to $\Phi$ to be the functor 
corresponding to the Fourier-Mukai kernel
$\text{cone}(F_R^* \boxtimes F_R \to \mathcal{O}_{\Delta_X}) \in D(X \times X)$, where 
$\Delta \subset X \times X$ is the diagonal.  
Then we have a distinguished triangle of functors
\[
T[-1] \to \Phi \circ \Psi_R \to \text{Id}_{D(X)} \to T
\]

\begin{Thm}\label{spherical functor}
The functor $\Phi: D^b(\text{mod-}R) \to D^b(\text{coh}(X))$ given by
$\Phi(a) = a \otimes_R F_R$ is a spherical functor.
In particular, the twist functor $T$ which induces the following distinguished triangles :
\[
T(a)[-1] \to R\text{Hom}(F_R,a) \otimes_R F_R \to a \to T(a) 
\]
for $a \in D^b(\text{coh}(X))$ is an auto-equivalence of $D^b(\text{coh}(X))$.
\end{Thm}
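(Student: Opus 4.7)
The plan is to verify that $\Phi$ satisfies the axioms of a spherical functor in the sense of \cite{Ann-Log}; the statement that $T$ is an auto-equivalence is then immediate from their theorem, and the triangle displayed in the statement is precisely the defining Fourier-Mukai triangle for $T$ applied to $a$. Both adjoints $\Psi_L$ and $\Psi_R$ have already been constructed in the preceding lemma, and the Calabi-Yau hypothesis $\omega_X \cong \mathcal{O}_X$ built into the spherical object definition collapses the adjunction formula to $\Psi_L \cong \Psi_R[n]$. Thus the remaining task is to analyze the cotwist $C$ defined by the distinguished triangle
\[
C \longrightarrow \mathrm{Id}_{D(R)} \longrightarrow \Psi_R\Phi \longrightarrow C[1].
\]

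First I would test $\Psi_R\Phi$ on generators. Since $R$ is finite-dimensional with nilpotent radical, every object of $D^b(\text{mod-}R)$ admits a finite filtration with simple subquotients $R/M_i$, so these $r$ simples generate the source. Using flatness of $F_R$ over $R$ (Theorem \ref{flat}) one has $R/M_i \otimes_R F_R \cong F_i$, hence
\[
\Psi_R\Phi(R/M_i) \cong R\text{Hom}(F_R, F_i),
\]
and by the spherical object hypothesis this complex has cohomology $R/M_i$ in degree $0$ and $R/M_{\sigma(i)}$ in degree $n$, vanishing elsewhere. The unit $R/M_i \to \Psi_R\Phi(R/M_i)$ is the tautological identification on $H^0$, and the defining triangle then yields $C(R/M_i) \cong R/M_{\sigma(i)}[-n-1]$. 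Because $\sigma$ is a permutation of $\{1,\dots,r\}$, the cotwist $C$ sends the generating set of simples bijectively to itself up to the common shift $[-n-1]$, and it follows that $C$ is an auto-equivalence of $D^b(\text{mod-}R)$, with quasi-inverse assembled from $\sigma^{-1}$ and the opposite shift.

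Finally I would verify the Anno-Logvinenko compatibility relating the two adjoints through the cotwist: given $\Psi_L \cong \Psi_R[n]$ and $C$ an equivalence, sphericality reduces to the canonical natural transformation $\Psi_L \to C\Psi_R[1]$ being an isomorphism. This can be tested on the generating objects $F_i$ of $D^b(\text{coh}(X))$, where both sides are explicit shift-permutations of the simples $R/M_j$ via the computation above, and after this the theorem of \cite{Ann-Log} yields that $\Phi$ is spherical and hence that $T$ is an auto-equivalence. The main obstacle I anticipate is precisely this last compatibility step: since $R$ is noncommutative, the right $R$-module structures appearing on $R\text{Hom}(F_R,F_i)$ through Serre duality on $X$ must be matched carefully with the structures on $R/M_{\sigma(i)}$ imposed by the spherical-object definition, and it is through this matching that the Nakayama-type permutation $\sigma$ enters essentially.
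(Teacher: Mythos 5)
Your overall strategy is the same as the paper's: verify the Anno--Logvinenko/Meachan criteria that the cotwist $C$ (defined by the triangle $C \to \text{Id}_{D(R)} \to \Psi_R\Phi \to C[1]$) is an auto-equivalence and that it intertwines the two adjoints, using the adjoints $\Psi_R,\Psi_L$ from the preceding lemma and $\Psi_L \cong \Psi_R[n]$. However, your execution has a genuine gap at the central step. From $C(R/M_i) \cong R/M_{\sigma(i)}[-n-1]$ you conclude that $C$ is an equivalence ``because $\sigma$ is a permutation,'' but a triangulated functor can send a generating set of objects bijectively onto shifted copies of itself without being fully faithful; you would still have to show that $C$ induces isomorphisms on all $\text{Ext}^*(R/M_i,R/M_j)$, or argue that its bimodule kernel is invertible. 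The paper avoids this entirely by evaluating the cotwist on the generator $R$ of $D^b(\text{mod-}R)$ rather than on the simples: $\Psi_R\Phi(R) \cong R\text{Hom}(F_R,F_R)$, whose only cohomologies are $R$ in degree $0$ and $\text{Ext}^n(F_R,F_R) \cong \text{Hom}(F_R,F_R)^* \cong R^* \cong R$ in degree $n$, the last isomorphism being the Gorenstein self-duality lemma of this section ($R^*$ is free of rank one as a right $R$-module). Hence $C(R)$ is a shift of $R$ itself and the equivalence of $C$ is immediate. That self-duality $R^* \cong R$ is the ingredient your simples-only computation never invokes, and some substitute for it (or a full-faithfulness check) is required to close the argument.

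There is a second concrete error: the sheaves $F_i$ do \emph{not} generate $D^b(\text{coh}(X))$ --- they generate only the subcategory $\langle F_i\rangle$ (think of $\mathcal{O}_{C_i}(-1)$ on a threefold with exceptional curves $C_i$) --- so ``testing the compatibility of the adjoints on the generating objects $F_i$'' does not establish the required isomorphism of functors on all of $D^b(\text{coh}(X))$. The paper instead verifies the condition, in the form $\Psi_R \cong C\circ\Psi_L[1]$ of Anno--Logvinenko, by the computation $C(\Psi_L(F_R)) \cong C(R\text{Hom}(F_R,F_R))[n]$, using the shift behaviour of $C$ already obtained; your comparison map is also written with the adjoints in the wrong positions and with shifts that do not match this criterion, so the bookkeeping needs to be redone. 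Finally, a small point: the hypothesis in the spherical-object definition is $F\otimes\omega_X \cong F$, not $\omega_X \cong \mathcal{O}_X$; the identity $\Psi_L \cong \Psi_R[n]$ comes from this relative Calabi--Yau condition on $F$ (propagated to $F_R$), not from global triviality of the canonical bundle.
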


We note that, because $F_R$ is flat over $R$, the tensor product $R\text{Hom}(F_R,a) \otimes_R F_R$ 
is the same as the derived tensor product, and is bounded.

\begin{proof}
We have to check the following conditions (\cite{Ann-Log}, \cite{Meachan}):
\begin{itemize}
\item The cotwist functor $C$ defined by a distinguished triangle
\[
C \to \text{Id}_{D(R)} \to \Psi_R \circ \Phi \to C[1]
\]
is an auto-equivalence of $D^b(\text{mod-}R)$.

\item $\Psi_R \cong C \circ \Psi_L[1]$.
\end{itemize}

$D^b(\text{mod-}R)$ is spanned by $R$.
We have 
\[
\Psi_R(\Phi(R)) \cong \Psi_R(F_R) \cong R\text{Hom}(F_R,F_R).
\]
Therefore we have a distinguished triangle
\[
C(R) \to R \to R\text{Hom}(F_R,F_R) \to C(R)[1]
\]
hence $C(R) \cong R[-1-n]$, and $C$ is an auto-equivalence.

We have $C(\Psi_L(F_R)) \cong C(R\text{Hom}(F_R,F_R))[n]
\cong R\text{Hom}(F_R,F_R)[-1] \cong \Psi_R(F_R)[-1]$.
Thus we confirmed the conditions.
\end{proof}

\begin{Thm}
Let $\{F_i\}_{i=1}^r$ be a simple collection of coherent sheaves on a smooth projective variety $X$ of 
dimension $3$ such that $F \otimes \omega_X \cong F$ for $F = \bigoplus F_i$.
Assume that the versal $r$-pointed NC deformation $F_R$ is obtained by a finite sequence of iterated 
non-trivial $r$-pointed extensions. 
Assume moreover that $\text{Hom}(F_i,F_R) \ne 0$ for all $i$.
Then $(F_R,F)$ is relatively $3$-spherical over $R$.
\end{Thm}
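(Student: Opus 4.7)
The plan is to verify the two defining conditions for an $r$-pointed relative $3$-spherical object. Condition (2), $F \otimes \omega_X \cong F$, holds by hypothesis, and yields a permutation $\pi$ of $\{1,\dots,r\}$ with $F_i \otimes \omega_X \cong F_{\pi(i)}$. Condition (1) requires the computation of $\text{Ext}^p(F_R, F_i)$ for $p = 0, 1, 2, 3$ as right $R$-modules, which I take up in order.

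The two easy cases are $p = 0$ and $p = 1$. Since $F_R$ is the versal $r$-pointed NC deformation produced in Theorem~\ref{flat}, Lemma~\ref{no more hom} combined with the identification $R = \text{End}(F_R)$ gives $\text{Hom}(F_R, F_i) \cong R/M_i$ as a right $R$-module. Since $F_R$ is the final output of a \emph{finite} sequence of iterated non-trivial $r$-pointed extensions, the process terminates, which (by the corollary following Theorem~\ref{dominates}) is exactly the condition $\text{Ext}^1(F_R, F_i) = 0$ for every $i$.

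For $p = 3$, I apply Serre duality on the smooth projective $3$-fold $X$ and exploit the CY-type twist:
\[
\text{Ext}^3(F_R, F_i) \;\cong\; \text{Hom}(F_i, F_R \otimes \omega_X)^* \;\cong\; \text{Hom}(F_{\pi^{-1}(i)}, F_R)^*,
\]
which is nonzero by the hypothesis $\text{Hom}(F_j, F_R) \ne 0$ for all $j$. To upgrade this to the precise identification with $R/M_{\sigma(i)}$ for some permutation $\sigma$, I invoke Proposition~2.5: since $F_R \otimes \omega_X$ is itself a versal $r$-pointed NC deformation of $F$ (with components relabeled by $\pi$), there exist a ring automorphism $\tau$ of $R$ permuting idempotents via $\pi$ and an isomorphism $F_R \otimes \omega_X \cong F_R^{\tau}$. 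Transporting the known right $R$-module structure $\text{Hom}(F_R, F_i) \cong R/M_i$ through this twist identifies $\text{Ext}^3(F_R, F_i) \cong R/M_{\sigma(i)}$ for a permutation $\sigma$ determined by $\tau$ and $\pi$.

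The main obstacle is the $p = 2$ case. Serre duality reduces it to showing $\text{Ext}^1(F_j, F_R) = 0$ for every $j$, i.e., the ``dual'' vanishing. The forward maximality $\text{Ext}^1(F_R, F_i) = 0$ only Serre-dualizes to $\text{Ext}^2(F_j, F_R) = 0$, which is the wrong cohomological index. The plan is to combine the finiteness of the forward extension process with the hypothesis $\text{Hom}(F_i, F_R) \ne 0$ for all $i$---the latter forcing each $F_i$ to sit as a subobject of $F_R$---to produce a \emph{dual} versal iterated extension presentation of $F_R$ in which copies of $F_j$ are added on top rather than at the bottom. The isomorphism $F_R \otimes \omega_X \cong F_R^{\tau}$ supplies the required interchange of top and bottom via Serre duality, and the termination of the dual process is precisely $\text{Ext}^1(F_j, F_R) = 0$. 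Making this symmetry argument precise, and checking its compatibility with the right $R$-module structure, is the most delicate part of the argument.
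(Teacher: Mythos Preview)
Your treatment of $p=0$ and $p=1$ is fine and matches the paper. The gap is at $p=2$, and it propagates into $p=3$.

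For $p=2$ you reduce, via Serre duality and the twist $F_R\otimes\omega_X\cong F_R$, to showing $\text{Ext}^1(F_j,F_R)=0$. But your plan to obtain this---building a ``dual versal'' presentation of $F_R$ by adding copies of $F_j$ on top and arguing that the $\omega_X$-twist interchanges top and bottom---is not an argument, only a hope. Concretely: the forward maximality $\text{Ext}^1(F_R,F_j)=0$ Serre-dualizes to $\text{Ext}^2(F_j,F_R)=0$, and the vanishing you want, $\text{Ext}^1(F_j,F_R)=0$, Serre-dualizes back to $\text{Ext}^2(F_R,F_j)=0$. So the two statements are equivalent and the $\omega_X$-symmetry alone cannot break the circle; you need an independent input.

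The paper supplies that input by applying Serre duality to $F_R$ against \emph{itself} rather than against $F_i$: since $F_R\otimes\omega_X$ is again an iterated extension of the $F_j$, the vanishing $\text{Ext}^1(F_R,F_j)=0$ gives $\text{Ext}^1(F_R,F_R\otimes\omega_X)=0$, hence $\text{Ext}^2(F_R,F_R)=0$. Now use the hypothesis $\text{Hom}(F_i,F_R)\ne 0$: any nonzero map $F_i\to F_R$ is injective (compose with the projection to the top graded piece of the filtration containing its image), so there is an exact sequence $0\to F_i\to F_R\to G_i\to 0$ with $G_i$ an iterated extension of the $F_j$. The long exact sequence for $\text{Hom}(F_R,-)$ sandwiches $\text{Ext}^2(F_R,F_i)$ between $\text{Ext}^1(F_R,G_i)=0$ and $\text{Ext}^2(F_R,F_R)=0$, and you are done.

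Your $p=3$ argument also has a hole: you show $\text{Ext}^3(F_R,F_i)\cong\text{Hom}(F_{\pi^{-1}(i)},F_R)^*\ne 0$, but nowhere establish that $\dim\text{Hom}(F_i,F_R)=1$, which is what ``$\cong R/M_{\sigma(i)}$'' requires. The paper gets this by a dimension count: writing $m_i$ for the multiplicity of $F_i$ in $F_R$, one has
\[
\sum_i m_i=\dim\text{Hom}(F_R,F_R)=\dim\text{Ext}^3(F_R,F_R)=\sum_i m_i\dim\text{Hom}(F_i,F_R),
\]
and since each summand on the right is $\ge 1$ by hypothesis, equality forces $\dim\text{Hom}(F_i,F_R)=1$ for every $i$. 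Your route via Proposition~2.5 and a ring automorphism $\tau$ of $R$ could be made to work for the $R$-module structure, but only after this $1$-dimensionality is in hand.
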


We note that the last condition holds trivially if $r=1$. 

\begin{proof}
We have already $\text{Ext}^1(F_R,F_i) = 0$ for all $i$, because $F_R$ is versal.
Then we have $\text{Ext}^1(F_R,G) = 0$ for any extension $G$ of the $F_i$. 
We have an exact sequence 
\[
0 \to F_i \to F_R \to G_i \to 0
\]
for some $G_i$ for each $i$.
Thus
\[
\text{Ext}^1(F_R,G_i) \to \text{Ext}^2(F_R,F_i) \to \text{Ext}^2(F_R,F_R) 
\]
Since the last term is dual to $\text{Ext}^1(F_R,F_R)=0$, we conclude that 
$\text{Ext}^2(F_R,F_i) = 0$.  

Let $m_i$ be the number of appearances of $F_i$ in the iterated extension $F_R$.
Then
\[
\begin{split}
&\sum_i m_i = \dim \text{Hom}(F_R,F_R) = \dim \text{Ext}^3(F_R,F_R) \\
&= \sum_i m_i \dim \text{Ext}^3(F_R,F_i) = \sum_i m_i \dim \text{Hom}(F_i,F_R).
\end{split}
\]
Since $\text{Hom}(F_i,F_R) \ne 0$ for all $i$, it follows that 
$\dim \text{Hom}(F_i,F_R) = 1$ for all $i$.
Therefore we have $\dim \text{Ext}^3(F_R,F_i) = 1$ for all $i$, and we conclude the proof by the following lemma.
\end{proof}

\begin{Lem}
Let $F_R$ be an $r$-pointed NC deformation of a simple collection $\{F_i\}$ over $R \in (\text{Art}_r)$.
Assume that $\dim \text{Hom}(F_i,F_R) = 1$ for all $i$.
Then there exists a permutation $\sigma$ of $r$ elements such that 
$\text{Hom}(F_i,F_R) \cong R/M_{\sigma(i)}$ as left $R$-modules for all $i$. 
\end{Lem}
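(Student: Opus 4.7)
The plan is to use the orthogonal idempotents $e_1,\ldots,e_r \in R$ to decompose $F_R$ and read off $\sigma$ from this decomposition. Since $1 = \sum_j e_j$ with $e_je_k = \delta_{jk}e_j$, the endomorphisms $e_j\cdot : F_R \to F_R$ give a direct sum splitting $F_R = \bigoplus_{j=1}^r F_{R,j}$ in $\mathcal{A}$, where $F_{R,j} = e_j F_R$. Reducing modulo $M$ via $\phi: R/M \otimes_R F_R \cong F$ shows $F_{R,j}$ surjects onto $F_j$, hence is non-zero. Consequently $\text{Hom}(F_i,F_R) = \bigoplus_j \text{Hom}(F_i,F_{R,j})$, and the hypothesis $\dim \text{Hom}(F_i,F_R) = 1$ singles out a unique index $\sigma(i)$ with $\text{Hom}(F_i,F_{R,\sigma(i)}) \ne 0$ (in which case it equals the full $\text{Hom}(F_i,F_R)$).

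Next I would identify the left $R$-module structure. The idempotent $e_{\sigma(i)}$ acts as the identity on $\text{Hom}(F_i,F_R)$, since every non-zero morphism $F_i \to F_R$ factors through $F_{R,\sigma(i)}$, while every other $e_k$ acts as zero. A $1$-dimensional left $R$-module is described by an algebra character $\chi: R \to k$, and the above forces $\chi$ to be the composition $R \to R/M \cong k^r \xrightarrow{\pi_{\sigma(i)}} k$, whose kernel is $M_{\sigma(i)}$. Thus $\text{Hom}(F_i,F_R) \cong R/M_{\sigma(i)}$ as left $R$-modules.

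What remains, and is the main obstacle, is showing that $\sigma$ is a bijection; as a self-map of $\{1,\ldots,r\}$ it is enough to prove surjectivity. The subtlety is that $F_{R,j}$ is in general not an $R$-submodule of $F_R$ (the action of arbitrary $r \in R$ can shuffle between idempotent components), so one cannot take a socle of $F_{R,j}$ in the $R$-module sense. My approach is to apply the $M$-adic filtration of the ambient $F_R$ (an honest $R$-filtration) and intersect with $F_{R,j}$, obtaining $F_{R,j} \supseteq e_jMF_R \supseteq e_jM^2F_R \supseteq \ldots$, terminating at $0$ since $M$ is nilpotent. By flatness of $F_R$ over $R$,
\[
M^pF_R/M^{p+1}F_R \cong (M^p/M^{p+1}) \otimes_{R/M} F,
\]
so the $e_j$-component gives $e_jM^pF_R/e_jM^{p+1}F_R \cong \bigoplus_b (M^p/M^{p+1})_{jb} \otimes_k F_b$, where $(M^p/M^{p+1})_{jb} = e_j(M^p/M^{p+1})e_b$. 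Picking $p_j$ maximal with $e_jM^{p_j}F_R \ne 0$ (which exists because $F_{R,j} \ne 0$), the subobject $e_jM^{p_j}F_R$ itself equals $\bigoplus_b (M^{p_j}/M^{p_j+1})_{jb} \otimes_k F_b$; at least one summand is non-zero, yielding a subobject $F_b \hookrightarrow F_{R,j}$ and hence $\sigma(b) = j$.
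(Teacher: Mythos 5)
Your proof is correct and follows essentially the same route as the paper's: identify the one-dimensional left module $\text{Hom}(F_i,F_R)$ with $R/M_{\sigma(i)}$ via the idempotents, and deduce bijectivity of $\sigma$ from the fact that each component $F_{R,j}$ receives a nonzero morphism from some $F_b$. Your $M$-adic filtration/flatness argument just supplies the details behind the paper's one-line assertion of that last fact.
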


\begin{proof}
As left $R$-modules, we have 
$\text{Hom}(F_i,F_R) = R/M_j$ for some $j = j(i)$.
Then we have $\dim \text{Hom}(F_i,F_{R,k}) = \delta_{jk}$.
On the other hand, for each $k$, there is at least one $i$ such that $\text{Hom}(F_i,F_{R,k}) \ne 0$.
Therefore we have a one to one correspondence.
\end{proof}

We consider some examples.

\begin{Expl}
Let $f: X \to Y$ be a projective birational morphism from a smooth variety of dimension 3 to a normal variety
over $\Bbbk = \mathbf{C}$. 
We assume that  $K_X$ is relatively trivial and the exceptional locus of $f$ is $1$-dimensional.
In this case, $Y$ has only terminal Gorenstein singularities, 
and the irreducible components $C_i$ ($i = 1,\dots, r$) 
of the exceptional locus are smooth rational curves intersecting each other transversally, 
because $R^1f_*\mathcal{O}_X = 0$ by the vanishing theorem (\cite{KMM})
applied to an invertible sheaf $\mathcal{O}_X \cong \mathcal{O}_X(K_X)$.

Let $F_i = \mathcal{O}_{C_i}(-1)$.
Then $\{F_i\}_{r=1}^r$ is a simple collection.
But there are many other simple collections on $X$.
For example, for any disjoint subsets $I_j$ of the set of indexes $\{1,\dots,r\}$, 
if the $D_j = \bigcup_{i \in I_j} C_i$
are connected, then $\{\mathcal{O}_{D_j}\}$ is a simple collection.
Let $l_i$ be the length of $C_i$, i.e., the length of the scheme theoretic fiber over a singular point of $Y$
at the generic point of $C_i$,
and take an integer $k_i$ for each $i$ such that $0 <  k_i \le l_i$.
Then a collection consisting of fat curves $\{\mathcal{O}_{k_iC_i}\}$ is simple 
if $\text{End}(\mathcal{O}_{k_iC_i}) = \Bbbk$ for all $i$.
It is interesting to know whether these collections satisfy conditions of the above theorem 
yielding spherical objects, and what are their relationships.
\end{Expl}

\begin{Expl}
Let $Y$ be a hypersurface in $\Bbbk^4$ defined by an equation $xy - zw(z+w)=0$.
It has an isolated singularity at the origin.

We define a resolution of singularities $f: X \to Y$ in the following way.
$X$ is constructed by gluing three affine spaces $X = \bigcup_{i=1}^3 U_i$. 
$U_1$, $U_2$ and $U_3$ are isomorphic to $\mathbf{A}^3$ with coordinates $(x,z',w)$, $(x',z,w')$ and $(x'',y,z)$,
respectively, and $f$ is given by 
\[
\begin{split}
&U_1: \quad (x,y,z,w) \mapsto (x, z'w(xz'+w), xz',w) \\
&U_2: \quad (x,y,z,w) \mapsto (x'z,w'(z+x'w'),z,x'w') \\
&U_3: \quad (x,y,z,w) \mapsto (x''z(x''y -z),y,z,x''y -z) 
\end{split}
\]
where we considered $z'=z/x$, $x'=x/z$, $w'=w/x'=zw/x$ and $x''=x'/w=x/zw$.
$f$ is the composition of a blowing up along the ideal $(x,z)$ of a prime Weil divisor followed by another along $(x',w)$, the ideal corresponding
to the strict transform of a prime divisor defined by $(x,w)$.

\vskip 1pc

The exceptional locus of $f$ consists of two smooth rational curves $C_1 \cup C_2$
which intersect transversally.
$C_1$ is defined on $U_1$ by an ideal $(x,w)$, and on $U_2$ by $(z,w')$.
$C_2$ is defined on $U_2$ by an ideal $(x',z)$, and on $U_3$ by $(y,z)$. 

Let $F_i = \mathcal{O}_{C_i}(-1)$ for $i=1,2$.
Then $\{F_1,F_2\}$ is a simple collection.
We consider its NC deformations.

\vskip 1pc

The conormal bundles $N^*_{C_i/X}$ of the $C_i$ for $i=1,2$ are calculated as follows.
$C_1$ has coordinates $z'$ on $U_1$ and $x'$ on $U_2$, and they are related by $z' = (x')^{-1}$.
The generating sections of $N^*_{C_1/X}$ are transformed as $x \mapsto x'z$ and $w \mapsto x'w'$.
Therefore $N^*_{C_1/X} \cong \mathcal{O}_{\mathbf{P}^1}(1)^2$.

$C_2$ has coordinates $w'$ on $U_2$ and $x''$ on $U_3$, and they are related by $w' = (x'')^{-1}$.
The generating sections of $N^*_{C_2/X}$ are transformed as $x' \mapsto x''(x''y-z)$ and $z+w'x' \mapsto x''y$.
Therefore $N^*_{C_2/X} \cong \mathcal{O}_{\mathbf{P}^1}(1)^2$.

\vskip 1pc

The union $\Theta = C_1 \cup C_2$ is defined by ideals $(x,w)$ on $U_1$, $(x'w',z)$ on $U_2$, and $(y,z)$ on $U_3$. 
We denote by $\mathcal{O}_{\Theta}(a,b)$ an invertible sheaf on $\Theta$ whose restrictions to the $C_i$ for $i=1,2$
have degrees $a$ and $b$.
Let $G_1 \cong \mathcal{O}_{\Theta}(-1,0)$ and $G_2 \cong \mathcal{O}_{\Theta}(0,-1)$. 
Then we have non-trivial extensions
\[
0 \to F_{i'} \to G_i \to F_i \to 0
\]
where $i + i' = 3$.

\vskip 1pc

We calculate the normal bundle $N^*_{\Theta/X} = I_{\Theta}/I_{\Theta}^2$ of $\Theta$.
It is generated by linearly independent sections $s_j, t_j$ on $U_j$ which are defined as follows:
$s_1 = x$ and $t_1 = w + z'x$ on $U_1$, $s_2= z$ and $t_2 = z + x'w'$ on $U_2$, 
and $s_3= z$ and $t_3 = y$ on $U_3$.
We have $s_1 = x's_2$ and $t_1 = t_2$ on $U_1 \cap U_2$, 
and $s_2 = s_3$ and $t_2 = x''t_3$ on $U_2 \cap U_3$.
Therefore we have 
\[
N^*_{\Theta/X} \cong \mathcal{O}_{\Theta}(1,0) \oplus \mathcal{O}_{\Theta}(0,1). 
\]

Let $g: \Theta \to X$ be the embedding.
Since $\Theta$ is a locally complete intersection, we obtain the following by a Koszul resolution:
\[
\begin{split}
&g^*g_*\mathcal{O}_{\Theta} \cong \mathcal{O}_{\Theta} \oplus N^*_{\Theta/X}[1] 
\oplus \text{det}(N^*_{\Theta/X})[2] \\
&\cong \mathcal{O}_{\Theta} \oplus (\mathcal{O}_{\Theta}(1,0) \oplus \mathcal{O}_{\Theta}(0,1))[1] 
\oplus \mathcal{O}_{\Theta}(1,1)[2]
\end{split}
\]
where the direct sum decomposition to cohomologies is a consequence of the fact 
that $\dim \Theta = 1$.
Therefore we have 
\[
\begin{split}
&\text{Ext}^1_X(G_1,F_1) \cong \text{Hom}_{\Theta}
(\mathcal{O}_{\Theta}(0,0) \oplus \mathcal{O}_{\Theta}(-1,1),F_1) \cong \Bbbk \\
&\text{Ext}^1_X(G_2,F_2) \cong \text{Hom}_{\Theta}
(\mathcal{O}_{\Theta}(1,-1) \oplus \mathcal{O}_{\Theta}(0,0),F_2) \cong \Bbbk.
\end{split}
\]
Hence we have non-trivial extensions
\[
0 \to F_i \to F_{R,i} \to G_i \to 0
\]
for $i = 1,2$.

\vskip 1pc

The extension $F_{R,1}$ is induced from the surjection $\mathcal{O}_{\Theta}(-1,1) \to F_1$. 
Hence $F_{R,1}$ is an invertible sheaf of degrees $(-1,0)$ on 
a subscheme $\Theta_1$ of $X$ defined by ideals 
$(s_1,xt_1,wt_1) = (x,w^2)$ on $U_1$, $(s_2,zt_2,w't_2) = (z,x'(w')^2)$ on $U_2$, 
and $(s_3,t_3) = (z,y)$ on $U_3$.
$\Theta_1$ is not reduced along $C_1$, but is still locally complete intersection.

The extension $F_{R,2}$ is induced from the surjection $\mathcal{O}_{\Theta}(1,-1) \to F_2$. 
Hence $F_{R,2}$ is an invertible sheaf of degrees $(0,-1)$ on 
a subscheme $\Theta_2$ of $X$ defined by ideals 
$(s_1,t_1) = (x,w)$ on $U_1$, $(s_2x',s_2z,t_2) = (x'z,z^2,z+x'w') = (x'z,z+x'w')=(z+x'w',(x')^2w')$ on $U_2$, 
and $(s_3y,s_3z,t_3) = (z^2,y)$ on $U_3$.
$\Theta_2$ is not reduced along $C_2$, but is still locally complete intersection. 

\vskip 1pc

We calculate the conormal bundles $N^*_{\Theta_i/X} = I_{\Theta_i}/I_{\Theta_i}^2$ of the fat curves 
$\Theta_i$ for $i=1,2$.

$N^*_{\Theta_1/X}$
is generated by linearly independent sections $s'_j, t'_j$ on the $U_j$ which are defined as follows:
$s'_1 = x$ and $t'_1 = w^2 + xz'w$ on $U_1$, $s'_2= z$ and $t'_2 = x'(w')^2+zw'$ on $U_2$, 
and $s'_3= z$ and $t'_3 = y$ on $U_3$.
We have $s'_1 = x's'_2$ and $t'_1 = x't'_2$ on $U_1 \cap U_2$, 
and $s'_2 = s'_3$ and $t'_2 = t'_3$ on $U_2 \cap U_3$.
Therefore we have 
\[
N^*_{\Theta_1/X} \cong \mathcal{O}_{\Theta_1}(1,0) \oplus \mathcal{O}_{\Theta_1}(1,0). 
\]
$N^*_{\Theta_2/X}$
is generated by linearly independent sections $s''_j, t''_j$ on the $U_j$ which are defined as follows:
$s''_1 = x$ and $t''_1 = w + xz'$ on $U_1$, $s''_2=x'z$ and $t''_2 = x'w'+z$ on $U_2$, 
and $s''_3= x''yz-z^2$ and $t''_3 = y$ on $U_3$.
We have $s''_1 = s''_2$ and $t''_1 = t''_2$ on $U_1 \cap U_2$, 
and $s''_2 = x''s''_3$ and $t''_2 = x''t''_3$ on $U_2 \cap U_3$.
Therefore we have 
\[
N^*_{\Theta_2/X} \cong \mathcal{O}_{\Theta_2}(0,1) \oplus \mathcal{O}_{\Theta_2}(0,1). 
\]
Hence 
\[
\text{Ext}^1_X(F_{R,i},F_j) \cong \text{Hom}_{\Theta}
(\mathcal{O}_{\Theta}(0,0) \oplus \mathcal{O}_{\Theta}(0,0),F_j) \cong 0
\]
for all $i,j = 1,2$.
By the duality, we conclude that 
$F_{R,1} \oplus F_{R,2}$ is a versal $2$-pointed NC deformation of $F_1 \oplus F_2$.

\vskip 1pc

The deformation algebra $R$ has the following form
\[
\left( \begin{matrix} 
\Bbbk+\Bbbk t^2 & \Bbbk t \\
\Bbbk t & \Bbbk+\Bbbk t^2
\end{matrix} \right) \mod t^3.
\]
$F_R$ is a relative $3$-spherical object over $R$:
\[
R\text{Hom}(F_R,F_i) \cong R/M_i \oplus R/M_i[-3]. 
\]
\end{Expl}

\begin{Expl}
Let $Y \subset \Bbbk^4$ be a hypersurface of dimension $3$ defined by an equation
$x_1x_2+x_3^2+x_4^3=0$.
The blowing up at the origin gives a resolution of singularities
$f: X \to Y$ with an exceptional divisor $E$, 
which is a quadric cone over $\mathbf{P}^1$ that was considered in Example~\ref{quadric2}.
We use the notation $\mathcal{O}_E(a)$ defined there.
We have $K_X = f^*K_Y+E$, $\mathcal{O}_E(E) = \mathcal{O}_E(-2)$ and 
$K_E = \mathcal{O}_E(-4)$.

Let $e = \mathcal{O}_E(-2)$.
Then $e$ is an exceptional object in $D^b(\text{coh}(X))$.
Let $\mathcal{D}$ be its left orthogonal complement, and let $S$ be the Serre functor of $\mathcal{D}$.
Then $F = \mathcal{O}_E(-1)$ is an object in $\mathcal{D}$.
If $S'$ is the Serre functor of $D^b(\text{coh}(X))$, then we have 
$S'(F) \cong \mathcal{O}_E(-3)[3]$.
Let $j_*: \mathcal{D} \to D^b(\text{coh}(X))$ be the inclusion functor, and 
$j^!: D^b(\text{coh}(X)) \to \mathcal{D}$ its right adjoint functor.
Then we have $S \cong j^!S'j_*$.
Since $j^!e \cong 0$, we deduce that $S(F) \cong j^!\mathcal{O}_E(-3)[3] \cong \mathcal{O}_E(-1)[2]$. 
From an exact sequence
\[
0 \to \mathcal{O}_E(-3) \to e^{\oplus 2} \to \mathcal{O}_E(-1) \to 0
\]
we deduce that $S(F) \cong F[2]$.

We construct a non-trivial self extension $G$ of $F$ as in Example~\ref{quadric2}.
$G$ is a versal NC deformation of $F$ over $R = \Bbbk[t]/(t^2)$, and 
$G$ is a relative $2$-spherical object in $\mathcal{D}$:
\[
R\text{Hom}(G,F) \cong R/M \oplus R/M[-2]. 
\]
Indeed we will show that $\text{Ext}^1(G,F) \cong 0$ in the following.
Then we have $\text{Ext}^2(G,F) \cong \Bbbk$ by the duality.
Let $i: E \to X$ be the embedding.
Then we have $i^*i_*G \cong G \oplus G(-E)[1]$, 
where the direct sum decomposition to cohomologies is consequence of the fact 
that $\text{Ext}^2(G,G(-E)) \cong \text{Hom}(G(-E),G)^* \cong 0$ since $-E$ is ample.
Hence 
\[
\text{Ext}^1_X(i_*G,i_*F) \cong \text{Ext}^1_E(G,F) \oplus \text{Hom}_E(G(-E),F) \cong 0.
\] 
\end{Expl}

\begin{Rem}
The category $\mathcal{D}$ in the above example was already considered in \cite{Kawamata}~4.3.
The sheaf $G$ there appeared in \cite{Toda-contraction}~4.13.
The construction of tilting generators in \cite{VdBergh} can also be considered as a multi-pointed 
non-commutative deformation of a collection which is not simple.
See also \cite{Toda-Uehara}.
\end{Rem}

We have a similar example in dimension $4$, where we obtain again a relative $2$-spherical object.
A non-trivial permutation $\sigma$ of the indexes appears in this example:

\begin{Expl}\label{singular 4fold}
Let $Y \subset \Bbbk^5$ be a hypersurface defined by an equation
$x_1x_2+x_3x_4+x_5^3=0$.
The blowing up at the origin gives a resolution of singularities
$f: X \to Y$ with an exceptional divisor $E$, which is a cone over $\mathbf{P}^1 \times \mathbf{P}^1$
that was considered in Example~\ref{quadric3}.
We use the notation $\mathcal{O}_E(a,b)$ defined there.
We have $K_X = f^*K_Y+2E$, $\mathcal{O}_E(E) = \mathcal{O}_E(-1,-1)$ and 
$K_E = \mathcal{O}_E(-3,-3)$.

Let $e_1 = \mathcal{O}_E(-1,-1)$ and $e_2 = \mathcal{O}_E(-2,-2)$.
Then $(e_2,e_1)$ is an exceptional collection in $D^b(\text{coh}(X))$.
Let $\mathcal{D}$ be its left orthogonal complement, and let $S$ be the Serre functor of $\mathcal{D}$.
Then $F_1 = \mathcal{O}_E(-1,0)$ and $F_2 = \mathcal{O}_E(0,-1)$ are objects in $\mathcal{D}$.
If $S'$ is the Serre functor of $D^b(\text{coh}(X))$, then we have 
$S'(F_1) \cong \mathcal{O}_E(-3,-2)[4]$.
From exact sequences
\[
\begin{split}
&0 \to \mathcal{O}_E(-3,-2) \to e_2^{\oplus 2} \to \mathcal{O}_E(-1,-2) \to 0 \\ 
&0 \to \mathcal{O}_E(-1,-2) \to e_1^{\oplus 2} \to \mathcal{O}_E(-1,0) \to 0
\end{split}
\]
we deduce that $S(F_1) \cong F_1[2]$.
Similarly we have $S(F_2) \cong F_2[2]$.

We construct non-trivial self extensions $G_1$ and $G_2$ of $F_1$ and $F_2$ 
as in Example~\ref{quadric3}, respectively.
Then $G = G_1 \oplus G_2$ is a versal $2$-pointed NC deformation of $F = F_1 \oplus F_2$ 
over $R = \left( \begin{matrix} \Bbbk & \Bbbk t \\ \Bbbk t & \Bbbk \end{matrix} \right) \mod t^2$.
By the vanishing theorem, we have $H^p(E,\mathcal{O}_E(a,b)) = 0$ for $p > 0$ if $a,b \ge -2$, and
$G$ is a relative $2$-spherical object in $\mathcal{D}$:
\[
R\text{Hom}(G,F_i) \cong R/M_i \oplus R/M_{3-i}[-2]. 
\]
Indeed we will show that $\text{Ext}^1(G_j,F_k) \cong 0$ for $j,k = 1,2$ in the following.
Then we have $\text{Ext}^2(G_i,F_{3-i}) \cong \Bbbk$ by the duality.
Let $i: E \to X$ be the embedding.
Then we have $i^*i_*G_j \cong G_j \oplus G_j(-E)[1]$.
Hence 
\[
\text{Ext}^1_X(i_*G_j,i_*F_k) \cong \text{Ext}^1_E(G_j,F_k) \oplus \text{Hom}_E(G_j(-E),F_k) \cong 0. 
\]
\end{Expl}



Graduate School of Mathematical Sciences, University of Tokyo,
Komaba, Meguro, Tokyo, 153-8914, Japan 

kawamata@ms.u-tokyo.ac.jp

\end{document}